\DeclareSymbolFontAlphabet{\mathbb}{AMSb} %to ensure that the meaning of \mathbb does not change
\DeclareSymbolFontAlphabet{\mathbbl}{bbold}
\newtheorem{Satz}{Satz}[section]
\newtheorem{Lemma_french}[Satz]{Lemme}
\newtheorem{Corollary_french}[Satz]{Corollaire}
\theoremstyle{definition}
\newtheorem{Definition_french}{Définition}
\newtheorem{Remark_french}{Remarque}
\newtheorem{Theorem_french}[Satz]{Théorème}
\newtheorem{proposition}[Satz]{Proposition}
\newtheorem{Conjecture}[Satz]{Conjecture}
\newtheorem{Question}[Satz]{Question}
\DeclareMathOperator{\Aut}{Aut}
\DeclareMathOperator{\Gal}{Gal}
\DeclareMathOperator{\Spec}{Spec}
\DeclareMathOperator{\Spa}{Spa}
\DeclareMathOperator{\Spd}{Spd}
\DeclareMathOperator{\PGL}{PGL}
\DeclareMathOperator{\GL}{GL}
\DeclareMathOperator{\Br}{Br}
\DeclareMathOperator{\Proj}{Proj}
\DeclareMathOperator{\colim}{colim}
\DeclareMathOperator{\Div}{Div}
\DeclareMathOperator{\Pic}{Pic}
\DeclareMathOperator{\Mat}{Mat}
\DeclareMathOperator{\alg}{alg}
\DeclareMathOperator{\ad}{ad}
\DeclareMathOperator{\Fib}{Fib}
\DeclareMathOperator{\cd}{cd}
\DeclareMathOperator{\et}{\acute{e}t}
\begin{document}
\pagenumbering{gobble}
\begin{titlepage}
\begin{center}

\title{Sur la cohomologie étale de la courbe de Fargues-Fontaine}
\author{Sebastian Bartling}
\maketitle
\end{center}

Résumé: On analyse la cohomologie étale des faisceaux constructibles de torsion sur le site étale de la courbe de Fargues-Fontaine associée à un corps $p$-adique et un corps algébriquement clos, non-archimedién de caractéristique $p>0.$ Dans le cas de $\ell\neq p$-torsion, on démontre deux conjectures de Fargues: l'annulation en degrés supérieure ou égal à trois de la cohomologie étale des faisceaux constructibles sur la version algébrique de la courbe de Fargues-Fontaine et la comparaison avec la cohomologie étale de la version adique. Dans le cas $\ell=p$ les résultats sont conditionnels: sous la condition qu'une hypothèse sur la géométrie de la courbe adique est vérifiée, on démontre l'annulation de la cohomologie étale en degrés supérieure ou égal à trois de ces faisceaux Zariski-constructibles sur la courbe adique qui sont algébriques.

\tableofcontents
\end{titlepage}
\newpage
\pagenumbering{arabic}
\setcounter{page}{1}
\section{Introduction}
Soient $p$ un nombre premier fixé et $E$ une extension finie de $\mathbb{Q}_{p},$ $F$ un corps algébriquement clos de caractéristique $p>0,$ extension du corps résiduel de $E,$ complet pour une valeur absolue non-archimédienne non-triviale. Dans le livre \cite{FarguesFontaine} Fargues-Fontaine ont associé à la donnée de $E$,$F$ un objet géométrique qui a attiré un intérêt majeur dans les années récentes. Une des raisons (entre autres...) pour lesquelles cet objet suscite l'intérêt est que la cohomologie étale des système locaux sur cet objet offre une perspective géométrique sur la cohomologie galoisienne du corps $E.$ Cet objet est dénommé la courbe de Fargues-Fontaine et il admet une incarnation algébrique comme schéma noethérien, régulier de dimension $1$ sur $\Spec(E)$ - étudiée dans le livre de Fargues-Fontaine - et une incarnation analytique - étudiée par Kedlaya-Liu \cite{KedlayaLiuFoundations} - comme un espace adique quasi-compact, séparé de dimension $1$ sur $\Spa(E).$ Afin de montrer que cet objet se comporte comme une courbe du point de vue de la cohomologie étale, il faut plus généralement étudier la cohomologie étale des faisceaux constructibles. C'est l'objectif de cet article.
\\
Aujourd'hui il est plus facile d'introduire la courbe en utilisant le point de vue analytique: soient $\mathcal{O}_{F}$ l'anneau des entiers de $F$ et $\varpi\in F$ une pseudo-uniformisante fixée. Soit de plus $\mathcal{O}_{E}$ l'anneau des entiers du corps $E$ et $\pi$ une uniformisante dans $\mathcal{O}_{E}$ et $\mathbb{F}_{q}=\mathcal{O}_{E}/\pi$ le corps résiduel. On considère l'anneau des vecteurs de Witt ramifiés de $\mathcal{O}_{F}:$
$$
W_{\mathcal{O}_{E}}(\mathcal{O}_{F})=W(\mathcal{O}_{F})\otimes_{W(\mathbb{F}_{q})}\mathcal{O}_{E}.
$$
C'est un anneau topologique, complet pour la topologie $(\pi,[\varpi])$-adique.  Il a été démontré dans (\cite[Thm. 2.1.]{FarguesQuelquesresultats}) que l'espace pré-adique
$$
Y_{E,F}=\Spa(W_{\mathcal{O}_{E}}(\mathcal{O}_{F}))-V(\pi\cdot [\varpi])
$$
est bien un espace adique (i.e. le pré-faisceau structurel est un faisceau). On pense à $Y_{E,F}$ comme l'analogue de la boule unité ouverte épointée 'sur $F$' en caractéristique-mixte. L'espace $Y_{E,F}$ est l'espace de module des dé-basculements sur $E$ du corps $F$ (\cite[Prop. II.1.18]{FarguesScholze}). C'est un espace de Stein non-quasi-compact et afin de le rendre compact par quotient on considère l'automorphisme de Frobenius $\varphi\colon W_{\mathcal{O}_{E}}(\mathcal{O}_{F})\rightarrow W_{\mathcal{O}_{E}}(\mathcal{O}_{F})$ qui induit une action proprement discontinue du groupe discret $\varphi^{\mathbb{Z}}$ sur l'espace $Y_{E,F}.$ On peut donc passer au quotient et considérer l'espace adique
$$
X^{\text{ad}}_{E,F}=Y_{E,F}/\varphi^{\mathbb{Z}}.
$$
Cet espace s'appelle la courbe adique de Fargues-Fontaine associée à $E$ et $F.$ La raison principale pour laquelle on appelle cet espace (quasi-compact, partiellement propre sur $\Spa(E)$) une 'courbe' est que localement il est de la forme $\Spa(B_{I},B_{I}^{+}),$ où $B_{I}$ est une $E$-algèbre de Banach qui est un anneau principal (c.f. \cite[Cor. II.1.12] {FarguesScholze}). Cette courbe veut vraiment être considérée comme une 'courbe propre et lisse sur $\Spa(F)$', mais cette phrase n'a aucun sens a priori. \footnote{On peut en donner un sens approximatif en passant aux diamants: $(X^{\text{ad}}_{E,F})^{\lozenge}$ n'a pas de morphisme structurel vers $\Spa(F),$ mais son cousin $\Div^{1}_{E,F}=\Spd(E)\times \Spa(F)/\varphi_{\Spd(E)}^{\mathbb{Z}}$, qui vérifie $((X^{\text{ad}}_{E,F})^{\lozenge})_{\text{ét}}\simeq (\Div^{1}_{E,F})_{\text{ét}},$ a un tel morphisme structurel et $\Div^{1}_{E,F}\rightarrow \Spa(F)$ est un morphisme de diamants propre et lisse (\cite[Prop. II.1.21.]{FarguesScholze}).} Néanmoins, même si la courbe n'est pas de type fini sur $\Spa(E),$ elle vérifie des propriétés de finitude 'absolues' assez fortes: d'après un résultat de Kedlaya l'espace $X^{\text{ad}}_{E,F}$ est fortement noethérien (\cite[Thm. 4.10]{KedlayaNoetherian}): en tout cas, on peut bien travailler avec cet objet!
\\
Comme toute surface de Riemann admet une algébrisation, la courbe adique $X^{\text{ad}}_{E,F}$ en admet aussi une: l'idée est de trouver un fibré en droites ample sur $X^{\text{ad}}_{E,F},$ ce qui donne directement une algébrisation possible. On peut simplement définir le fibré en droites $\mathcal{O}_{X^{\text{ad}}_{E,F}}(1)$ en prenant le fibré en droites trivial $\mathcal{O}_{Y_{E,F}}$ sur $Y_{E,F}$ avec structure $\varphi$-équivariante donnée par $\varphi(f)=\pi^{-1}f.$ D'après le résultat de Kedlaya-Liu \cite[Thm. II.2.6]{FarguesScholze}, c'est bien un fibré en droites 'ample' et avec un peu de courage on peut donc considérer le schéma suivant:
$$
X^{\text{alg}}_{E,F}=\Proj(\bigoplus_{n\geq 0} H^{0}(X^{\text{ad}}_{E,F},\mathcal{O}_{X^{\text{ad}}_{E,F}}(1)^{\otimes n})).
$$
Un des résultats principaux du livre (\cite{FarguesFontaine}) de Fargues-Fontaine est ainsi que c'est un $E$-schéma noethérien, régulier de dimension $1,$ recouvert par deux spectres des anneaux principaux et les corps résiduels des points fermés sont algébriquement clos; leur basculement s'identifie canoniquement à $F$ (\cite[Thm. 6.5.2]{FarguesFontaine}). De plus, la théorie des fibrés vectoriels sur $X^{\text{alg}}_{E,F}$ est bien comprise (\cite[Thm. 8.2.10.]{FarguesFontaine}). Le lien entre la courbe adique et sa version algébrique est le suivant: on peut construire un morphisme entre d'espaces localement annelés
$$
X^{\text{ad}}_{E,F}\rightarrow X^{\text{alg}}_{E,F}
$$
qui induit une bijection entre les points fermés $|X^{\text{alg}}_{E,F}|$ et les points classiques $|X^{\text{ad}}_{E,F}|^{\text{cl}}$ et par tiré en arrière une équivalence de GAGA entre les catégories des fibrés vectoriels (\cite[Prop. II.2.7.]{FarguesScholze}).
\subsection{La cohomologie étale des systèmes locaux sur la courbe}
Dans l'article \cite{FarguesGtorseurs}, Fargues a commencé l'étude de la cohomologie étale de la courbe de Fargues-Fontaine. Il s'est concentré surtout sur les systèmes locaux sur la courbe algébrique et trouve un lien intéressant avec la théorie du corps de classes local de $E:$ il montre par exemple que la classe fondamentale de la théorie du corps de classes dans $H^{2}(E,\mu_{n})$ s'envoie vers la classe fondamentale de la courbe dans $H^{2}(X^{\text{alg}}_{E,F},\mu_{n})$ via tiré en arrière le long du morphisme structurel
$$
f\colon X^{\text{alg}}_{E,F}\rightarrow \Spec(E),
$$
c.f. \cite[Prop. 3.4]{FarguesGtorseurs} et il utilise l'identification $B(G)\simeq H^{1}(X^{\text{alg}}_{E,F},G),$ où $G$ est un groupe réductif, connexe sur $E$ (\cite[Thm. 5.1.]{FarguesGtorseurs}) pour donner une nouvelle preuve de la théorie du corps de classes local de $E$ (\cite[Thm. 2.6]{FarguesGtorseurs}). De plus, comme la courbe est géométriquement simplement connexe tous les $\mathbb{Z}/n\mathbb{Z}$-systèmes locaux $\mathcal{L}$ sont de la forme $\mathcal{L}=f^{*}(M),$ où $M$ est un $\mathbb{Z}/n\mathbb{Z}$-module fini avec action continue de $\Gal(\overline{E}/E).$ Cette correspondance identifie la cohomologie étale de $\mathcal{L}$ en degrés $i\in \lbrace 0, 1, 2\rbrace $  à la $\Gal(\overline{E}/E)$-cohomologie de $M$ (\cite[Thm. 3.7]{FarguesGtorseurs}) en degrés $i\in \lbrace 0, 1, 2\rbrace .$ Néanmoins on ne sait pas encore que
$$
H^{i}(X^{\text{alg}}_{E,F},\,\mathcal{L})=0,
$$
pour $i\geq 3.$
On peut donc constater que la courbe peut 'voir' l'arithmétique du corps $p$-adique $E$ et l'étude de la cohomologie étale des systèmes locaux sur la courbe donne un point de vue géométrique à l'étude de la cohomologie galoisienne du corps $E$ (c.f. par exemple le résultat \cite[Cor. 3.8.]{FarguesGtorseurs} qui donne une interprétation de la dualité de Tate-Nakayama comme dualité de Poincaré sur la courbe $X^{\text{alg}}_{E,F}$). Le cas des systèmes locaux compris on se tourne vers les faisceaux constructibles arbitraires.
\subsection{Les résultats}
J'explique maintenant plus en détail le contenu de cet article. Soit $\mathcal{F}$ un $\mathbb{Z}/n\mathbb{Z}$-module constructible sur le site étale de $X^{\alg}_{E,F}.$ Dans la section \ref{Subsection Vergleich einige Spezialfaelle} en bas, on montre comment on peut construire un morphisme entre sites étales
$$
u\colon (X^{\text{ad}}_{E,F})_{\text{ét}}\rightarrow (X^{\alg}_{E,F})_{\text{ét}}.
$$
On peut alors considérer le tiré en arrière $\mathcal{F}^{\text{ad}}=u^{*}(\mathcal{F}).$ La première conjecture de Fargues prédit une comparaison entre la cohomologie étale de $\mathcal{F}^{\text{ad}}$ et celle de $\mathcal{F}.$
\begin{Conjecture}{(Fargues, \cite[Conjecture 3.11.]{FarguesGtorseurs})}\label{Vermutung Vergleich}
Soit $\mathcal{F}$ un $\mathbb{Z}/n\mathbb{Z}$-module constructible sur le site étale de $X^{\alg}_{E,F},$ alors on a pour tout $i\geq 0,$
$$
H^{i}(X^{\text{alg}}_{E,F},\,\mathcal{F})\simeq H^{i}(X^{\text{ad}}_{E,F},\,\mathcal{F}^{\text{ad}}).
$$
\end{Conjecture}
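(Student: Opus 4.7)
L'approche que je propose repose sur un dévissage classique du faisceau constructible $\mathcal{F}.$ En le décomposant en parties $\ell$-primaires, on se ramène au cas où $\mathcal{F}$ est annulé par une puissance d'un seul premier $\ell.$ Par constructibilité, il existe un ouvert dense $j\colon U\hookrightarrow X^{\alg}_{E,F}$ de complémentaire fini $i\colon Z\hookrightarrow X^{\alg}_{E,F}$ tel que $\mathcal{L}:=\mathcal{F}|_{U}$ soit localement constant. Comme $u^{*}$ est exact et commute à $j_{!}$ et $i_{*}$ (base change le long d'immersions ouvertes et fermées), la suite exacte
$$
0\rightarrow j_{!}\mathcal{L}\rightarrow \mathcal{F}\rightarrow i_{*}(\mathcal{F}|_{Z})\rightarrow 0
$$
et son analogue adique, combinées au lemme des cinq appliqué aux suites exactes longues, ramènent la conjecture à traiter séparément le cas d'un faisceau à support fini et le cas de $j_{!}\mathcal{L}.$

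Pour $i_{*}(\mathcal{F}|_{Z}),$ les corps résiduels aux points fermés de $X^{\alg}_{E,F}$ sont algébriquement clos (\cite[Thm. 6.5.2]{FarguesFontaine}), donc la cohomologie est concentrée en degré $0$; comme $u$ induit une bijection entre points fermés et points classiques avec hensélisations strictes correspondantes, la comparaison est immédiate. Pour le cas de $j_{!}\mathcal{L},$ j'emploierais la suite exacte auxiliaire
$$
0\rightarrow j_{!}\mathcal{L}\rightarrow j_{*}\mathcal{L}\rightarrow i_{*}i^{*}j_{*}\mathcal{L}\rightarrow 0 :
$$
pour $\ell\neq p,$ les anneaux locaux strictement hensélisés aux points de $Z$ sont des anneaux de valuation discrète à corps résiduel algébriquement clos, donc $R^{q}j_{*}\mathcal{L}$ est concentré en $q\in \lbrace 0,1\rbrace$ et à support fini, ce qui permet de se ramener d'une part au cas ponctuel, d'autre part au cas des systèmes locaux sur toute la courbe, où la comparaison en degrés $\leq 2$ est déjà donnée par \cite[Thm. 3.7]{FarguesGtorseurs}.

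Le principal obstacle réside dans l'annulation en degrés $\geq 3,$ qui coïncide précisément avec l'autre conjecture énoncée dans le résumé: comparaison et annulation sont indissociables et devront être démontrées de front. Pour $\ell\neq p,$ on exploitera la structure de $X^{\alg}_{E,F}$ (recouverte par deux spectres d'anneaux principaux, de dimension cohomologique bornée) et un recouvrement analogue par deux ouverts de type Stein de $X^{\text{ad}}_{E,F},$ via une suite de Mayer-Vietoris, pour contrôler les deux cohomologies simultanément. Pour $\ell=p,$ les outils $\ell$-adiques classiques n'étant plus disponibles, il faudra recourir à des techniques $p$-adiques plus fines (cohomologie pro-étale, prismatique, etc.), ce qui explique le caractère conditionnel des résultats annoncés dans ce cas.
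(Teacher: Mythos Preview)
Votre dévissage initial est naturel, mais l'étape cruciale --- se ramener, via la suite $0\to j_{!}\mathcal{L}\to j_{*}\mathcal{L}\to i_{*}i^{*}j_{*}\mathcal{L}\to 0$, \emph{au cas des systèmes locaux sur toute la courbe} --- ne fonctionne pas telle quelle. Le faisceau $j_{*}\mathcal{L}$ n'est en général pas localement constant : sa fibre en $z\in Z$ est $\mathcal{L}_{\bar{\eta}}^{I_{z}}$, où $I_{z}$ est l'inertie au point $z$, et rien ne garantit que $I_{z}$ agisse trivialement. Vous obtenez donc un nouveau faisceau constructible, pas un système local, et le dévissage tourne en rond. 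De plus, même si $j_{*}\mathcal{L}$ était un système local, il faudrait encore justifier que $(j_{*}\mathcal{L})^{\text{ad}}\simeq j^{\text{ad}}_{*}(\mathcal{L}^{\text{ad}})$, ce qui est un énoncé de comparaison non trivial (changement de base pour l'immersion ouverte) que vous ne mentionnez pas.

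La démarche de l'article contourne précisément cet obstacle : au lieu de réduire aux systèmes locaux sur $X^{\text{alg}}_{E,F}$, on réduit (Lemme~\ref{Reduktion von allgemein konstru zu konstant verzweigte}) aux faisceaux de la forme $\pi_{*}\underline{M}$ où $\pi\colon X'\to X^{\text{alg}}_{E,F}$ est un \emph{revêtement ramifié} (normalisation d'un revêtement fini étale de $U$). On plonge $\mathcal{F}$ dans une somme $\pi_{*}\underline{M}\oplus(\text{gratte-ciels})$ et on itère. La comparaison pour ces faisceaux se ramène alors à comparer $H^{i}(X',\mathbb{F}_{\ell})$ et $H^{i}(X'^{\text{ad}},\mathbb{F}_{\ell})$. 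En degrés $0,1$ cela suit d'une équivalence GAGA pour les revêtements ramifiés (Prop.~\ref{GAGA verzweigte Sachen}); en degré $2$ l'article établit un analogue adique du théorème de Gabber identifiant groupe de Brauer et groupe de Brauer cohomologique (Prop.~\ref{Gabbers resultat fuer adische Raueme}), ingrédient que votre proposition ne mentionne pas. L'annulation en degrés $\geq 3$ est obtenue de façon bien plus précise que par une simple Mayer--Vietoris : côté adique via la présentation en diamant $(X^{\text{ad}}_{E,F})^{\lozenge}\simeq (\mathbb{B}^{1,\circ,*,\text{perf}}_{F}/\varphi_{F}^{\mathbb{Z}})/\underline{\mathbb{Z}_{p}}$, côté algébrique via la borne $\text{cd}_{\ell}(E(X^{\text{alg}}_{E,F}))\leq 1$ combinée à la pureté absolue.
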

La deuxième conjecture est une conjecture d'annulation:
\begin{Conjecture}{(Fargues, \cite[Conjecture 3.9.]{FarguesGtorseurs})}\label{Vermutung Verschwinden}
Soit $\mathcal{F}$ un $\mathbb{Z}/n\mathbb{Z}$-module constructible sur le site étale de $X^{\alg}_{E,F},$
alors on a
$$
H^{i}(X^{\text{alg}}_{E,F},\,\mathcal{F})=0,
$$
pour $i\geq 3.$
\end{Conjecture}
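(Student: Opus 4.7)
L'approche consiste à dévisser $\mathcal{F}$ par la stratification standard, à traiter séparément les strates ponctuelles et la partie localement constante sur l'ouvert dense, puis à utiliser le théorème de comparaison (Conjecture~\ref{Vermutung Vergleich}, démontré également dans cet article) pour ramener le problème à un calcul de dimension cohomologique sur le diamant $\Div^{1}_{E,F}.$

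\textbf{Étape 1 (Dévissage).} Pour tout $\mathbb{Z}/n\mathbb{Z}$-faisceau constructible $\mathcal{F}$ sur $X^{\alg}_{E,F},$ il existe un ouvert dense $j\colon U\hookrightarrow X^{\alg}_{E,F}$ tel que $j^{*}\mathcal{F}$ soit localement constant constructible, de complémentaire fermé $i\colon Z\hookrightarrow X^{\alg}_{E,F}$ porté par un ensemble fini de points fermés. La suite exacte
$$
0 \to j_{!} j^{*}\mathcal{F} \to \mathcal{F} \to i_{*} i^{*}\mathcal{F} \to 0
$$
ramène l'énoncé aux deux cas séparés (a) $\mathcal{F}=i_{x,*}M$ pour un point fermé $x$ et un $\mathbb{Z}/n\mathbb{Z}$-module fini $M,$ et (b) $\mathcal{F}=j_{!}\mathcal{L}$ pour $\mathcal{L}$ localement constant constructible sur un ouvert dense.

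\textbf{Étape 2 (Gratte-ciels).} D'après la structure des points fermés rappelée plus haut, tout corps résiduel $\kappa(x)$ est algébriquement clos, d'où
$$
H^{i}(X^{\alg}_{E,F},\,i_{x,*}M)=H^{i}(\Spec\kappa(x),\,M)=0
$$
pour tout $i\geq 1,$ a fortiori pour $i\geq 3.$

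\textbf{Étape 3 (Comparaison).} Pour le cas (b), la Conjecture~\ref{Vermutung Vergleich} donne
$$
H^{i}(X^{\alg}_{E,F},\,j_{!}\mathcal{L}) \simeq H^{i}(X^{\text{ad}}_{E,F},\,u^{*}(j_{!}\mathcal{L})),
$$
et on se ramène donc à l'annulation en degré $\geq 3$ sur la courbe adique.

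\textbf{Étape 4 (Annulation adique).} Le site étale de $X^{\text{ad}}_{E,F}$ s'identifie à celui du diamant $\Div^{1}_{E,F},$ qui, d'après Fargues-Scholze, admet un morphisme structurel $f\colon\Div^{1}_{E,F}\to\Spa(F)$ propre et lisse de dimension relative $1.$ Dans le cas $\ell\neq p,$ on combine le changement de base propre et lisse de Scholze avec une dévissage supplémentaire pour obtenir $R^{q}f_{*}u^{*}(j_{!}\mathcal{L})=0$ pour $q\geq 3$; comme $F$ est algébriquement clos, $\cd_{\ell}(\Spa F)=0,$ et la suite spectrale de Leray pour $f$ donne la conclusion.

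L'obstacle principal est précisément l'étape 4 dans le cas $\ell=p$: les bornes générales sur la dimension cohomologique à coefficients $p$-adiques des diamants propres lisses ne s'appliquent pas directement, ce qui explique que dans ce cas le théorème n'est démontré que conditionnellement, sous l'hypothèse géométrique supplémentaire sur $X^{\text{ad}}_{E,F}$ mentionnée dans le résumé. Un point technique secondaire, purement formel une fois la Conjecture~\ref{Vermutung Vergleich} établie, est la compatibilité de la dévissage avec $u^{*}$ et le passage aux diamants.
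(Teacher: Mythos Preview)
Your plan contains a genuine circularity. In Étape~3 you invoke the comparison isomorphism (Conjecture~\ref{Vermutung Vergleich}) to transfer the problem to the adic curve, saying it is ``démontré également dans cet article''. But look at how the paper actually proves comparison: Corollaire~\ref{Verschwinden impliziert Vergleich} establishes Conjecture~\ref{Vermutung Vergleich} \emph{under the hypothesis} that $H^{i}(X',\mathbb{F}_{\ell})=0$ for $i\geq 3$ and all ramified covers $X'\to X^{\text{alg}}_{E,F}$ --- precisely the vanishing you are trying to prove. The comparison in degrees $0,1,2$ is unconditional (Proposition~\ref{Vergleich in Graden 0,1,2 verzweigte Ueberdeckungen}, via GAGA and the adic Gabber theorem), but knowing $\Phi_{\mathcal{F}}$ is an isomorphism in low degrees together with vanishing of the \emph{target} in degree $\geq 3$ tells you nothing about the \emph{source} in degree $\geq 3$. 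So Étapes~3--4 as written do not close the loop.

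The paper avoids this by giving a \emph{direct} argument on the algebraic side (Proposition~\ref{l-kohomologische Dimension algebraische Kurve}): after the trace-method reduction to $H^{i}(X',\mathbb{F}_{\ell})$ for a ramified cover $X'$, one first bounds $\cd_{\ell}(E(X^{\text{alg}}_{E,F}))\leq 1$ (Proposition~\ref{Prop l dimension Funktionenkoerper}, proved via GAGA for Brauer groups of ramified covers and the bound $\cd_{\ell}(\kappa(x))\leq 1$ at maximal points of the adic curve), and then uses absolute purity to propagate vanishing from the generic point to all of $X'$. The adic bound $\cd_{\ell}(X^{\text{ad}}_{E,F})\leq 2$ (Proposition~\ref{l-torsion adische Kurve}) is proved separately, and only \emph{afterwards} are the two conjectures shown to be equivalent. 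Your diamond/proper-smooth idea is morally close to the paper's proof of the adic bound, but it is applied on the wrong side of the comparison.
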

Voici le résultat principal:
\begin{Theorem_french}\label{Hauptsatz}
Soit $\mathcal{F}$ un $\mathbb{Z}/n\mathbb{Z}$-module constructible sur le site étale de $X^{\alg}_{E,F}.$
\begin{enumerate}
\item[(a):] Si $n=\ell\neq p,$ les deux conjectures sont vraies.
\item[(b):] Si $n=p,$ alors on a
$$
H^{i}(X^{\text{ad}}_{E,F},\,\mathcal{F}^{\text{ad}})=0,
$$
pour $i\geq 3,$ si une hypothèse technique est satisfaite.
\end{enumerate}
De plus, sous cette hypothèse technique la conjecture \ref{Vermutung Verschwinden} est équivalente à la conjecture \ref{Vermutung Vergleich}.
\end{Theorem_french}
\begin{Remark_french}\label{Bemerkung zu pre-perfectoide Stein}
Soit $U$ un ouvert dans $X^{\text{alg}}_{E,F}$ avec adification $U^{\text{ad}}\subset X^{\text{ad}}_{E,F}.$ Alors l'hypothèse technique est la suivante: on suppose que l'on peut écrire
$$
U^{\text{ad}}=\bigcup_{n\in \mathbb{N}}U_{n},
$$
où $U_{n}\subset U^{\text{ad}}$ sont des ouverts affinoïdes, tels que $U_{n}\subseteq U_{n+1},$ l'image de l'application de restriction $\text{res}\colon \mathcal{O}(U_{n+1})\rightarrow \mathcal{O}(U_{n})$ est dense; si on note par $E_{\infty}$ la $\mathbb{Z}_{p}$-extension cyclotomique de $E,$ les $$U_{n}\times_{\Spa(E)}\Spa(\widehat{E}_{\infty})$$ sont affinoïdes perfectoïdes. Je vais appeler un tel espace adique un espace pré-perfectoïde Stein.\footnote{L'adjectif 'pré-perfectoïde' est là seulement pour une raison peu sympathique: on ne sait pas démontrer qu'un ouvert affinoïde dans un espace perfectoïde est affinoïde perfectoïde...}. Dans la remarque \ref{Bermerkung Diskussion Hypothese Stein} on trouve une petite discussion de cette hypothèse et une raison pour laquelle je pense que cette hypothèse devrait être vraie.
\end{Remark_french}
\begin{Remark_french}
La question si tout faisceau Zariski-constructible $\mathcal{G}$ sur le site étale de $X^{\ad}_{E,F}$ est de la forme $\mathcal{F}^{\ad}$ pour un faisceau constructible $\mathcal{F}$ sur le site étale de $X^{\alg}_{E,F}$ semble intéressante. Pour répondre à cette question on pourrait essayer de résoudre le problème d'extension de Riemann (comme dans \cite{LutkebohmertRiemannExtension}) pour la courbe adique.
\end{Remark_french}
\subsubsection{Le cas $\ell\neq p$}
Maintenant je donne plus de détails sur la preuve du point (a) ci-dessus; i.e. la preuve dans le cas plus facile de $\ell\neq p$-torsion. L'idée est d'expliquer d'abord pourquoi la conjecture \ref{Vermutung Vergleich} est équivalente à la conjecture \ref{Vermutung Verschwinden}. On suppose pour un moment que la conjecture \ref{Vermutung Verschwinden} soit vérifiée. La première observation (Lemme \ref{Reduktion von allgemein konstru zu konstant verzweigte}) - observation plus au moins standard dans la cohomologie étale - est qu'il suffit de démontrer que
$$
H^{i}(X^{\prime},\,\mathbb{F}_{\ell})\simeq H^{i}(X^{\prime \text{ad}},\,\mathbb{F}_{\ell}),
$$
pour tout revêtement ramifié $X^{\prime}\rightarrow X^{\text{alg}}_{E,F}$ (c.f. Déf. \ref{Def verzweigte Ueberdeckung} pour cette notion). Pour vérifier la comparaison en degrés $i=0,1,$ on utilise l'équivalence de GAGA pour ces revêtements ramifiés (Lemme \ref{GAGA verzweigte Sachen}). Cette équivalence de GAGA pour des revêtements ramifiés est déduite de l'équivalence de GAGA connue pour la courbe en utilisant le fait que le morphisme $X^{\prime}\rightarrow X^{\text{alg}}_{E,F}$ et le morphisme $X^{\prime \text{ad}}\rightarrow X^{\text{ad}}_{E,F}$ sont localement libres. Pour la comparaison en degré $i=2$ on démontre un analogue adique d'un résultat de Gabber, qui compare les groupe de Brauer cohomologique et cohérent d'un espace adique, fortement noethérien sur un corps de caractéristique $0,$ qui est la réunion séparée des deux affinoïdes (Lemme \ref{Gabbers resultat fuer adische Raueme}). De fait, dans cette étape on traite dans un premier temps le cas d'un espace affinoïde (Lemme \ref{Gabbers theorem affinoide adische Raeume}) en utilisant un théorème de Huber qui compare la cohomologie étale d'un affinoïde $\Spa(A,A^{+})$ à celle de $\Spec(A)$ et le résultat de Gabber qui identifie le groupe de Brauer au groupe de Brauer cohomologique pour tout schéma affine. Ensuite on utilise des faisceaux tordus localement libres sur des gerbes pour l'étape de recollement: ici je me suis fortement inspiré de la thèse de Lieblich \cite{LieblichPhD}.
\\
Enfin, il en résulte que la conjecture \ref{Vermutung Verschwinden} implique la conjecture \ref{Vermutung Vergleich}.\footnote{De fait, tous ces arguments marchent aussi dans le cas $\ell=p,$ si on savait que $H^{i}(X^{\prime},\mathbb{F}_{p})=0$ pour $i\geq 3$ et tout revêtement ramifié.} 
\\
Après on démontre directement que $\cd_{\ell}(X^{\text{ad}}_{E,F})\leq 2$ (Prop. \ref{l-torsion adische Kurve}). Cette borne suit facilement de la présentation du diamant associé à $X^{\text{ad}}_{E,F}$ suivante:
$$
(X^{\text{ad}}_{E,F})^{\lozenge}\simeq (\mathbb{B}^{1,\circ,*,\text{perf}}_{F}/\varphi_{F}^{\mathbb{Z}})/\underline{\mathbb{Z}_{p}}.
$$
En particulier, il est évident que la conjecture \ref{Vermutung Vergleich} implique la conjecture \ref{Vermutung Verschwinden} (au moins si $\ell\neq p$). On donne après une preuve directe du fait que $\cd_{\ell}(X^{\text{alg}}_{E,F})\leq 2$ (Prop \ref{l-kohomologische Dimension algebraische Kurve}), $\ell\neq p.$ Pour la preuve, on vérifie d'abord que $\cd_{\ell}(E(X^{\text{alg}}_{E,F}))\leq 1$ (Prop. \ref{Prop l dimension Funktionenkoerper}), où $E(X^{\text{alg}}_{E,F})$ est le corps de fonctions de la courbe $X^{\text{alg}}_{E,F}.$ Après la borne sur la dimension cohomologique de la courbe algébrique est simplement une application facile soit de la pureté absolue soit du fait que les corps résiduels de tous les points fermés des revêtements ramifiés $X^{\prime}\rightarrow X^{\text{alg}}_{E,F}$ sont algébriquement clos.
\subsubsection{Le cas $\ell=p$}
J'explique la preuve (donnée dans la proposition \ref{Prop Fall l=p}) pour $\ell=p$ de mon résultat principal; dans ce cas là mes résultats sont plutôt faibles c.f. Théorème \ref{Hauptsatz}(b): on utilise d'abord la méthode de la trace pour ramener l'énoncé
$$
H^{i}(X^{\text{ad}}_{E,F},\,\mathcal{F}^{\text{ad}})=0,
$$
où $\mathcal{F}$ est un $\mathbb{F}_{p}$-module constructible sur $X^{\text{alg}}_{E,F}$ et $i\geq 3,$ à
$$
H^{i}(X^{\prime},\,j^{\prime \text{ad}}_{!}(\mathbb{F}_{p}))=0,
$$
où
$i\geq 3,$ $\pi\colon X^{\prime}\rightarrow X^{\text{alg}}_{E,F}$ est un revêtement ramifié de degré premier à $p,$ normalisation d'un revêtement étale $V\rightarrow U\subseteq X^{\text{alg}}_{E,F}.$ On note $j^{\prime}\colon V\rightarrow X^{\prime}$ et $i^{\prime}\colon Z^{\prime}\rightarrow X^{\prime}$ l'inclusion du fermé complémentaire. Après passage au monde adique, on peut résumer la situation dans un diagramme commutatif 
$$
\xymatrix{
V^{\text{ad}} \ar[r]^{j^{\prime ^{\text{ad}}}} \ar[d]^{\dot{\pi}^{\text{ad}}} & X^{\prime ^{\text{ad}}} \ar[d]^{\pi^{\text{ad}}} &  \ar[l]^{i^{\prime ^{\text{ad}}}}Z^{\prime ^{\text{ad}}} \ar[d] \\
U \ar[r]^{j^{\text{ad}}} & X^{\text{ad}}_{E,F} & \ar[l]^{i^{\text{ad}}} Z^{\text{ad}}.
}
$$
Si l'hypothèse technique dans la remarque \ref{Bemerkung zu pre-perfectoide Stein} est vérifiée, l'ouvert adique $U^{\text{ad}}$ est un espace pré-perfectoïde Stein et on en déduit $H^{i}(V^{\text{ad}},\,\mathbb{F}_{p})=0,$ pour $i\geq 3.$ Maintenant, on analyse la cohomologie du faisceau $j^{\prime \text{ad}}_{!}\mathbb{F}_{p}$ en utilisant le triangle exact suivant:
$$
\xymatrix{
(j^{\prime \text{ad}})_{!}\mathbb{F}_{p} \ar[r] & Rj^{\prime \text{ad}}_{*}\mathbb{F}_{p} \ar[r] & i^{\prime \text{ad}}_{*} i^{\prime \text{ad} *}Rj^{\prime \text{ad}}_{*} \mathbb{F}_{p} \ar[r]^-{+1} &.
}
$$
Il faut donc démontrer que le complexe
$$
i^{\prime \text{ad}}_{*} i^{\prime \text{ad} *}Rj^{\prime \text{ad}}_{*} \mathbb{F}_{p}
$$
n'a pas de cohomologie en degrés $i\geq 2.$ On analyse ici la situation en utilisant le $\underline{\mathbb{Z}_{p}}$-recouvrement pro-étale perfectoïde $X^{\prime \text{ad} \lozenge}\times_{\Spd(E)}\Spd(\widehat{E}_{\infty})\rightarrow X^{\prime \text{ad} \lozenge},$ où $E_{\infty}$ est la $\mathbb{Z}_{p}$-extension cyclotomique du corps $E.$
\\
La formule suivante est vérifiée:
$$
R\Gamma(Z^{\prime \text{ad}},\,i^{\prime \text{ad} *}Rj^{\prime \text{ad}}_{*}\mathbb{F}_{p}) = \underset{W^{\prime}}\colim R\Gamma(W^{\prime}-Z^{\prime \text{ad}},\, \mathbb{F}_{p})=R\Gamma(\mathbb{Z}_{p},\underset{\widetilde{W}^{\prime}}\colim R\Gamma(\widetilde{W}^{\prime}-(Z^{\prime \text{ad}})_{\infty},\,\mathbb{F}_{p})),
$$
où la première colimite porte sur les voisinages ouverts, qcqs $W^{\prime}$ de $Z^{\prime \text{ad}}$ et la deuxième sur les voisinages ouverts qcqs $\underline{\mathbb{Z}_{p}}$-invariants $\widetilde{W}^{\prime}\subset (X^{\prime \text{ad} \lozenge})_{\infty}$ de $(Z^{\prime \text{ad}})_{\infty}.$ 
\\
Le fait que $\cd_{p}(\mathbb{Z}_{p})=1$ pose le problème principal; le point clé pour répondre à cette difficulté est le suivant: l'extension $E_{\infty}$ de $E$ est de la forme $E_{\infty}=\bigcup_{n}E_{n},$ où les $E_{n}$ sont des extensions galoisiennes avec groupe de Galois $G_{n}=\Gal(E_{n}/E)\simeq \mathbb{Z}/p^{n}\mathbb{Z}$. En utilisant un système de voisinages fondamentaux $\underline{\mathbb{Z}}_{p}$-invariants de $Z^{\prime \text{ad}}_{\infty}$ bien choisi, on construit un isomorphisme entre $\mathbb{Z}_{p}=\lim_{n}G_{n}$-représentations 
$$
\underset{\widetilde{W}^{\prime}} \colim H^{j}(\widetilde{W}^{\prime}-(Z^{\prime \text{ad}})_{\infty},\,\mathbb{F}_{p})=\underset{n\in \mathbb{N}} \colim \text{ } \text{Ind}_{\lbrace e \rbrace}^{G_{n}}(M_{n}^{(j)}),
$$
où les $M_{n}^{(j)}$ sont des $\mathbb{F}_{p}$-modules discrets, munis de l'action triviale de $G_{n}:$ ils dépendent de $j$ et forment un système direct, i.e. on a une application $M_{n}^{(j)}\rightarrow M_{n+1}^{(j)}.$ Après on vérifie (c.f. Lemme \ref{Lemma zum Verschwinden der Gruppenkohomologie}) qu'une représentation du groupe $\mathbb{Z}_{p}$ de la forme 
$$
\underset{n\in \mathbb{N}} \colim \text{ } \text{Ind}_{\lbrace e \rbrace}^{G_{n}}(M_{n}^{(j)})
$$
n'a pas de $\mathbb{Z}_{p}$-cohomologie. Comme l'inclusion $U^{\text{ad}}\hookrightarrow X^{\text{ad}}_{E,F}$ est évidemment 'localement de Stein', on peut en déduire l'énoncé d'annulation suivant
$$
\underset{\widetilde{W}^{\prime}}  \colim H^{i}(\widetilde{W}^{\prime}-(Z^{\prime \text{ad}})_{\infty},\,\mathbb{F}_{p})=0,
$$
pour $i\geq 2$ (c.f. lemme \ref{Lemma Verschwinden lokal Stein}). Cela permet de terminer la preuve de la proposition \ref{Prop Fall l=p}.
\begin{Remark_french}
Il serait intéressant d'analyser aussi le faisceau $R^{1}j^{\prime \ad}_{*}\mathbb{F}_{p}$ afin d'établir la pureté absolue dans cette situation.
\end{Remark_french}
\subsection{Aperçu du contenu}
Voici un bref aperçu du contenu des sections. 
La première partie, section \ref{Section Einige Resultate Vergleich}, concerne le problème de comparer la cohomologie étale de la courbe algébrique avec celle de la courbe adique. Dans la sous-section \ref{Subsection Vergleich einige Spezialfaelle} un morphisme  entre les sites étales
$$
u\colon (X^{\text{ad}}_{E,F})_{\text{ét}}\rightarrow (X^{\alg}_{E,F})_{\text{ét}}
$$
est construit et une première réduction, lemme \ref{Reduktion von allgemein konstru zu konstant verzweigte}, du problème de comparaison à un problème plus concret est donné. Dans la sous-section \ref{subsection GAGA fuer verzweigte Sachen} l'équivalence de GAGA pour tout revêtement ramifié de la courbe algébrique est démontré. Après, dans la sous-section \ref{subsection Verschwinden impliziert Vergleich}, on explique comment l'annulation implique la comparaison. Le plan est le suivant: dans un premier temps, sous-sous-section \ref{subusbsection Widerholungen zu Brauergruppe und getwisteten Garben}, on donne quelques rappels sur les groupes de Brauer et les faisceaux tordus dans le monde adique. Après, dans sous-sous-section \ref{subsubsection Gabbers satz fuer adische Raueme}, j'explique la preuve de l'analogue du résultat de Gabber concernant le groupe de Brauer et le groupe de Brauer cohomologique pour les espaces adiques. Ces résultats sont appliqués dans le sous-sous-section \ref{subsubsection Verschwinden impliziert Vergleich Beweise} pour démontrer que si la conjecture d'annulation, conjecture \ref{Vermutung Verschwinden}, est vérifiée la conjecture de comparaison, conjecture \ref{Vermutung Vergleich}, en résulte.
\\
La deuxième partie, section \ref{section Einige resultate Verschwinden}, est consacrée à quelques résultats d'annulation de la cohomologie étale de la courbe adique et algébrique. Dans la sous-section \ref{subsection ell koho Dim der adischen Kurve} on démontre que l'on a $\cd_{\ell}(X^{\text{ad}}_{E,F})\leq 2,$ où $\ell\neq p$ est un nombre premier. Après, sous-section \ref{subsection ell koho dim des Funktionenkoerpers}, on démontre une borne sur la $\ell$-dimension cohomologique du corps de fonctions de la courbe algébrique. Ce résultat est utilisé dans la sous-section \ref{subsection ell koho dim der algebraischen Kurve} pour borner la $\ell$-dimension cohomologique de la courbe algébrique par $2.$ Dans la sous-section \ref{subsection der Fall l=p} j'explique ce que je sais faire dans le cas $\ell=p.$ Dans la sous-section finale \ref{subsection Quatsch zu nicht-Zariski konstru} j'analyse un peu le cas d'un $\mathbb{F}_{p}$-module constructible sur la courbe adique qui n'est pas Zariski-constructible.
\subsection{Remerciements}
Il y a beaucoup de mathématiciens et mathématiciennes que j'ai le plaisir de remercier: tout d'abord mon directeur de thèse Laurent Fargues, qui m'a dit que je devrais réfléchir sur la cohomologie étale de la courbe. Merci beaucoup de m'avoir donné la possibilité de réfléchir sur la courbe, pour des discussions et le partage généreux de votre intuition profonde. De plus, tout le long de mon travail j'ai eu le plaisir de discuter très régulièrement avec Benoît Stroh et Arthur-César le Bras; sans leur intérêt et soutien cet article n'existerait pas et leur remarques ont influencé ce travail. J'aimerais remercier Johannes Anschütz pour des commentaires sur une version préliminaires, son intérêt et pour des suggestions utiles. Mes remerciements vont également à Lennart Gehrmann pour des discussion concernant la cohomologie des groupes et Guido Bosco pour des discussion sur des espaces de Stein.
\\
Finalement, je veux remercier Sophie Morel chaleureusement pour tout le travail qu'elle a fait en tant que rapporteuse de thèse; grâce à sa relecture détaillée et ses remarques utiles les maths sont devenus plus vraies et la présentation plus claire.
\\
L'auteur a était financé par l'ERC Advanced Grant 742608 GeoLocLang.
\newpage
\section{Quelques résultats concernant la comparaison}\label{Section Einige Resultate Vergleich}
\subsection{Énoncé et première réduction}\label{Subsection Vergleich einige Spezialfaelle}
L'objectif ici est d'abord d'introduire un morphisme entre les sites étales
$$
u\colon (X^{\text{ad}}_{E,F})_{\text{ét}}\rightarrow (X^{\alg}_{E,F})_{\text{ét}},
$$
ce qui correspond à un foncteur $u\colon \text{Ét}(X^{\alg}_{E,F})\rightarrow \text{Ét}(X^{\text{ad}}_{E,F});$ une fois ce foncteur construit, on peut énoncer la conjecture \ref{Vermutung Vergleich} de comparaison ci-dessus. Ensuite, on explique comment réduire la comparaison entre la cohomologie étale de $X^{\alg}_{E,F}$ et $X^{\text{ad}}_{E,F}$ avec des coefficients constructibles arbitraires au problème de comparer la cohomologie étale des revêtements ramifiés de $X^{\text{alg}}_{E,F}$ avec celle du pendant adique, c.f. le lemme \ref{Reduktion von allgemein konstru zu konstant verzweigte}.
\\
La construction du foncteur $u\colon \text{Ét}(X^{\alg}_{E,F})\rightarrow \text{Ét}(X^{\text{ad}}_{E,F})$ utilise de façon cruciale un résultat de Kedlaya \cite[Thm. 4.10]{KedlayaNoetherian} qui assure que l'espace adique sous-perfectoïde $X^{\text{ad}}_{E,F}$ est fortement noethérien: en utilisant l'interprétation comme espace de module du $\text{Proj}$ dans la catégorie des espaces localement annelés (c.f. \cite[Prop. II.2.7.]{FarguesScholze}), on construit un morphisme 
$$
X^{\text{ad}}_{E,F}\rightarrow X^{\text{alg}}_{E,F}.
$$
Puis on peut appliquer un résultat de Huber \cite[Proposition 3.8]{HuberFormalScheme}, pour construire pour tout morphisme étale $U\rightarrow X^{\text{alg}}_{E,F}$ un espace adique fortement noethérien $U^{\text{ad}}$ qui vit dans un diagramme commutatif
$$
\xymatrix{
U^{\text{ad}} \ar[d] \ar[r] & U \ar[d] \\
X^{\text{ad}}_{E,F} \ar[r] & X^{\text{alg}}_{E,F}.
}
$$
L'espace $U^{\text{ad}}$ est localement fortement noethérien et uniquement caractérisé par une propriété universelle appropriée c.f. loc.cit.
\\
On observe que le morphisme d'espaces adiques $U^{\text{ad}}\rightarrow X^{\text{ad}}_{E,F}$ est toujours étale (c.f. \cite[Corollary 1.7.3]{HuberBuch} pour l'énoncé qu'on peut utiliser après on a utilisé que la propriété d'être étale est localement sur la cible), ce qui définit un foncteur $u\colon \text{Ét}(X^{\alg}_{E,F})\rightarrow \text{Ét}(X^{\text{ad}}_{E,F}).$ De plus, ce foncteur envoie des recouvrements vers des recouvrements et il commute aux limites finies: cela suffit pour construire le morphisme $u\colon (X^{\text{ad}}_{E,F})_{\text{ét}}\rightarrow (X^{\alg}_{E,F})_{\text{ét}}.$
\\
Soit $\mathcal{F}$ un $\mathbb{Z}/n\mathbb{Z}$-module constructible sur $(X^{\alg}_{E,F})_{\text{ét}}.$ On peut appliquer le foncteur dérivé $R\Gamma(X^{\alg}_{E,F},.)$ au morphisme d'adjonction
$$
\mathcal{F}\rightarrow Ru_{*}Ru^{*}\mathcal{F}=Ru_{*}u^{*}\mathcal{F}
$$
pour construire un morphisme
$$
\Phi_{\mathcal{F}}\colon R\Gamma(X^{\alg}_{E,F},\, \mathcal{F})\rightarrow R\Gamma(X^{\text{ad}}_{E,F},\,\mathcal{F}^{\text{ad}}),
$$
où $\mathcal{F}^{\text{ad}}=u^{*}\mathcal{F}.$ Fargues a posé la conjecture \cite[Conjecture 3.11]{FarguesGtorseurs} que $\Phi_{\mathcal{F}}$ soit toujours un isomorphisme. 
\\
Maintenant on explique comment réduire le problème de démontrer que $\Phi_{\mathcal{F}}$ est un isomorphisme en un problème plus concret:
\begin{Lemma_french}\label{Reduktion von allgemein konstru zu konstant verzweigte}
Si $\Phi_{\mathcal{F}}$ est un isomorphisme pour tout faisceau constructible de la forme $\pi_{*}(M),$ où $\pi\colon X^{\prime}\rightarrow X^{\text{alg}}_{E,F}$ est un morphisme fini plat qui est fini étale au-dessus d'un ouvert dense, où $X^{\prime}$ est un schéma noethérien, régulier de dimension $1,$ $M$ est un $\mathbb{Z}/n\mathbb{Z}$-module fini, alors $\Phi_{.}$ est un isomorphisme pour tout faisceau constructible.
\end{Lemma_french}
Comme les objets $\pi\colon X^{\prime}\rightarrow X^{\text{alg}}_{E,F}$ jouent un rôle essentiel pour la suite, on leur donne un nom:
\begin{Definition_french}\label{Def verzweigte Ueberdeckung}
On appelle revêtement ramifié de $X^{\text{alg}}_{E,F}$ une paire $(X^{\prime},\pi),$ où $X^{\prime}$ est un schéma noethérien, régulier de dimension $1$ et $\pi\colon X^{\prime}\rightarrow X^{\text{alg}}_{E,F}$ est un morphisme fini, plat, qui est fini étale au-dessus d'un ouvert dense de $X^{\text{alg}}_{E,F}.$
\end{Definition_french}
Voici la preuve du lemme \ref{Reduktion von allgemein konstru zu konstant verzweigte} ci-dessus:
\begin{proof}
La preuve est simplement une modification de la preuve de \cite[Tag 09Z6]{stacks} (un résultat standard dans la théorie de la cohomologie étale pour les schémas). Pour le lecteur impatient on peut déjà dire que le point clé est que la courbe schématique (en caractéristique 0!) admet un recouvrement par les spectres des anneaux de Nagata; ce qui implique que la normalisation reste bien finie. 
\\
Voici les détails: soit $\mathcal{F}$ un $\mathbb{Z}/n\mathbb{Z}$-module constructible sur $X^{\text{alg}}_{E,F}.$ D'abord, on peut trouver un ouvert dense $j\colon U\subseteq X^{\text{alg}}_{E,F},$ tel que $\mathcal{F}\!\mid_{U}=:\mathcal{L}$ est un système local.\footnote{Soit $X=\coprod_{i\in I}X_{i}$ une stratification constructible, tel que $\mathcal{F}\!\mid_{X_{i}}$ soit un système local. Soit $i\in I,$ tel que $\eta\in X_{i}.$ Ici $\eta\in X^{\text{alg}}_{E,F}$ est le point générique. On peut trouver un voisinage ouvert $U$ de $\eta,$ qui est contenu dans $X_{i}$ (\cite[Tag 0AAW]{stacks}); c'est l'ouvert dense cherché.} Soit $Z=\lbrace x_{1},...,x_{n} \rbrace$ le fermé complémentaire; tous les $x_{i}$ sont des points fermés et les corps résiduels $\kappa(x_{i})$ sont algébriquement clos, car $F$ l'est par hypothèse.\footnote{On rappelle que les $\kappa(x_{i})$ sont des de-basculements de $F.$} Soit $i_{x_{i}}$ l'immersion fermée du point $x_{i}.$ On a donc $i_{x_{i}}^{*}(\mathcal{F})=:M_{i},$ $i=1,...,n$ où $M_{i}$ est un $\mathbb{Z}/n\mathbb{Z}$-module fini. De plus, on peut trouver un recouvrement fini étale
$$
\dot{\pi}\colon V\rightarrow U,
$$
tel que $\dot{\pi}^{*}(\mathcal{L})=\underline{M}_{V},$ où $M$ est encore un $\mathbb{Z}/n\mathbb{Z}$-module fini. Par adjonction, on a un morphisme
$$
\phi\colon \mathcal{F}\rightarrow j_{*}\dot{\pi}_{*}\dot{\pi}^{*}j^{*}(\mathcal{F})\oplus \bigoplus_{i=1}^{n}i_{x_{i} *}i_{x_{i}}^{*}(\mathcal{F})=:\mathcal{G}_{0}.
$$
On affirme que le morphisme $\phi$ est injectif. 
\\
Soient $x\in U$ et $\bar{x}$ un point géométrique au-dessus de $x$ et soit $\bar{y}$ un point géométrique de $V,$ qui s'envoie sur $\bar{x}.$ On obtient ainsi le diagramme commutatif suivant:
$$
\xymatrix{
\mathcal{F}_{\bar{x}} \ar[r] \ar[d]^{=} & (j_{*}\dot{\pi}_{*}(\underline{M}_{V}))_{\bar{x}} \ar[d] \\
(\dot{\pi}^{*}j^{*}(\mathcal{F}))_{\bar{y}} \ar[r]^{=} & (\underline{M}_{V})_{\bar{y}}.
}
$$
Ce diagramme implique alors que le morphisme d'adjonction
$$
\phi_{V}\colon \mathcal{F}\rightarrow j_{*}\dot{\pi}_{*}\dot{\pi}^{*}j^{*}(\mathcal{F})
$$
est injectif pour tout point $x\in U.$ De la même façon, on démontre que les morphismes d'adjonctions
$$
\phi_{x_{i}}\colon \mathcal{F}\rightarrow i_{x_{i} *}i_{x_{i}}^{*}(\mathcal{F})
$$
sont injectifs dans les points $x_{i},$ pour $i=1,...,n.$ Il en résulte que le morphisme $\phi$ est injectif. 
\\
Maintenant on affirme qu'on a
$$
R\Gamma(X^{\text{alg}}_{E,F},\,\mathcal{G}_{0})\simeq R\Gamma(X^{\text{ad}}_{E,F},\,\mathcal{G}_{0}^{\text{ad}}).
$$
Soit $\pi\colon X^{\prime}\rightarrow X^{\text{alg}}_{E,F}$ la normalisation de $X^{\text{alg}}_{E,F}$ dans $V.$ On obtient alors un diagramme 
$$
\xymatrix{
V \ar[r]^{j^{\prime}} \ar[d]^{\dot{\pi}} & X^{\prime} \ar[d]^{\pi}\\
U \ar[r]^{j} & X^{\text{alg}}_{E,F}.
}
$$
On affirme que le morphisme $\pi$ est fini plat: il est fini parce que $X^{\text{alg}}_{E,F}$ admet un recouvrement par des anneaux principaux qui sont de plus des $E$-algèbres; ce sont donc des anneaux de Nagata (\cite[Tag 0335]{stacks}). Le morphisme est plat parce qu'il n'y a pas de torsion dans le faisceau structurel $\mathcal{O}_{X^{\prime}}.$ De plus, $X^{\prime}$ est un schéma intègre, normal (de dimension $1,$ donc régulier) et le résultat \cite[Tag 09Z5]{stacks} implique alors que $j^{\prime}_{*}(\underline{M}_{V})\simeq \underline{M}_{X^{\prime}}.$ On en déduit que
\begin{align*}
j_{*}\dot{\pi}_{*}\dot{\pi}^{*}j^{*}(\mathcal{F}) & =(j\circ \dot{\pi}_{*})(\underline{M}_{V}) \\
 & = (\pi \circ j^{\prime})_{*}(\underline{M}_{V}) \\
 & = \pi_{*}(\underline{M}_{X^{\prime}}).
\end{align*}
On sait maintenant qu'on peut trouver pour tout faisceau constructible une injection vers un faisceau constructible pour lequel on sait déjà que le morphisme de comparaison est un isomorphisme; sous les hypothèses imposées dans l'énoncé. On a alors une suite exacte
$$
\xymatrix{
0 \ar[r] & \mathcal{F} \ar[r] & \mathcal{G}_{0} \ar[r] & \mathcal{G}_{1} \ar[r] & ...,
}
$$
où on sait déjà que $\Phi_{\mathcal{G}_{i}}$ sont des isomorphismes - on en déduit que $\Phi_{\mathcal{F}}$ lui aussi est un isomorphisme. 
\end{proof}
\subsection{GAGA: revêtements ramifiés}\label{subsection GAGA fuer verzweigte Sachen}
Soit $\pi\colon X^{\prime} \rightarrow X^{\alg}_{E,F}$ un revêtement ramifié, c.f. définition \ref{Def verzweigte Ueberdeckung}. D'après la proposition de Huber (\cite[Proposition 3.8.]{HuberFormalScheme}) qu'on vient d'utiliser, on peut construire un espace adique, fortement noethérien $X^{\prime \text{ad}}$ sur $\Spa(E),$ qui se place dans un diagramme commutatif 
$$
\xymatrix{
X^{\prime \text{ad}} \ar[r]^{\pi^{\text{ad}}} \ar[d]^{f^{\prime}} & X^{\text{ad}}_{E,F} \ar[d]^{f} \\
X^{\prime} \ar[r]^{\pi} & X^{\alg}_{E,F},
}
$$
tel que $X^{\prime \text{ad}}$ est caractérisé par une propriété universelle appropriée (c.f. l'énoncé de la proposition de Huber qu'on vient de cité).
De fait, il n'est pas très difficile de décrire l'espace $X^{\prime \text{ad}}$ localement d'une façon explicite: soient $t_{1},t_{2}\in H^{0}(X^{\alg}_{E,F},\mathcal{O}_{X^{\alg}_{E,F}}(1))$ et $D_{+}(t_{i})=\Spec(B_{e_{i}}),$ où $B_{e_{i}}=(B[1/t_{i}])^{\varphi=1},$ tels que 
$$
D_{+}(t_{1})\cup D_{+}(t_{2})=X^{\alg}_{E,F},
$$
i.e. $t_{1},t_{2}$ ne sont pas colinéaires dans le $E$-espace vectoriel $H^{0}(X^{\alg}_{E,F},\mathcal{O}_{X^{\alg}_{E,F}}(1)).$ Soient $I_{1},I_{2}$ des intervalles compacts dans $[0,1],$ tels que $\varphi(I_{i})\cap I_{i}=\emptyset$ et $V(t_{i})\cap Y_{I_{i}}=\emptyset,$ tel que $Y_{I_{1}}\cup Y_{I_{2}}=X^{\text{ad}}_{E,F}.$ Les morphismes $B\rightarrow B_{I_{i}}$ induisent donc des morphismes $B_{e_{i}}\rightarrow B_{I_{i}}.$ Comme le morphisme $\pi\colon X^{\prime}\rightarrow X^{\alg}_{E,F}$ est fini et plat, 
$$
X^{\prime}\times_{X^{\alg}_{E,F}}\Spec(B_{e_{i}})=\Spec(B_{e_{i}}^{\prime}),
$$
où $B_{e_{i}}^{\prime}$ est une $B_{e_{i}}$-algèbre finie plate (donc localement libre, parce que $B_{e_{i}}$ est noethérien). On peut alors considérer $B_{I_{i}}^{\prime}=B_{e_{i}}^{\prime}\otimes_{B_{e_{i}}}B_{I_{i}}.$ C'est une $E$-algèbre topologique munie de la topologie canonique en tant que $B_{I_{i}}$-module fini (en particulier c'est une $E$-algèbre de Banach). Soit $B_{I_{i}}^{\prime +}$ la clôture intégrale de $B_{e_{i}}^{\prime}\otimes_{B_{e_{i}}} B_{I_{i}}^{+}$ dans $B_{I_{i}}^{\prime}.$ Alors $(B_{I_{i}}^{\prime}, B_{I_{i}}^{\prime +})$ est une paire affinoïde de Huber et on a
$$
X^{\prime \text{ad}}\times_{X^{\text{ad}}_{E,F}}\Spa(B_{I_{i}},B_{I_{i}}^{+})=\Spa(B_{I_{i}}^{\prime}, B_{I_{i}}^{\prime +}),
$$
en effet; pour le voir on peut démontrer que $\Spa(B_{I_{i}}^{\prime}, B_{I_{i}}^{\prime +})$ vérifie bien la propriété universelle de la proposition \cite[Proposition 3.8.]{HuberFormalScheme}.
En particulier, on voit très bien que
$$
\pi^{\text{ad}}\colon X^{\prime \text{ad}}\rightarrow X^{\text{ad}}_{E,F}
$$
est un morphisme localement libre, i.e. $\pi^{\text{ad}}_{*}(\mathcal{O}_{X^{\prime}})$ est un fibré vectoriel sur $X^{\text{ad}}_{E,F};$ si $\mathcal{E}^{\prime}$ est un fibré vectoriel sur $X^{\prime \text{ad}},$ alors $\pi^{\text{ad}}_{*}(\mathcal{E}^{\prime})$ l'est aussi.
\begin{proposition}\label{GAGA verzweigte Sachen}
Il y a une équivalence entre les catégories des fibrés vectoriels sur $X^{\prime}$ et $X^{\prime \text{ad}},$ qui est induite par 
$$
f^{\prime *}\colon \text{Fib}(X^{\prime})\rightarrow \text{Fib}(X^{\prime \text{ad}}),
$$
où $f^{\prime}\colon X^{\prime \text{ad}}\rightarrow X^{\prime}$ est le morphismes d'espaces localement annelés introduit ci-dessus.
\end{proposition}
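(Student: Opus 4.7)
La stratégie est de ramener l'équivalence cherchée à l'équivalence de GAGA déjà connue pour la courbe elle-même (\cite[Prop.~II.2.7.]{FarguesScholze}), en exploitant le fait que $\pi$ et $\pi^{\text{ad}}$ sont finis localement libres. Plus précisément, je vais décrire les fibrés vectoriels sur $X^{\prime}$ (resp.\ $X^{\prime\text{ad}}$) comme des fibrés vectoriels sur $X^{\alg}_{E,F}$ (resp.\ $X^{\ad}_{E,F}$) munis d'une action de $\mathcal{A}:=\pi_{*}\mathcal{O}_{X^{\prime}}$ (resp.\ $\mathcal{A}^{\text{ad}}:=\pi^{\text{ad}}_{*}\mathcal{O}_{X^{\prime\text{ad}}}$), et transférer l'équivalence à travers ce dictionnaire.

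Premier pas: changement de base. En utilisant la description locale explicite rappelée juste avant l'énoncé -- et notamment l'identité $B_{I_{i}}^{\prime}=B_{e_{i}}^{\prime}\otimes_{B_{e_{i}}}B_{I_{i}}$ -- je vérifie que, pour tout fibré vectoriel $\mathcal{E}^{\prime}$ sur $X^{\prime},$ la flèche naturelle
$$
f^{*}\pi_{*}\mathcal{E}^{\prime}\longrightarrow \pi^{\text{ad}}_{*}f^{\prime *}\mathcal{E}^{\prime}
$$
est un isomorphisme (comme les deux côtés se calculent localement via le même produit tensoriel). Appliqué à $\mathcal{E}^{\prime}=\mathcal{O}_{X^{\prime}},$ ce changement de base fournit un isomorphisme de $\mathcal{O}_{X^{\text{ad}}_{E,F}}$-algèbres $f^{*}\mathcal{A}\simeq \mathcal{A}^{\text{ad}}.$ Puisque $\mathcal{A}$ et $\mathcal{A}^{\text{ad}}$ sont eux-mêmes des fibrés vectoriels, cette compatibilité est compatible au sens de l'équivalence de GAGA sur la courbe.

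Deuxième pas: description des fibrés via pousses. Comme $\pi$ est affine et fini localement libre, $\pi_{*}$ réalise une équivalence entre les $\mathcal{O}_{X^{\prime}}$-modules quasi-cohérents et les $\mathcal{A}$-modules quasi-cohérents sur $X^{\alg}_{E,F}.$ Puisque $X^{\prime}$ est noethérien, régulier de dimension~$1,$ un $\mathcal{O}_{X^{\prime}}$-module cohérent est un fibré vectoriel si et seulement s'il est sans torsion, ce qui est automatique pour tout $\mathcal{A}$-module qui est un fibré vectoriel sur $X^{\alg}_{E,F}.$ On obtient ainsi une équivalence entre $\text{Fib}(X^{\prime})$ et la catégorie des fibrés vectoriels sur $X^{\alg}_{E,F}$ munis d'une structure de $\mathcal{A}$-module. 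Le même argument, appliqué côté adique en utilisant que les anneaux $B_{I_{i}}^{\prime}$ sont finis sur les anneaux principaux $B_{I_{i}}$ (et donc sans torsion $=$ localement libre pour les modules cohérents), donne l'analogue pour $X^{\prime\text{ad}}.$

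Troisième pas: conclusion. L'équivalence de GAGA $f^{*}\colon \text{Fib}(X^{\alg}_{E,F})\xrightarrow{\sim}\text{Fib}(X^{\text{ad}}_{E,F})$ est un foncteur tensoriel; combinée avec l'isomorphisme $f^{*}\mathcal{A}\simeq \mathcal{A}^{\text{ad}}$ du premier pas et la pleine fidélité, elle échange les structures d'action: un fibré vectoriel $\mathcal{E}$ sur $X^{\alg}_{E,F}$ muni d'une $\mathcal{A}$-action s'envoie sur $f^{*}\mathcal{E}$ muni de la $\mathcal{A}^{\text{ad}}$-action induite, et réciproquement on relève toute structure de $\mathcal{A}^{\text{ad}}$-module en une structure de $\mathcal{A}$-module. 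En combinant avec les équivalences du deuxième pas et l'identité $\pi^{\text{ad}}_{*}\circ f^{\prime *}\simeq f^{*}\circ \pi_{*},$ on obtient que $f^{\prime *}\colon \text{Fib}(X^{\prime})\rightarrow \text{Fib}(X^{\prime\text{ad}})$ est une équivalence.

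Le point le plus délicat sera le deuxième pas côté adique, c'est-à-dire de vérifier que tout fibré vectoriel sur $X^{\text{ad}}_{E,F}$ muni d'une action compatible de $\mathcal{A}^{\text{ad}}$ provient bien d'un fibré vectoriel sur $X^{\prime\text{ad}}.$ Ceci se vérifie en passant à un recouvrement affinoïde du type $Y_{I_{i}}$ au-dessus duquel $\mathcal{A}^{\text{ad}}$ se trivialise comme $B_{I_{i}}$-module, puis en utilisant que les $B_{I_{i}}^{\prime}$-modules de type fini sans torsion sur un anneau noethérien régulier de dimension~$1$ sont localement libres; la compatibilité du recollement est garantie par l'unicité du $X^{\prime\text{ad}}$ dans la propriété universelle de Huber \cite[Proposition~3.8]{HuberFormalScheme}.
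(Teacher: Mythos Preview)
Your overall strategy coincides with the paper's: both reduce to GAGA on the curve itself by identifying $\text{Fib}(X')$ (resp.\ $\text{Fib}(X'^{\text{ad}})$) with vector bundles on $X^{\text{alg}}_{E,F}$ (resp.\ $X^{\text{ad}}_{E,F}$) equipped with an $\mathcal{A}$-module (resp.\ $\mathcal{A}^{\text{ad}}$-module) structure, and both use the base-change identity $\pi^{\text{ad}}_{*}\circ f'^{*}\simeq f^{*}\circ \pi_{*}$.

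There is, however, a genuine gap in your deuxième pas côté adique. You assert that $B'_{I_i}$ is a noetherian regular ring of dimension~$1$, so that ``torsion-free $=$ locally free'' for its coherent modules. Being finite over the PID $B_{I_i}$ gives noetherianity and dimension $\leq 1$, but \emph{not} regularity: a finite flat extension of a PID need not be regular (the ramification of $\pi$ is precisely the obstruction). One can in fact show that the local rings $\mathcal{O}_{X'^{\text{ad}},x'^{\text{ad}}}$ at maximal points over classical points are DVRs---their completions agree with the completions $\widehat{\mathcal{O}_{X',x'}}$, which are regular because $X'$ is---but this requires an argument you have not given, and which the paper does not supply either.

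The paper sidesteps this issue entirely. Rather than claiming that \emph{every} pair $(\mathcal{E}^{\text{ad}},\text{act})$ comes from a bundle on $X'^{\text{ad}}$, it only characterises the essential image of $\pi_{*}$ (resp.\ $\pi^{\text{ad}}_{*}$) as those pairs which are in addition locally free as $\mathcal{A}$-modules (resp.\ $\mathcal{A}^{\text{ad}}$-modules). For essential surjectivity of $f'^{*}$, it then starts from a pair on the adic side already satisfying this extra condition, descends the pair to the algebraic side via GAGA, and has to check that the resulting algebraic $\mathcal{E}$ is locally free as an $\mathcal{A}$-module. This is done stalk-wise at closed points $x'\in X'$: the paper proves in a separate lemma that the map $\mathcal{O}_{X',x'}\to\mathcal{O}_{X'^{\text{ad}},x'^{\text{ad}}}$ is faithfully flat, and then descends freeness along it. In other words, the paper transfers local freeness \emph{from the adic side to the algebraic side} (where regularity of $X'$ is given), whereas you try to establish regularity directly on the adic side. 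Your route is cleaner if the regularity claim is established, but as written it is incomplete; alternatively, note that you only need full faithfulness of $\pi^{\text{ad}}_{*}$ (which is automatic since $\pi^{\text{ad}}$ is affine) together with your algebraic equivalence, so the problematic adic equivalence is in fact not required for the conclusion.
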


\begin{proof}
On observe d'abord que le foncteur $\pi_{*}(.)$ de $\text{Fib}(X^{\prime})$ vers la catégorie des paires $(\mathcal{E}, \text{act}),$ où $\mathcal{E}$ est un $\mathcal{O}_{X^{\text{alg}}_{E,F}}$-module localement libre et $\text{act}$ est une structure de $\mathcal{A}=\pi_{*}(\mathcal{O}_{X^{\prime}})$-module sur $\mathcal{E},$ est pleinement fidèle. De fait, c'est le cas parce que $\pi\colon X^{\prime}\rightarrow X^{\alg}_{E,F}$ est fini localement libre. De plus, parce que $\pi^{\text{ad}}\colon X^{\prime \text{ad}}\rightarrow X^{\text{ad}}_{E,F}$ lui aussi est fini localement libre, $\pi^{\text{ad}}_{*}(.)$ est un foncteur pleinement fidèle de $\text{Fib}(X^{\prime \text{ad}})$ vers la catégorie des pairs $(\mathcal{E}^{\text{ad}},\text{act}),$ où $\mathcal{E}^{\text{ad}}\in \text{Fib}(X^{\text{ad}}_{E,F})$ et $\text{act}$ est une structure de $\mathcal{A}^{\text{ad}}=\pi^{\text{ad}}_{*}(\mathcal{O}_{X^{\prime \text{ad}}})$-module sur $\mathcal{E}^{\text{ad}}.$ D'après \cite[Prop. II.2.7]{FarguesScholze} il y a une équivalence de GAGA
$$
\Fib(X^{\alg}_{E,F})\simeq \Fib(X^{\text{ad}}_{E,F}).
$$
La donnée de $\text{act}$ est équivalente à la donnée d'un morphisme d'$\mathcal{O}_{X^{\alg}_{E,F}}$-algèbres
$$
\mathcal{A}\rightarrow \mathcal{E}nd_{\mathcal{O}_{X^{\alg}_{E,F}}}(\mathcal{E}).
$$
De la même façon, la donnée d'une action de $\mathcal{A}^{\text{ad}}$ sur $\mathcal{E}^{\text{ad}}$ est équivalente à la donnée d'un morphisme d'$\mathcal{O}_{X^{\text{ad}}_{E,F}}$-algèbres
$$
\mathcal{A}^{\text{ad}}\rightarrow \mathcal{E}nd_{\mathcal{O}_{X^{\text{ad}}_{E,F}}}(\mathcal{E}^{\text{ad}}).
$$
D'après l'équivalence de GAGA pour la courbe, on déduit une équivalence entre les catégories des paires $(\mathcal{E},\text{act})$ et $(\mathcal{E}^{\text{ad}},\text{act}).$ Dans cette étape on a utilisé que $\mathcal{A}$ et $\mathcal{A}^{\text{ad}}$ sont des fibrés vectoriels et que $\mathcal{A}$ s'envoie vers $\mathcal{A}^{\text{ad}}$ dans l'équivalence $
\Fib(X^{\alg}_{E,F})\simeq \Fib(X^{\text{ad}}_{E,F})$.
\\
Il résulte que le foncteur induit par $f^{\prime *}$ 
$$
\Fib(X^{\prime})\rightarrow \Fib(X^{\prime \text{ad}})
$$
est pleinement fidèle. Il faut donc se convaincre qu'il est également essentiellement surjectif. L'image essentielle de $\Fib(X^{\prime})$ dans la catégorie des paires $(\mathcal{E},\text{act})$ sous le foncteur $\pi_{*}(.)$ est caractérisé par la condition que $\mathcal{E}$ soit également fini localement libre comme $\mathcal{A}$-module; l'énoncé pareil est vrai sur la côté adique.
\\
Soit maintenant $(\mathcal{E}^{\text{ad}}, \text{act})$ une paire, tel que $\mathcal{E}^{\text{ad}}$ est aussi fini localement libre comme $\mathcal{A}^{\text{ad}}$-module et $(\mathcal{E},\text{act})$ la paire qui lui correspond. Il faut donc démontrer que $\mathcal{E}$ est aussi un $\mathcal{A}$-module fini localement libre. On observe d'abord qu'on peut trouver un recouvrement affine $X^{\alg}_{E,F}=\bigcup_{i\in I}U_{i},$ tel que $\mathcal{E}\!\mid_{U_{i}}$ est un $\mathcal{A}\! \mid_{U_{i}}$-module fini. De fait cela se déduit de l'observation suivante: soit $R\rightarrow R^{\prime}$ un morphisme d'anneaux et $M$ un $R^{\prime}$-module tel que $M$ est fini comme $R$-module, alors il est fini comme $R^{\prime}$-module. En effet, soit $R^{n}\twoheadrightarrow M$ une surjection, alors en appliquant $.\otimes_{R}R^{\prime}$ on a une surjection $R^{\prime n}\rightarrow M\otimes_{R} R^{\prime}$ de $R^{\prime}$-modules et parce que $M$ est aussi un $R^{\prime}$-module on a une surjection $M\otimes_{R}R^{\prime}\twoheadrightarrow M$ de $R^{\prime}$-modules.  Par le lemme de Nakayama, l'ensemble
$$
\lbrace x\in X^{\alg}_{E,F}\colon \mathcal{E}_{x} \text{ est un }\mathcal{A}_{x}\text{-module libre} \rbrace
$$
est ouvert. 
\\
Ensuite, on réinterprète la paire $(\mathcal{E},\text{act})$ comme un $\mathcal{O}_{X^{\prime}}$-module cohérent $\mathcal{E}^{\prime}$. Il faut démontrer que pour tout point fermé $x^{\prime}\in |X^{\prime}|$ les germes $\mathcal{E}^{\prime}_{x^{\prime}}$ sont libres sur $\mathcal{O}_{X^{\prime},x^{\prime}}$, si on suppose que $f^{\prime *}(\mathcal{E}^{\prime})=\mathcal{E}^{\prime \text{ad}}$ est localement libre comme $\mathcal{O}_{X^{\prime \text{ad}}}$-module.
 On sait qu'on a une bijection entre points fermés $|X^{\text{alg}}_{E,F}|$ et points classiques $|X^{\text{ad}}_{E,F}|^{\text{cl}}.$ Si $x\in |X^{\text{alg}}_{E,F}|$ correspond à $x^{\text{ad}}\in |X^{\text{ad}}_{E,F}|^{\text{cl}},$ il y a une bijection entre la fibre de $\pi\colon X^{\prime}\rightarrow X^{\text{alg}}_{E,F}$ au-dessus du point $x$ et la fibre de $\pi^{\text{ad}}\colon X^{\prime \text{ad}}\rightarrow X^{\text{ad}}_{E,F}$ au-dessus du point $x^{\text{ad}}.$ On trouve ainsi un point maximal $x^{\prime \text{ad}}\in |X^{\prime \text{ad}}|,$ tel que $x^{\prime \text{ad}}\mapsto x^{\prime}.$
\\
Par hypothèse
$$
\mathcal{E}^{\text{ad}}_{x^{\prime \text{ad}}}\simeq \mathcal{E}_{x^{\prime}}\otimes_{\mathcal{O}_{X^{\prime},x^{\prime}}}\mathcal{O}_{X^{\prime \text{ad}},x^{\prime \text{ad}}}
$$
est libre. Le lemme \ref{Technisches Lemma GAGA fpqc} ci-dessous implique que le morphisme $\mathcal{O}_{X^{\prime},x^{\prime}}\rightarrow \mathcal{O}_{X^{\prime \text{ad}},x^{\prime \text{ad}}}$ est fpqc et on peut en déduire que $\mathcal{E}_{x^{\prime}}$ est libre. 
\end{proof}
On a utilisé le lemme suivant:
\begin{Lemma_french}\label{Technisches Lemma GAGA fpqc}
\begin{enumerate}
\item[(a):] L'anneau local $\mathcal{O}_{X^{\prime \text{ad}},x^{\prime \text{ad}}}$ est noethérien.
\item[(b):] Le morphisme $\mathcal{O}_{X^{\prime},x^{\prime}}\rightarrow \mathcal{O}_{X^{\prime \text{ad}},x^{\prime \text{ad}}}$ est fpqc.
\end{enumerate}
\end{Lemma_french}
\begin{proof}
On commence par la preuve de (a). Il suffit de démontrer que $\mathcal{O}_{X^{\text{ad}}_{E,F},x^{\text{ad}}}$ est noethérien pour un point classique. Comme $x^{\prime \text{ad}}$ est un point maximal, l'anneau local $\mathcal{O}_{X^{\prime \text{ad}},x^{\prime \text{ad}}}$ est alors fini au-dessus de l'anneau local $\mathcal{O}_{X^{\text{ad}}_{E,F},x^{\text{ad}}}$ (c.f. \cite[Prop. 1.5.4.]{HuberBuch}).
\\
Ensuite on suit l'argument pour le résultat analogue dans la géométrie rigide classique (c.f. \cite[Prop. 7 dans 7.3.2.]{BoschGuentzerRemmert} ): l'anneau local se calcule dans un voisinage affinoïde $\Spa(B_{I})$ et $x^{\text{ad}}$ correspond ainsi à un ideal maximal $\mathfrak{m}_{x^{\text{ad}}}$ dans $B_{I}.$ La complétion $\mathfrak{m}_{x^{\text{ad}}}$-adique de l'anneau $B_{I}$ est isomorphe à la complétion de l'anneau $\mathcal{O}_{X^{\text{ad}},x^{\text{ad}}};$ c'est simplement $B_{dR}^{+}(\kappa(x^{\text{ad}})).$ Il en résulte que $\mathcal{O}_{\Spa(B_{I}),x^{\text{ad}}}\hookrightarrow B_{dR}^{+}(\kappa(x^{\text{ad}})):$ il suffit de démontrer que si l'image $f_{x}\in \mathcal{O}_{X^{\prime \text{ad}},x^{\text{ad}}}$ d'une fonction $f\in B_{I}$ est dans $\cap_{n} \mathfrak{m}^{n}_{x^{\text{ad}}}\mathcal{O}_{X^{\text{ad}},x^{\text{ad}}},$ alors $f=0.$ Il en utilisant $B_{I}/\mathfrak{m}^{n}_{x^{\text{ad}}}\simeq \mathcal{O}_{X^{\text{ad}},x^{\text{ad}}}/\mathfrak{m}^{n}_{x^{\text{ad}}}\mathcal{O}_{X^{\text{ad}},x^{\text{ad}}},$ on déduit $f\in \cap_{n}\mathfrak{m}^{n}_{x^{\text{ad}}}.$ Comme la localisation $B_{I, \mathfrak{m}_{x^{\text{ad}}}}$ est noethérienne, par le théorème de Krull, on a $f=0.$ Soit $\xi$ un générateur de l'idéal maximal $\mathfrak{m}_{x^{\text{ad}}}.$ L'idéal maximal de l'anneau local $\mathcal{O}_{X^{\text{ad}},x^{\text{ad}}}$ est engendré par l'image de $\xi$ dans cet anneau. Soit $\mathfrak{p}\in \Spec(\mathcal{O}_{X^{\text{ad}},x^{\text{ad}}}),$ tel que $\xi \notin \mathfrak{p}.$ Alors $\mathfrak{p}\subseteq (\xi),$ ce qui implique que pour $a\in \mathfrak{p},$ aussi $a/\xi \in \mathfrak{p},$ i.e. $\xi\mathfrak{p}=\mathfrak{p}.$ Comme $B_{dR}^{+}(\kappa(x^{\text{ad}}))$ est un anneau de valuation discrète, on peut en déduire $\mathfrak{p}=0.$ Cela implique que $\Spec(\mathcal{O}_{X^{\text{ad}},x^{\text{ad}}})=\lbrace 0, (\xi) \rbrace;$ l'anneau local est donc bien un anneau de valuation discrète. 
\\
Pour (b), on observe d'abord que d'après (a) on sait que le morphisme $\mathcal{O}_{X,x}\rightarrow \mathcal{O}_{X^{\text{ad}},x^{ \text{ad}}}$ est fpqc. Soit $\pi^{-1}(x)=\lbrace x_{1}^{\prime},...,x_{n}^{\prime} \rbrace$ et $(\pi^{\text{ad}}) ^{-1}(x^{\text{ad}})=\lbrace x_{1}^{\prime \text{ad}},...,x_{n}^{\prime \text{ad}} \rbrace.$ On note qu'il suffit de démontrer que le morphisme
$$
\prod_{i=1}^{n} \mathcal{O}_{X^{\prime},x^{\prime}_{i}}\rightarrow \prod_{i=1}^{n}\mathcal{O}_{X^{\prime \text{ad}},x^{\prime \text{ad}}_{i}}
$$
est fpqc. Néanmoins cette application s'identifie au changement de base de $\mathcal{O}_{X,x}\rightarrow \mathcal{O}_{X^{\text{ad}},x^{ \text{ad}}}$ le long de $B_{e}\rightarrow B_{e}^{\prime}.$ Elle est donc bien fpqc.
\end{proof}
\subsection{L'annulation implique la comparaison}\label{subsection Verschwinden impliziert Vergleich}
Dans cette section on démontre (c.f. Corollaire \ref{Verschwinden impliziert Vergleich}) que si on savait 
\begin{equation}\label{Gleichung Verschwinden auf beiden Seiten}
H^{i}(X^{\text{alg}}_{E,F},\,\mathcal{F})=H^{i}(X^{\text{ad}}_{E,F},\,\mathcal{F}^{\text{ad}})=0,
\end{equation}
pour $i\geq 3$ et tous les faisceaux constructibles $\mathcal{F}$ sur $X^{\text{alg}}_{E,F},$ la comparaison, Conjecture \ref{Vermutung Vergleich}, est vraie. Plus tard on démontre l'égalité (\ref{Gleichung Verschwinden auf beiden Seiten}) dans le cas de $\ell$-torsion, $\ell\neq p$ (c.f. proposition \ref{l-torsion adische Kurve} pour la courbe adique et proposition \ref{l-kohomologische Dimension algebraische Kurve} pour la courbe algébrique).
\subsubsection{Qu'est-ce qu'il est fait exactement dans cette section?}\label{subsubsection Ueberblick}
Je rappelle d'abord qu'on a déjà réduit la conjecture de comparaison à l'énonce
\begin{equation}\label{Gleichung Vergleich Verzweigte Sachen konstant}
H^{i}(X^{\prime},\mathbb{F}_{\ell})\simeq H^{i}(X^{\prime \text{ad}},\mathbb{F}_{\ell}),
\end{equation}
où $i\geq 0,$ $\pi\colon X^{\prime}\rightarrow X^{\text{alg}}_{E,F}$ est un revêtement ramifié et $\ell$ un nombre premier arbitraire (c.f. lemme \ref{Reduktion von allgemein konstru zu konstant verzweigte}).
\\
Comme on suppose l'égalité (\ref{Gleichung Verschwinden auf beiden Seiten}), on a
$$
H^{i}(X^{\prime},\,\mathbb{F}_{\ell})=H^{i}(X^{\prime \text{ad}},\,\mathbb{F}_{\ell})=0,
$$
pour tout $\ell$ et $i\geq 3.$\footnote{Plus tard on démontre $H^{i}(X^{\prime \text{ad}},\,\mathbb{F}_{p})=0$ pour tout revêtement ramifié sous l'hypothèse (Stein), c.f. proposition \ref{Prop Fall l=p}.}
\\
En utilisant l'équivalence de GAGA pour tout revêtement ramifié (proposition \ref{GAGA verzweigte Sachen}) on voit tout de suite que l'égalité (\ref{Gleichung Vergleich Verzweigte Sachen konstant}) est bien vérifié pour $i=0,1$ (c.f. la preuve de la proposition \ref{Vergleich in Graden 0,1,2 verzweigte Ueberdeckungen} pour plus de détails).
\\
Le point essentiel est ainsi de traiter le cas où $i=2:$ encore par GAGA et la suite exacte de Kummer, il suffit de démontrer que le groupe de Brauer de $X^{\prime \text{ad}}$ est isomorphe au groupe de Brauer cohomologie. Pour cette étape je me suis fortement inspiré d'un résultat de Gabber; il a démontré dans sa thèse \cite{GabberAzumayaAlgebras} le théorème suivant: si $S$ est un schéma séparé qui admet un recouvrement par deux affines, alors on a
$$
\Br(S)\simeq \Br^{\prime}(S)
$$
(c.f. théorème \ref{Gabbers satz ueber Brauergruppen}). Comme $X^{\prime \text{ad}}$ est un espace adique séparé, recouvert par deux affinoides, il est naturel d'essayer d'adapter la preuve du théorème de Gabber. Pour cette étape j'ai trouvé que le mieux est de suivre la preuve du résultat de Gabber donnée par Lieblich dans sa thèse (\cite{LieblichPhD}): il utilise des faisceaux tordus sur des gerbes et dans mon approche je le suis presque mot pour mot.
\subsubsection{Rappels sur le groupe de Brauer et les faisceaux tordus}\label{subusbsection Widerholungen zu Brauergruppe und getwisteten Garben}
Soit $X$ un espace adique fortement noethérien sur un corps non-archimédien $(K,\mathcal{O}_{K}),$ qu'on suppose d'être de caractéristique $0.$\footnote{Pour pouvoir utiliser l'exactitude de la suite exacte de Kummer.} On adopte la définition d'une algèbre d'Azumaya suivante:
\begin{Definition_french}
Soit $\mathcal{A}$ un faisceau analytique d'$\mathcal{O}_{X}$-algèbres. Le faisceau $\mathcal{A}$ s'appelle algèbre d'Azumaya, si le $\mathcal{O}_{X}$-module sous-jacent est fini localement libre et il existe un recouvrement étale $f\colon Y\rightarrow X$ et un fibré vectoriel $\mathcal{E}$ sur $Y,$ tel que
$$
f^{*}(\mathcal{A})\simeq \mathcal{E}nd_{\mathcal{O}_{Y}}(\mathcal{E}).
$$
Le degré d'une algèbre d'Azumaya $\mathcal{A}$ est le rang d'un fibré vectoriel qui trivialise $\mathcal{A}.$
\end{Definition_french}
Comme pour les schémas il existe une description équivalente qui est purement 'cohérente':
\begin{Lemma_french}\label{Andere Beschreibung von Azumaya Algebren}
Soit $\mathcal{A}$ un faisceau analytique en $\mathcal{O}_{X}$-algèbres, tel que le $\mathcal{O}_{X}$-module sous-jacent est fini localement libre. Les deux conditions sont équivalentes:
\begin{enumerate}
\item[(a):] $\mathcal{A}$ est une algèbre d'Azumaya,
\item[(b):] le morphisme canonique d'$\mathcal{O}_{X}$-algèbres
$$
\psi\colon \mathcal{A}\otimes_{\mathcal{O}_{X}}\mathcal{A}^{\text{op}}\rightarrow \mathcal{E}nd_{\mathcal{O}_{X}}(\mathcal{A}),
$$
donné par $a\otimes b\mapsto (x\mapsto axb),$ est un isomorphisme.
\end{enumerate}
\end{Lemma_french}
\begin{proof}
On démontre d'abord que (a) implique (b). Afin d'expliquer que $\psi$ est bien un isomorphisme, on peut travailler sur la localisation stricte de l'espace adique $X$ dans un point géométrique $\xi$ (c.f. \cite[Def. 2.5.11]{HuberBuch}). Sur la localisation stricte on peut utiliser qu'on sait déjà que $\psi$ est un isomorphisme pour les algèbres de matrices usuelles.
\\
(b) implique (a): on peut supposer d'abord que $X$ est affinoïde et soit $x\in X.$ Soit $\bar{x}$ un point géométrique au-dessus de $x$ et soit $X(\bar{x})=(\mathcal{O}_{X(\bar{x})},\mathcal{O}_{X(\bar{x})}^{+})$ la localisation stricte de $X$ au point $\bar{x}.$ L'algèbre $\mathcal{A}\! \mid_{X(\bar{x})}$ correspond à une algèbre sur $\mathcal{O}_{X(\bar{x})},$ tel que le morphisme $\psi$ soit un isomorphisme. Il en résulte, qu'on a un isomorphisme
$$
\mathcal{A}\!\mid_{X(\bar{x})}\simeq \text{Mat}_{m}(\mathcal{O}_{X(\bar{x})}).
$$
On peut donc trouver un voisinage étale affinoïde $U$ de $x,$ tel que $\mathcal{A}\!\mid_{U}\simeq \text{Mat}_{n}(\mathcal{O}_{U}).$ En prenant l'union disjointe de tous ces voisinages étales, on trouve le recouvrement étale cherché.
\end{proof}
On considère le schéma affine en groupes $\PGL_{n, K}$ sur $K.$ Alors on note $$\PGL_{n}^{\text{ad}}=\PGL_{n}\times_{\Spec(K)}\Spa(K,\mathcal{O}_{K}),$$ où on a pris le produit fibré au sens de la proposition \cite[Proposition 3.8.]{HuberFormalScheme}. Pour tout espace affinoïde $\Spa(R,R^{+})$ au-dessus de $\Spa(K,\mathcal{O}_{K}),$ on a
$$
\PGL_{n}^{\text{ad}}(R,R^{+})=\PGL_{n}(R).
$$
\begin{Lemma_french}
Il y a un isomorphisme entre faisceaux étales dans $\widetilde{X}_{\et},$
$$
\PGL_{n, X}^{\text{ad}}=\PGL_{n, K}^{\text{ad}}\times_{\Spa(K,\,\mathcal{O}_{K})} X\simeq \underline{\Aut}(\Mat_{n}(\mathcal{O}_{X_{\et}})).
$$
\end{Lemma_french}
\begin{proof}
Il existe un morphisme entre faisceaux étales dans $\widetilde{X}_{\et},$
$$
\PGL_{n, X}^{\text{ad}}\rightarrow \underline{\Aut}(\Mat_{n}(\mathcal{O}_{X_{\et}}))
$$
et il suffit de démontrer qu'il induit un isomorphisme sur les localisations strictes. Ici on peut utiliser l'énonce analogue pour des schémas.
\end{proof}
Par la descente étale il résulte de ce lemme qu'on a une bijection
$$
\lbrace \text{algèbres d'Azumaya sur }X\text{ de degré }n \rbrace / \text{iso} \simeq H^{1}_{\text{ét}}(X,\PGL_{n, X}^{\text{ad}}).
$$
\begin{Definition_french}
Soit $X$ un espace adique fortement noethérien sur $\Spa(K,\mathcal{O}_{K}).$ Le groupe de Brauer ('cohérent') de $X$ est le groupe (via le produit tensoriel) de toutes les algèbres d'Azumaya sur $X,$ modulo la relation d'équivalence suivante: on dit que $\mathcal{A}\sim \mathcal{B},$ s'il existe des fibrés vectoriels $\mathcal{E},\mathcal{E}^{\prime}$ sur $X,$ et un isomorphisme
$$
\mathcal{A}\otimes_{\mathcal{O}_{X}}\mathcal{E}nd_{\mathcal{O}_{X}}(\mathcal{E})\simeq \mathcal{B}\otimes_{\mathcal{O}_{X}} \mathcal{E}nd_{\mathcal{O}_{X}}(\mathcal{E}^{\prime}).
$$
\end{Definition_french}
On a le résultat suivant de type GAGA pour le groupe de Brauer:
\begin{Lemma_french}\label{GAGA fuer die Brauergruppe}
\begin{enumerate}
\item[(a):] Soit $(A,A^{+})$ une paire affinoïde fortement noethérienne sur $\Spa(K,\mathcal{O}_{K}),$ alors on a $$\Br(\Spec(A))\simeq \Br(\Spa(A,A^{+})).$$
\item[(b):] Soit $\pi\colon X^{\prime}\rightarrow X^{\text{alg}}_{E,F}$ un revêtement ramifié, alors on a
$$
\Br(X^{\prime})\simeq \Br(X^{\prime \ad}).
$$
\end{enumerate}
\end{Lemma_french}
\begin{proof}
En utilisant le lemme \ref{Andere Beschreibung von Azumaya Algebren} ci-dessus, les deux points (a) et (b) résultent des équivalences de catégories $\text{Fib}(\Spa(A,A^{+}))\simeq \text{Fib}(\Spec(A))$ (un résultat de Kedlaya-Liu \cite[Thm. 2.7.7.]{KedlayaLiuFoundations}) et $\text{Fib}(X^{\prime})\simeq \text{Fib}(X^{\prime \text{ad}})$ (Prop. \ref{GAGA verzweigte Sachen}).
\end{proof}
Maintenant on explique le lien avec le groupe de Brauer cohomologique; ce groupe est défini de la façon suivante:
$$
\Br^{\prime}(X):=H^{2}(X,\,\mathbb{G}_{m}^{\text{ad}})_{\text{tors}}.
$$
En utilisant la suite exacte courte 
\begin{equation}
\xymatrix{
1 \ar[r] & \mathbb{G}_{m}^{\text{ad}} \ar[r] & \GL_{n}^{\text{ad}} \ar[r]^{\text{det}} & \PGL_{n}^{\text{ad}} \ar[r] & 1, 
}
\end{equation}
on a une injection\footnote{L'argument usuel s'applique aussi: en négligeant les algèbres d'Azumaya de la forme $\mathcal{E}nd(\mathcal{E})$ on force l'injectivité du morphisme de connexion c.f. \cite[V.4. 4.4.]{GiraudNonAbelienne}}
$$
\iota\colon \Br(X)\rightarrow \Br^{\prime}(X)
$$
et la classe de cohomologie d'une algèbre d'Azumaya $\mathcal{A}$ est notée $[\mathcal{A}]\in \Br^{\prime}(X).$
\\
Le prochain objectif est d'expliquer un critère pour déterminer quand une classe $c\in \Br^{\prime}(X)$ est de la forme $[\mathcal{A}]$ pour une algèbre d'Azumaya $\mathcal{A}$ sur $X.$ Ce critère est formulé en utilisant des faisceaux tordus localement libres sur des gerbes. Comme je n'ai pas trouvé dans la littérature une bonne théorie des champs 'géométriques' pour des espaces adiques il n'existe donc a priori pas une bonne théorie des faisceaux sur ces champs. Il faut faire ainsi un peu attention!
\\
Soit $X$ encore un espace adique fortement noethérien sur $\Spa(K,\mathcal{O}_{K}),$ où $K$ est un corps NA de caractéristique $0.$ Comme Huber dans \cite{HuberBuch} on peut ainsi considérer le petit site étale de $X,$ noté $(X)_{\text{ét}}.$ On suit la définition du stacks-project d'un champ dans le site $(X)_{\text{ét}},$ i.e. on prend la définition \cite[Tag 026F]{stacks} (i.e. une catégorie fibrée en groupoïdes au-dessus de $(X)_{\text{ét}},$ telle que le foncteur des morphismes entre objets dans la catégorie fibrée est un faisceau sur $(X)_{\text{ét}}$ et toute donnée de descente est effective). Un tel champ $\mathfrak{X}$ admet un gros site étale, noté $(\mathfrak{X})_{\text{Ét}}.$ Les objets sont des espaces adiques fortement noethériens $S,$ étales au-dessus de $X,$ avec la donnée d'un morphisme entre catégories fibrées $f\colon S\rightarrow \mathfrak{X}$ (i.e. une section du champ $\mathfrak{X}$) et les morphismes entre $f\colon S\rightarrow \mathfrak{X}$ et $g\colon S^{\prime}\rightarrow \mathfrak{X}$ sont des morphismes $\psi\colon S\rightarrow S^{\prime}$ au-dessus de $\mathfrak{X},$ i.e. un morphisme $S\rightarrow S^{\prime}$ au-dessus de $X$ avec un $2$-isomorphisme 
$$
\phi\colon g \circ \psi \simeq f.
$$ 
Les recouvrements sont engendrés par les recouvrements étales $S\rightarrow S^{\prime}.$ Avec cette définition j'affirme qu'il s'agit bien d'un site: les isomorphismes sont des recouvrements et on peut tester localement sur $S$ la propriété d'être un recouvrement; les recouvrements sont stables par changement de base parce qu'on ne travaille qu'avec des espaces adiques qui sont (localement) fortement noethériens. Sur ce site on a le faisceau des anneaux, noté $\mathcal{O}_{\mathfrak{X}},$ qui associe à $(f\colon S\rightarrow \mathfrak{X})\in (\mathfrak{X})_{\text{Ét}}$ les sections globales $H^{0}(S,\mathcal{O}_{S}).$ On affirme qu'on a un morphisme de topoi annelés
$$
\pi\colon (\widetilde{(\mathfrak{X})_{\text{Ét}}},\mathcal{O}_{\mathfrak{X}})\rightarrow (\widetilde{(X)_{\text{ét}}},\mathcal{O}_{X_{\text{ét}}}).
$$
De fait, si $\mathcal{F}\in \widetilde{(X)_{\text{ét}}},$ alors $\pi^{*}(\mathcal{F})\in \widetilde{(\mathfrak{X})_{\text{Ét}}}$ est le faisceau qui associée à $(S\rightarrow \mathfrak{X})\in (\mathfrak{X})_{\text{Ét}}$ l'objet $\mathcal{F}(S);$ on a donc par définition que $\mathcal{O}_{\mathfrak{X}}=\pi^{*}\mathcal{O}_{X_{\text{ét}}}.$ Si $p\colon \mathfrak{Y}\rightarrow \mathfrak{X}$ est un morphisme entre champs sur $(X)_{\text{ét}},$ on a un morphisme de topoi induit $p\colon \widetilde{(\mathfrak{Y})_{\text{Ét}}}\rightarrow \widetilde{(\mathfrak{X})_{\text{Ét}}}:$ si $\mathcal{F}$ un faisceau dans $\widetilde{(\mathfrak{X})_{\text{Ét}}}$ et $(f\colon S\rightarrow \mathfrak{Y})\in (\mathfrak{Y})_{\text{Ét}}$ un objet dans le site, alors on a $p^{*}\mathcal{F}(S)=\mathcal{F}(p\circ f).$
\\
Maintenant on introduit quelques exemples de champs qui vont être importants pour la suite. 
\begin{Definition_french}
Une $\mathbb{G}_{m}^{\text{ad}}$-gerbe sur $(X)_{\text{ét}}$ est la donnée d'un champ $\mathfrak{X}$ sur $(X)_{\text{ét}}$ et, pour toute section $x\in \mathfrak{X}(S),$ d'un isomorphisme entre faisceaux étales sur $S,$
$$
i_{x}\colon \underline{\Aut}(x)\simeq \mathbb{G}_{m S}^{\text{ad}},
$$
tel que les conditions suivantes sont satisfaites:
\begin{enumerate}
\item[(a):] Soit $S\in (X)_{\text{ét}},$ alors il existe un recouvrement étale $S^{\prime}\rightarrow S,$ tel qu'il existe une section dans $\mathfrak{X}(S^{\prime}).$
\item[(b):] Soit $x,y\in \mathfrak{X}(S),$ il existe un recouvrement étale $S^{\prime}\rightarrow S,$ tel que $x\!\mid_{S^{\prime}}\simeq y\!\mid_{S^{\prime}}$ dans $\mathfrak{X}(S^{\prime}).$
\item[(c):] Soit $x\simeq y$ dans $\mathfrak{X}(S),$ le diagramme suivant est commutatif:
$$
\xymatrix{
\mathbb{G}_{m S}^{ad} \ar[r]^{i_{x}} \ar[d]^{i_{y}}  & \underline{\Aut}(y) \ar[ld] \\
\underline{\Aut}(x).
}
$$
\end{enumerate}
\end{Definition_french}
D'après un théorème de Giraud (c.f. \cite[IV.3.]{GiraudNonAbelienne}) on sait qu'il existe une bijection naturelle entre classes d'isomorphisme de $\mathbb{G}_{m}^{\text{ad}}$-gerbes et classes de cohomologie dans $H^{2}_{\text{ét}}(X,\mathbb{G}_{m}^{\text{ad}}).$ 
\begin{Remark_french}\label{Bemerkung nach Def zu getwisteten Garben}
\begin{enumerate}
\item[(a):] En remplaçant le groupe $\mathbb{G}_{m}^{\text{ad}}$ par le groupe $\mu_{n}^{\text{ad}},$ on obtient la notion d'une $\mu_{n}^{\text{ad}}$-gerbe sur $X.$
\item[(b):] L'exemple de base d'une $\mathbb{G}_{m}^{\text{ad}}$-gerbe est le champ classifiant $B\mathbb{G}_{m}^{\text{ad}},$ qui associe à $S\in (X)_{\text{ét}}$ le groupoïde des fibrés en droites sur $S.$ Localement pour la topologie étale sur $X,$ toute $\mathbb{G}_{m}^{\text{ad}}$-gerbe sur $X$ est de cette forme.
\item[(c):] Soit $\mathcal{A}$ une algèbre d'Azumaya sur $X.$ Soit $\text{Triv}(\mathcal{A})$ le champs sur $(X)_{\text{ét}},$ qui associe à $S\in (X)_{\text{ét}}$ le groupoïde des paires $(\mathcal{W},\psi),$ où $\mathcal{W}$ est un fibré vectoriel sur $S$ et $\psi\colon \mathcal{A}\!\mid_{S}\simeq \mathcal{E}nd_{\mathcal{O}_{S}}(\mathcal{W})$ est un isomorphisme de $\mathcal{O}_{S}$-algèbres. $\text{Triv}(\mathcal{A})$ est ainsi une $\mathbb{G}_{m}^{\text{ad}}$-gerbe sur $X$ (c.f. \cite[V.4. 4.2.]{GiraudNonAbelienne}), telle que
$$
[\text{Triv}(\mathcal{A})]=[\mathcal{A}]\in H^{2}(X,\,\mathbb{G}_{m}^{\text{ad}}),
$$
(c.f. \cite[V.4. Remarque 4.5.]{GiraudNonAbelienne}).
\end{enumerate}
\end{Remark_french}
On observe que sur la $\mathbb{G}_{m}^{\text{ad}}$-gerbe $\text{Triv}(\mathcal{A}),$ il existe une certaine structure supplémentaire; celle d'un fibré vectoriel $\mathcal{W}^{\text{uni}}$ satisfaisant la propriété que les deux actions de $\mathbb{G}_{m}^{\text{ad}},$ l'une donnée par la structure $\mathcal{O}_{\text{Triv}(\mathcal{A})}$-linéaire sur $\mathcal{W}^{\text{uni}}$ et l'autre par l'action inertielle (c.f. en bas), coïncident. Même si cette observation semble un peu triviale, sur des $\mathbb{G}_{m}^{\text{ad}}$-gerbes générales c'est exactement la bonne condition pour vérifier si la classe de cohomologie de cette gerbe est induite par une algèbre d'Azumaya.
\\
Soit maintenant $\mathfrak{X}\rightarrow X$ une $\mathbb{G}_{m}^{\text{ad}}$ (resp. $\mu_{n}^{\text{ad}}$) gerbe sur $X.$ On considère un faisceau $\mathcal{F}$ sur $(\mathfrak{X})_{\text{Ét}}.$ Dans cette situation on a une action
$$
\mathcal{F}\times \mathbb{G}_{m}^{\text{ad}}\rightarrow \mathcal{F}.
$$
De fait, soit $s\colon S\rightarrow \mathfrak{X}$ une section de la gerbe et soit $\lambda_{S}\in \mathbb{G}_{m}^{\text{ad}}(S)\simeq \underline{\text{Aut}}(s)(S).$ Par définition de ce qu'est un faisceau $\mathcal{F}$ sur $(\mathfrak{X})_{\text{Ét}},$ $\lambda_{S}$ induit un isomorphisme
$$
\lambda_{S}^{*}(\mathcal{F})\simeq \mathcal{F}.
$$ Sur les sections cela induit une action
$$
\lambda_{S}^{*}\colon \mathcal{F}(S)\rightarrow \mathcal{F}(S).
$$
Cette action s'appelle l'action inertielle (c.f. \cite[Def. 2.2.1.6.]{LieblichTwistedSheavesCompositio}).
\\
On parvient à la définition d'un faisceau tordu sur une $\mathbb{G}_{m}^{\text{ad}}$-gerbe:
\begin{Definition_french}\label{Def getwistete Garben}
Soit $\mathfrak{X}$ une $\mathbb{G}_{m}^{\text{ad}}$-gerbe (resp. $\mu_{n}^{\text{ad}}$) sur $X.$
\begin{enumerate}
\item[(a):] Un fibré vectoriel (de rang constant $n$) sur $\mathfrak{X}$ est un morphisme de champs sur $(X)_{\text{ét}},$
$$
\mathfrak{X}\rightarrow \Fib_{n},
$$
où $\Fib_{n}$ est le champs sur $(X)_{\text{ét}}$ qui classifie les fibrés vectoriels de rang $n.$
\item[(b):] Soit $\mathcal{E}$ un $\mathcal{O}_{\mathfrak{X}}$-module sur $(\mathfrak{X})_{\text{Ét}}.$ La structure $\mathcal{O}_{\mathfrak{X}}$-linéaire sur $\mathcal{E}$ induit une action par multiplication avec les scalaires
$$
\mathbb{G}_{m}^{\text{ad}}\times \mathcal{E}\rightarrow \mathcal{E}.
$$
Le faisceau $\mathcal{E}$ s'appelle faisceau tordu si l'action sur la gauche associée à l'action inertielle (une action sur la droite) coïncide avec l'action par multiplication avec les scalaires.
\item[(c):] Un faisceau $n$-tordu sur une $\mathbb{G}_{m}^{\text{ad}}$ gerbe $\mathfrak{X}$ sur $X$ est un $\mathcal{O}_{\mathfrak{X}}$-module $\mathcal{E}$ sur $(\mathfrak{X})_{\text{Ét}},$ tel que l'action sur la gauche associée à l'action inertielle coïncide avec l'action par multiplication avec des scalaires relevée à la puissance $n$-ième. Un faisceau $1$-tordu est ainsi un faisceau tordu. Un faisceau localement libre $n$-tordu est un faisceau $n$-tordu $\mathcal{E}$ sur $\mathfrak{X},$ tel que $\mathcal{E}$ est un fibré vectoriel.
\end{enumerate}
\end{Definition_french}
Voici les propriétés de base qui seront nécessaire pour la suite.
\begin{Lemma_french}\label{Lemma zu getwisteten Garben}
Soit $\mathfrak{X}$ une $\mathbb{G}_{m}^{\text{ad}}$-gerbe (resp. une $\mu_{n}^{\text{ad}}$-gerbe) sur $X.$
\begin{enumerate}
\item[(a):] Soient $\mathcal{E}$ resp. $\mathcal{F}$ des faisceaux $n$- resp. $m$-tordus, localement libres sur $\mathfrak{X}.$ Alors $\mathcal{E}\otimes \mathcal{F}$ est un faisceau $n+m$-tordu, localement libre, $\mathcal{H}om_{\mathcal{O}_{\mathfrak{X}}}(\mathcal{E},\mathcal{F})$ est un faisceau $n-m$-tordu, localement libre.
\item[(b):] Le tiré en arrière le long des morphismes de topoi annelés
$$
\pi\colon \widetilde{(\mathfrak{X})_{\text{ét}}}\rightarrow \widetilde{(X)_{\text{ét}}}
$$
induit une équivalence entre la catégorie des faisceaux localement libres $0$-tordus et les $\mathcal{O}_{(X)_{\text{ét}}}$-modules localement libres.
\item[(c):] On peut recoller les faisceaux localement libres, tordus le long des recouvrements analytique de $X.$ Plus précisément: soit $X=U\cup V$ un recouvrement par des ouverts analytiques. On suppose qu'on se donne des faisceaux tordus localement libres $\mathcal{P}\neq 0$ sur $\mathfrak{X}\times_{X}U$ et $\mathcal{Q}$ sur $\mathfrak{X}\times_{X}V,$ tel que 
$$
\mathcal{Q}\!\mid_{U\cap V}\simeq \mathcal{P}\!\mid_{U\cap V}.
$$ 
Alors il existe un unique faisceau tordu, localement libre $\mathcal{M}$ sur $\mathfrak{X},$ tel que $\mathcal{M}\!\mid_{U}\simeq \mathcal{P}$ et $\mathcal{M}\!\mid_{V}\simeq \mathcal{Q}.$
\item[(d):] On suppose que $c\mapsto c^{\prime}$ par $H^{2}(X,\, \mu_{n}^{\text{ad}})\rightarrow H^{2}(X,\,\mathbb{G}_{m}^{\text{ad}})$ et que $\mathfrak{X}_{c}$ resp. $\mathfrak{X}_{c^{\prime}}$ sont des $\mu_{n}^{\text{ad}}$-gerbes resp. $\mathbb{G}_{m}^{\text{ad}}$-gerbes sur $X,$ représentants  de $c$ resp. $c^{\prime}.$ Alors il existe un $1$-morphisme 
$$
F\colon \mathfrak{X}_{c}\rightarrow \mathfrak{X}_{c^{\prime}},
$$
tel que pour toute $x=(f\colon S\rightarrow \mathfrak{X}_{c})\in \mathfrak{X}_{c}(S),$ le morphisme induit par $F$
$$
\underline{\Aut(x)}\simeq \mu_{n, S}^{\text{ad}}\rightarrow \underline{\Aut(F(x))}\simeq \mathbb{G}_{m, S}^{\text{ad}}
$$
s'identifie au changement de base du morphisme canonique $\mu_{n}^{\text{ad}}\rightarrow \mathbb{G}_{m}^{\text{ad}}.$ Le tiré en arrière le long de $F$ induit une équivalence entre la catégorie des faisceaux tordus localement libres sur $\mathfrak{X}_{c^{\prime}}$ et sur $\mathfrak{X}_{c}.$
\end{enumerate}
\end{Lemma_french}
\begin{proof}
Le point (a) est formel. Le point (b) sans la condition 'localement libre' est démontré par Lieblich dans sa thèse, c.f. \cite[Lemma 2.1.1.17.]{LieblichPhD}: la condition d'être $0$-tordu implique simplement que l'action inertielle est triviale. Cela implique que le morphisme d'adjonction (qui vient simplement de la définition d'un morphisme entre topoi annelés)
$$
\pi^{*}\pi_{*}(\mathcal{E})\rightarrow \mathcal{E}
$$
est un isomorphisme; cette étape est expliquée par Lieblich dans la preuve loc.cit.\footnote{Ici $\pi_{*}(\mathcal{E})$ est a priori qu'un $\mathcal{O}_{(X)_{\text{ét}}}$-module (c.f. la définition du stacks project \cite[Tag 03D6]{stacks}).} Il en résulte que $\mathcal{E}$ est de la forme $\pi^{*}(\mathcal{F}),$ où $\mathcal{F}$ est un $\mathcal{O}_{(X)_{\text{ét}}}$-module. J'affirme que $\mathcal{F}$ est un $\mathcal{O}_{(X)_{\text{ét}}}$-module localement libre. Par définition, on peut vérifier ceci localement sur la topologie étale sur $X.$ On peut donc supposer, que la $\mathbb{G}_{m}^{\text{ad}}$-gerbe soit scindée, i.e. de la forme $X\times_{\Spa(K)} B\mathbb{G}_{m}^{\text{ad}}.$ Soit $s\colon X\rightarrow X\times_{\Spa(K)} B\mathbb{G}_{m}^{\text{ad}}$ la section de la projection $\pi\colon X\times_{\Spa(K)} B\mathbb{G}_{m}^{\text{ad}}\rightarrow X.$ Le faisceau localement libre, $0$-tordu $\mathcal{E}$ est maintenant un fibré vectoriel sur $X\times_{\Spa(K)} B\mathbb{G}_{m}^{\text{ad}}.$ On a donc
$$
\mathcal{F}\simeq s^{*}(\pi^{*}(\mathcal{F}))\simeq s^{*}(\mathcal{E})
$$
et $\mathcal{F}$ est bien un fibré vectoriel.
\\
Pour démontrer le point (c), il est préférable de prendre un autre point du vue sur des faisceaux tordus; celui de Caldararu: on pense à la gerbe $\mathfrak{X}$ comme représentant d'une classe de cohomologie $[\mathfrak{X}]\in H^{2}(X,\mathbb{G}_{m}^{\text{ad}}).$ On peut trouver un hyper-recouvrement augmenté\footnote{Je rappelle qu'un hyper-recouvrement augmenté $U_{\bullet}\rightarrow X$ est un object simplicial $U_{\bullet}$ dans $(X)_{\text{ét}}$ avec $U_{-1}=X$ et on exige que pour $n\geq -1,$ 
$$
U_{n+1}\rightarrow (\text{cosk}_{n}(U_{\bullet})_{\leq n})_{n+1}
$$
est un recouvrement pour la topologie étale. } pour la topologie étale $X_{\bullet}\rightarrow X,$ avec une section $\alpha\in H^{0}(X_{2},\mathbb{G}_{m}^{\text{ad}}),$ qui représente la classe $[\mathfrak{X}]$ et telle que le cobord sur $X_{3}$ est trivial (c'est un théorème de Verdier qu'on utilise ici, c.f. \cite[Exposé V.7]{SGA4Tome2}). Un faisceau tordu, localement libre au sens de Caldararu est alors la donnée d'une paire $(\mathcal{F},g),$ où $\mathcal{F}$ est un fibré vectoriel sur $X_{0}$ et 
$$
g\colon \text{pr}_{1}^{(1) *}(\mathcal{F})\simeq \text{pr}_{2}^{(1) *}(\mathcal{F}),
$$
tel que 
$$
\text{pr}_{1}^{(2) *}(g)\circ \text{pr}_{2}^{(2) *}(g)=\alpha \cdot \text{pr}_{3}^{(2) *}(g),
$$
c.f. \cite[Def. 2.1.3.1]{LieblichPhD}. D'après \cite[Prop. 2.1.3.11.]{LieblichPhD}, les catégories des faisceaux tordus localement libres comme dans la définition Déf. \ref{Def getwistete Garben} et les faisceau tordus localement libres au sens de Caldararu sont équivalentes (ici on utilise que le résultat de Lieblich marche dans la généralité d'un topos annelé et que dans l'équivalence qu'il a démontrée les faisceaux localement libres sont identifiés sur les deux côtés).\footnote{La construction du foncteur entre les deux version des faisceaux tordus n'est pas si triviale: d'apres \cite[Prop. 2.1.3.6.]{LieblichPhD} - c'est l'ingrédient non-trivial - un trouve une section $x\colon U_{0}\rightarrow \mathfrak{X},$ tel que les deux tirés en arrières $x_{1},x_{2}\colon U_{1}\rightarrow \mathfrak{X}$ sont isomorphes. Si on fixe un tel isomorphisme $\varphi\colon x_{0}\simeq x_{1},$ on a $\delta(\varphi)=a.$ Si maintenant $\mathcal{E}$ est un faisceau tordu localement libre sur $\mathfrak{X}$ comme défini avant, on peut considérer $\mathcal{F}:=x^{*}(\mathcal{E}).$ Par définition, l'isomorphisme $\varphi$ donne un isomorphisme $\psi\colon \mathcal{F}_{1}\simeq \mathcal{F}_{0}.$ Le cobord est donné par multiplication avec $a,$ parce qu'on a exigé dans la définition d'un faisceau tordu que l'action intertiale coïncide avec la multiplication par les scalaires.} 
\\
En utilisant cette interprétation, il est facile de démontrer que l'on peut recoller les faisceaux tordus localement libres comme cherché: la classe de cohomologie étale $[\mathfrak{X}]\!\mid_{U}\in H^{2}(U,\mathbb{G}_{m}^{\text{ad}})$ (resp. $[\mathfrak{X}]\!\mid_{V}\in H^{2}(V,\mathbb{G}_{m}^{\text{ad}})$ resp. $[\mathfrak{X}]\mid _{U\cap V}\in H^{2}(U\cap V,\mathbb{G}_{m}^{\text{ad}}))$ est représentée par l'hyper-recouvrement augmenté $X_{\bullet}\times_{X} U\rightarrow U$ (resp. pour $V$ resp. $U\cap V$) et la section $\alpha\!\mid_{X_{2}\times_{X}U}\in H^{0}(X_{2}\times_{X}U,\mathbb{G}_{m}^{\text{ad}})$ (resp. $\alpha\!\mid_{X_{2}\times_{X} V}$ resp. $\alpha\!\mid_{X_{2}\times_{X}U\cap V}$).

Étant donné des faisceaux tordus localements libres $\mathcal{P}$ sur $\mathfrak{X}\times_{X}U$ resp. $\mathfrak{X}\times_{X}V;$ ils correspondent à des faisceaux tordus au sens de Caldarau, i.e. à des paires $(\mathcal{F}_{U},g)$ resp. $(\mathcal{F}_{V},h)$ avec un isomorphisme sur $U\cap V.$ Comme $X_{2}$ admet un recouvrement analytique par $X_{2}\times_{X}U$ et $X_{2}\times_{X}V$ avec intersection $X_{2}\times_{X}U\cap V,$ on peut recoller les faisceaux localement libres $\mathcal{P}$ resp. $\mathcal{Q}$ le long de l'isomorphisme donné sur $U\cap V.$
\\
Pour finir la preuve, on aborde la preuve du point (d). L'existence du $1$-morphisme $F\colon \mathfrak{X}_{c}\rightarrow \mathfrak{X}_{c^{\prime}}$ est expliqué dans \cite[Lemma 2.1.2.3]{LieblichPhD}. En utilisant le point de vue des faisceaux tordus au sens de Caldararu, le fait que $F$ induit une équivalence entre faisceaux tordus localement libres devient une trivialité.
\end{proof}
Le lemme suivant est maintenant sans surprise.
\begin{Lemma_french}
Soit $c\in \Br^{\prime}(X)$ et $\mathfrak{X}_{c}\rightarrow X$ une $\mathbb{G}_{m}^{\text{ad}}$-gerbe sur $X,$ représentant de la classe $c.$ Alors $c$ est contenue dans $\Br(X)$ si et seulement s'il existe sur $\mathfrak{X}_{c}$ un faisceau tordu, non-nul, localement libre.
\end{Lemma_french}
\begin{proof}
Si $c=[\mathcal{A}]$ on a $\mathfrak{X}_{c}\simeq \text{Triv}(\mathcal{A})$ et sur $\text{Triv}(\mathcal{A})$ on a le faisceau tordu, non-nul, localement libre $\mathcal{W}^{\text{uni}}$ défini juste après la remarque \ref{Bemerkung nach Def zu getwisteten Garben}. Si $\mathcal{E}$ est un faisceau tordu, non-nul, localement libre sur $\mathfrak{X}_{c},$ alors $\mathcal{E}nd(\mathcal{E})$ est d'après le lemme \ref{Lemma zu getwisteten Garben} (a), (b) équivalent à une $\mathcal{O}_{X}$-algèbre avec $\mathcal{O}_{X}$-module sous-jacent localement libre. Comme la gerbe est étale localement scindée, c'est donc bien une algèbre d'Azumaya sur $X.$
\end{proof}
\subsubsection{Le théorème de Gabber dans le cadre analytique}\label{subsubsection Gabbers satz fuer adische Raueme}
On rappelle d'abord que Gabber a démontré dans sa thèse le résultat suivant:
\begin{Theorem_french}{(Gabber)}\label{Gabbers satz ueber Brauergruppen}
\\
Soit $X$ un schéma séparé\footnote{C'est une hypothèse nécessaire! Un contre-exemple a été donné par Edidin, Hassett, Kresch, Vistoli \cite[Corollary 3.11]{BrauergroupQuotientStacks}.}, qu'on peut écrire
$$
X=U\cup V,
$$
où $U,$ $V$ sont des schémas affines. Alors on a
$$
\Br(X)\simeq \Br^{\prime}(X).
$$
\end{Theorem_french}
La preuve est donnée dans \cite[Chapter II, Theorem 1]{GabberAzumayaAlgebras}. Le plus important, dans cette preuve, est le résultat de recollement suivant (\cite[Lemma 2, Part II]{GabberAzumayaAlgebras} ): soit $c\in \Br^{\prime}(X)$ une classe de cohomologie et soient $\mathcal{A}_{U}$ et $\mathcal{A}_{V}$ sur $U$ resp. $V$ des algèbres d'Azumaya, telles que
$$
c\!\mid_{U}=[\mathcal{A}_{U}] \text{ et } c\!\mid_{V}=[\mathcal{A}_{V}].
$$
Alors on peut trouver une algèbre d'Azumaya $\mathcal{A}$ sur $X,$ telle que $c=[\mathcal{A}].$ Lieblich a donné dans sa thèse \cite{LieblichPhD} une preuve de ce résultat en utilisant des faisceaux tordus: ici on a changé de perspective; au lieu d'essayer recoller des algèbres d'Azumaya (modulo équivalence de Brauer) on veut plutôt recoller des faisceaux tordus (sur des gerbes).
\\
On suit cette approche (i.e. la preuve de la proposition \cite[Prop. 3.1.4.5.]{LieblichTwistedSheavesCompositio}) pour démontrer le résultat suivant:
\begin{proposition}\label{Gabbers resultat fuer adische Raueme}
Soit $X$ un espace adique, fortement noethérien sur $\Spa(K,\mathcal{O}_{K}),$ où $K$ est un corps non archimédien de caractéristique $0,$ tel que
$$
X=U\cup V,
$$
où $U,$ $V$ sont des affinoïdes et $U\cap V$ est toujours affinoïde. Alors on a 
$$
\Br(X)\simeq \Br^{\prime}(X).
$$
\end{proposition}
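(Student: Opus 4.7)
Mon plan est de suivre la stratégie de Lieblich \cite[Prop. 3.1.4.5.]{LieblichTwistedSheavesCompositio} en l'adaptant au cadre adique. On dispose déjà de l'injection $\Br(X)\hookrightarrow \Br^{\prime}(X),$ il s'agit donc d'établir la surjectivité. Soit $c\in \Br^{\prime}(X),$ de torsion disons $n.$ Comme $\car(K)=0,$ la suite exacte de Kummer sur le petit site étale fournit la surjection $H^{2}(X,\mu_{n}^{\text{ad}})\twoheadrightarrow H^{2}(X,\mathbb{G}_{m}^{\text{ad}})[n],$ donc un relèvement $\tilde{c}\in H^{2}(X,\mu_{n}^{\text{ad}})$ de $c,$ représenté (théorème de Giraud) par une $\mu_{n}^{\text{ad}}$-gerbe $\mathfrak{X}_{\tilde{c}}\rightarrow X.$ Le point (d) du lemme \ref{Lemma zu getwisteten Garben} ramène alors la surjectivité cherchée à l'existence d'un faisceau $n$-tordu localement libre non-nul sur $\mathfrak{X}_{\tilde{c}}.$

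J'applique ensuite le cas affinoïde déjà acquis (Lemme \ref{Gabbers theorem affinoide adische Raeume}) aux classes $c|_{U}$ et $c|_{V}$: on obtient des faisceaux tordus localement libres non-nuls $\mathcal{P}$ sur $\mathfrak{X}_{\tilde{c}}\times_{X}U$ et $\mathcal{Q}$ sur $\mathfrak{X}_{\tilde{c}}\times_{X}V.$ Sur l'intersection, le faisceau $\mathcal{H}om(\mathcal{P}|_{U\cap V},\mathcal{Q}|_{U\cap V})$ est $0$-tordu et localement libre; il correspond donc par le lemme \ref{Lemma zu getwisteten Garben}(b) à un fibré vectoriel $\mathcal{L}$ sur l'affinoïde $U\cap V.$ Le plan est de rectifier $\mathcal{Q}$ pour que sa restriction à $U\cap V$ devienne isomorphe à $\mathcal{P}|_{U\cap V};$ une fois cette identification obtenue, le lemme \ref{Lemma zu getwisteten Garben}(c) fournit un recollement en un faisceau tordu localement libre non-nul sur $\mathfrak{X}_{\tilde{c}}$ tout entière, d'où l'existence d'une algèbre d'Azumaya représentant $c.$

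L'obstacle essentiel est donc de prolonger $\mathcal{L}$ (ou un avatar stabilisé) en un fibré vectoriel $\widetilde{\mathcal{L}}$ sur l'affinoïde $V,$ afin que le tordu $\mathcal{Q}\otimes \pi^{*}(\widetilde{\mathcal{L}})^{\vee}$ sur $\mathfrak{X}_{\tilde{c}}\times_{X}V$ fournisse le faisceau rectifié cherché. Suivant l'astuce de Gabber revisitée par Lieblich, l'idée est d'étendre d'abord $\mathcal{L}$ en un faisceau cohérent sur $V$ (grâce à la noethérianité forte et à la quasi-compacité de l'immersion $U\cap V\hookrightarrow V$), puis de forcer la locale liberté en stabilisant par somme directe avec un facteur libre suffisamment grand: on extrait une surjection $\mathcal{O}_{V}^{N}\twoheadrightarrow \widetilde{\mathcal{L}}$ dont le noyau, après stabilisation, est à son tour localement libre. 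Le pont entre les mondes adique et schématique qui rend cet argument disponible est l'équivalence GAGA pour les fibrés vectoriels sur les affinoïdes fortement noethériens \cite[Thm. 2.7.7.]{KedlayaLiuFoundations}, qui ramène chaque étape cohérente au cas d'un spectre affine où l'argument original de Lieblich s'applique. Le formalisme des faisceaux tordus au sens de Caldararu (invoqué dans la preuve du lemme \ref{Lemma zu getwisteten Garben}(c)) permet alors de transporter ces constructions cohérentes en constructions tordues sur la gerbe $\mathfrak{X}_{\tilde{c}},$ l'hypothèse cruciale étant que $U,$ $V$ et $U\cap V$ sont tous simultanément affinoïdes.
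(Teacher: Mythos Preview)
Ton cadre est le bon et coïncide avec celui du papier~: réduire via Kummer à l'existence d'un faisceau tordu localement libre non nul sur une $\mu_{n}^{\text{ad}}$-gerbe, invoquer le cas affinoïde pour obtenir $\mathcal{P}$ sur $\mathfrak{X}|_{U}$ et $\mathcal{Q}$ sur $\mathfrak{X}|_{V}$, puis recoller. Mais l'étape de recollement que tu proposes ne fonctionne pas.

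Le point précis qui échoue est ta formule de rectification. Si $\mathcal{L}=\mathcal{H}om(\mathcal{P}|_{U\cap V},\mathcal{Q}|_{U\cap V})$ et $\widetilde{\mathcal{L}}$ en est un prolongement à $V$, alors sur $U\cap V$ on a
\[
\mathcal{Q}\otimes\widetilde{\mathcal{L}}^{\vee}\big|_{U\cap V}\simeq \mathcal{Q}\otimes\mathcal{Q}^{\vee}\otimes\mathcal{P}\big|_{U\cap V}\simeq \mathcal{E}nd(\mathcal{Q})\otimes\mathcal{P}\big|_{U\cap V},
\]
qui n'est isomorphe à $\mathcal{P}|_{U\cap V}$ que si $\mathcal{Q}$ est de rang $1$. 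Ton raisonnement traite $\mathcal{L}$ comme un fibré en droites mesurant la différence, ce qui n'est pas le cas en rang supérieur. De plus, ta description de ``l'astuce de Gabber revisitée par Lieblich'' (étendre en cohérent, prendre une surjection $\mathcal{O}_{V}^{N}\twoheadrightarrow\widetilde{\mathcal{L}}$, forcer la locale liberté du noyau) ne correspond pas à l'argument de \cite[Prop.~3.1.4.5]{LieblichTwistedSheavesCompositio} et ne produit pas la donnée de recollement voulue.

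L'argument effectif du papier (suivant Lieblich) est de nature K-théorique et n'invoque aucun prolongement cohérent. On commence par égaliser les rangs de $\mathcal{P}$ et $\mathcal{Q}$, puis on exploite le fait que $\mathcal{P}^{\otimes n}$ et $\mathcal{Q}^{\otimes n}$ sont $0$-tordus, donc des modules projectifs sur les affinoïdes. Un lemme K-théorique élémentaire (le noyau de l'application rang dans $K_{0}$ est un nil-idéal) permet, quitte à tordre $\mathcal{P}$ et $\mathcal{Q}$ par des modules projectifs convenables, de supposer $\mathcal{P}^{\otimes n}$ et $\mathcal{Q}^{\otimes n}$ \emph{libres}. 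Alors les modules $0$-tordus $P=\mathcal{Q}^{\vee}\otimes\mathcal{Q}$ et $Q=\mathcal{P}\otimes\mathcal{Q}^{\vee}$ sur $U\cap V$ ont des puissances $n$-ièmes libres, donc (même lemme K-théorique) $P^{\oplus m}$ et $Q^{\oplus m}$ sont libres pour $m\gg 0$. L'isomorphisme tautologique $\mathcal{P}\otimes P\simeq\mathcal{Q}\otimes Q$ sur $U\cap V$ devient ainsi $\mathcal{P}^{\oplus M}|_{U\cap V}\simeq\mathcal{Q}^{\oplus M}|_{U\cap V}$, et l'on recolle. C'est ce mécanisme (rendre libre par stabilisation via la K-théorie, et non prolonger en cohérent) qui manque à ta proposition.
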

Maintenant on se tourne vers la preuve de la proposition \ref{Gabbers resultat fuer adische Raueme}. D'abord, on observe qu'il suffit de démontrer le lemme suivant:
\begin{Lemma_french}\label{Reduktionslemma zum Gabber resultat fuer adische Raueme}
Soit $\mathfrak{X}\rightarrow X$ une $\mu_{n}^{\text{ad}}$-gerbe sur $X.$ Sur $\mathfrak{X}$ il existe un faisceau tordu, non-nul, localement libre de rang constant.
\end{Lemma_french}
De fait, soit $c\in \Br^{\prime}(X)$ une classe de cohomologie; elle correspond à une $\mathbb{G}_{m}^{\text{ad}}$-gerbe $\mathfrak{X}_{c}\rightarrow X$ sur $X.$ Par la suite exacte de Kummer, on peut relever la classe $c$ à une classe $c^{\prime}\in H^{2}(X,\mu_{n}^{\text{ad}}),$ pour un $n\in \mathbb{N}.$ Celle-ci correspond à une $\mu_{n}^{\text{ad}}$-gerbe $\mathfrak{X}_{c^{\prime}}\rightarrow X.$ D'après le lemme \ref{Lemma zu getwisteten Garben},(d), on a une équivalence entre faisceaux tordus localement libres sur $\mathfrak{X}_{c}$ et $\mathfrak{X}_{c^{\prime}}.$ On peut déduire du lemme \ref{Reduktionslemma zum Gabber resultat fuer adische Raueme} ci-dessus l'existence d'un faisceau tordu, non-nul, localement libre de rang constant sur $\mathfrak{X}_{c};$ en passant à l'algèbre des endomorphismes on en déduit l'existence d'une algèbre d'Azumaya $\mathcal{A}$ sur $X,$ tel que $c=[\mathcal{A}].$
\\
Il suffit alors de démontrer le lemme \ref{Reduktionslemma zum Gabber resultat fuer adische Raueme} ci-dessus.
\\
Avant d'attaquer la proposition \ref{Gabbers resultat fuer adische Raueme} directement, on va d'abord traiter le cas affinoïde; ici on utilise de fait le résultat de Gabber ci-dessus comme ingrédient. 
\begin{Lemma_french}\label{Gabbers theorem affinoide adische Raeume}
Soit $\Spa(A,A^{+})$ un espace adique affinoïde, fortement noethérien sur $\Spa(K,\mathcal{O}_{K}).$ Alors on a
$$
\Br(\Spa(A,A^{+}))\simeq \Br^{\prime}(\Spa(A,A^{+})).
$$
\end{Lemma_french}
\begin{proof}
On observe d'abord qu'il y a le diagramme commutatif suivant
$$
\xymatrix{
\Br(\Spa(A,A^{+})) \ar[r] & \Br^{\prime}(\Spa(A,A^{+})) \\
\Br(\Spec(A)) \ar[u] \ar[r] & \Br^{\prime}(\Spec(A)) \ar[u]
}
$$
et il faut démontrer que la flèche horizontale en haut est un isomorphisme: par le résultat de Gabber en haut (Théorème \ref{Gabbers satz ueber Brauergruppen}) la flèche $\Br(\Spec(A))\rightarrow \Br^{\prime}(\Spec(A))$ est un isomorphisme et d'après GAGA, lemme \ref{GAGA fuer die Brauergruppe}, la flèche $\Br(\Spec(A))\rightarrow \Br(\Spa(A,A^{+}))$ est un isomorphisme. Il suffit alors de démontrer que $\Br^{\prime}(\Spec(A))\simeq \Br^{\prime}(\Spa(A,A^{+})).$
Pour cela, on rappelle le résultat de comparaison crucial de Huber: comme le faisceau $\mu_{n}^{\text{ad}}$ est par définition le tiré en arrière du faisceau $\mu_{n}$ le long du morphisme $\Spa(A,A^{+})\rightarrow \Spec(A)$ et l'anneau de Tate $A$ est par hypothèse complet, on a
$$
H^{i}(\Spa(A,A^{+}),\,\mu_{n}^{\text{ad}})\simeq H^{i}(\Spec(A),\,\mu_{n})
$$
par \cite[Thm. 3.2.9]{HuberBuch}.\footnote{Dans l'application cherchée on a seulement besoin de l'énoncé du lemme dans le cas où $K$ est de plus algébriquement clos, on peut donc trivialiser $\mu_{n}$ et utiliser directement le résultat comme énoncé par Huber dans \cite[Corollary 3.2.2.]{HuberBuch}.} Maintenant on peut comparer les deux suites exactes de Kummer et en déduire un diagramme commutatif
$$
\xymatrix{
0 \ar[r] & \Pic(\Spec(A))_{[n]} \ar[r] \ar[d] & H^{2}(\Spec(A),\,\mu_{n}) \ar[r] \ar[d] & \Br^{\prime}(\Spec(A))[n]\ar[d] \ar[r] & 0 \\
0 \ar[r] & \Pic(\Spa(A,A^{+}))_{[n]} \ar[r] & H^{2}(\Spa(A,A^{+}),\, \mu_{n}^{ad}) \ar[r] & \Br^{\prime}(\Spa(A,A^{+}))[n] \ar[r] & 0,
}
$$
pour tout $n.$ En utilisant le résultat de Kedlaya-Liu \cite[Thm. 2.7.7.]{KedlayaLiuFoundations}), on voit que
$$
\Pic(\Spec(A))_{[n]}\simeq \Pic(\Spa(A,A^{+}))_{[n]}.
$$
Finalement, on a
$$
\Br^{\prime}(\Spec(A))\simeq \Br^{\prime}(\Spa(A,A^{+})),
$$
d'où le lemme.
\end{proof}
Maintenant on observe le corollaire suivant du lemme \ref{Gabbers theorem affinoide adische Raeume}, qui concerne encore le cas affinoïde:
\begin{Corollary_french}\label{Reduktionslemme zum Gabber resultat fuer adische Raueme affinoid}
Soit $\Spa(A,A^{+})$ un espace adique affinoïde, fortement noethérien sur $\Spa(K,\mathcal{O}_{K})$ et soit $\mathfrak{X}\rightarrow \Spa(A,A^{+})$ une $\mu_{n}^{\text{ad}}$-gerbe. Alors il existe un faisceau tordu, non-nul, localement libre et de rang constant sur $\mathfrak{X}.$ 
\end{Corollary_french}
\begin{proof}
Ici on utilise simplement 'l'invariance sous le groupe' pour des faisceaux tordus localement libres, \ref{Lemma zu getwisteten Garben}, (d), et le lemme \ref{Gabbers theorem affinoide adische Raeume} qu'on vient de démontrer.
\end{proof}
Le cas affinoïde compris, on tourne vers la preuve du lemme \ref{Reduktionslemma zum Gabber resultat fuer adische Raueme}.  A partir de maintenant, on suit la preuve donné par Lieblich dans \cite[Prop.3.1.4.5.]{LieblichTwistedSheavesCompositio} presque mot pour mot. Comme lui, on a besoin d'une petite injection de la K-théorie:
\begin{Lemma_french}\label{Elementares Lemma K-Theorie}\footnote{Même si ce lemme a l'air élémentaire, pour la preuve on doit utiliser des résultats de K-théorie; notamment le fait que le noyau de l'application 'rang' est un nil-idéal.}
Soit $R$ un anneau commutatif avec unité.
\begin{enumerate}
\item[(a):] Soit $P\neq 0$ un $R$-module projectif, de rang constant. Si $P^{\otimes n}$ est libre pour un $n\neq 0 \in \mathbb{N},$ alors il existe $m\neq 0 \in \mathbb{N},$ tel que $P^{\oplus m}$ est libre.
\item[(b):] Soit $P\neq 0$ de nouveau un $R$-module projectif, de rang constant et $n\in \mathbb{N}$ donné. Alors on peut trouver des $R$-modules non-nuls, finis libres $F_{0},F_{1}$ et un $R$-module projectif, de rang constant $\overline{P}\neq 0,$ tels que
$$
P\otimes_{R}\otimes \overline{P}^{\otimes n}\otimes_{R}F_{0}\simeq F_{1}.
$$
\end{enumerate}
\end{Lemma_french}
Pour la preuve c.f. \cite[Lemma 3.1.4.3 et Corollary 3.1.4.4]{LieblichTwistedSheavesCompositio}.
\\
Finalement, on peut commencer la preuve du lemme \ref{Reduktionslemma zum Gabber resultat fuer adische Raueme}: soit $\mathfrak{X}\rightarrow X$ une $\mu_{n}^{\text{ad}}$-gerbe sur $X.$ On note $U=\Spa(A_{1},A_{1}^{+}),$ $V=\Spa(A_{2},A_{2}^{+})$ et $U\cap V=\Spa(A_{3},A_{3}^{+}).$ On peut prendrer la restriction de la gerbe donnée et considérer $\mathfrak{X}\!\mid_{U}\rightarrow U$ et $\mathfrak{X}\!\mid_{V}\rightarrow V.$ D'après le corollaire \ref{Reduktionslemme zum Gabber resultat fuer adische Raueme affinoid}, on peut trouver des faisceaux tordus, non-nuls, localement libres de rangs constants $\mathcal{P}$ sur $\mathfrak{X}\!\mid_{V}$ et $\mathcal{Q}$ sur $\mathfrak{X}\!\mid_{U}.$ L'objectif est bien sûr de les recoller pour construire le faisceau tordu sur $\mathfrak{X}\rightarrow X.$
\\
En considérant des sommes directes de copies de $\mathcal{P}$ resp. $\mathcal{Q},$ on peut supposer que $\text{rg}(\mathcal{P})=\text{rg}(\mathcal{Q}).$ Comme ces sont des faisceaux tordus sur des $\mu_{n}^{\text{ad}}$-gerbes, $\mathcal{P}^{\otimes n}$ resp. $\mathcal{Q}^{\otimes n}$ sont $0$-tordus; d'après le lemme \ref{Lemma zu getwisteten Garben},(b), $\mathcal{P}^{\otimes n}$ (resp. $\mathcal{Q}^{\otimes n})$ correspond donc à un $A_{1}$-module (resp. $A_{2}$-module) non-nul, fini projectif de rang constant. On se réduit d'abord au cas où ce sont des modules libres.
\\
Par lemme \ref{Elementares Lemma K-Theorie}, (b), on peut trouver des $A_{1}$-modules finis libres $F_{0},F_{1}$ (resp. des $A_{2}$-modules finis libres $G_{0},G_{1}$) et un $A_{1}$-module $\overline{P}\neq 0$ fini projectif de rang constant (resp. un $A_{2}$-module $\overline{Q}\neq 0$ fini projectif de rang constant), tel qu'on a
\begin{equation}
\mathcal{P}^{\otimes n}\otimes_{A_{1}}\overline{P}^{\otimes n} \otimes_{A_{1}} F_{0} \simeq F_{1}
\end{equation}
(resp. 
\begin{equation}
\mathcal{Q}^{\otimes n}\otimes_{A_{2}} \overline{Q}^{\otimes n}\otimes_{A_{2}}G_{0} \simeq G_{1}.)
\end{equation}
On considère maintenant les $A_{1}$-modules $F_{0},F_{1},\overline{P}$ (resp. les $A_{2}$-modules $G_{0},G_{1},\overline{Q}$) comme des fibrés vectoriels sur $U$ (resp. $V$); en utilisant le lemme \ref{Lemma zu getwisteten Garben},(b), on peut les considérer aussi comme des faisceaux localement libres $0$-tordus sur la gerbe $\mathfrak{X}\!\mid_{U}$ (resp. $\mathfrak{X}\!\mid_{V}$). D'après le lemme \ref{Lemma zu getwisteten Garben}, (a), on peut remplacer le faisceau tordu $\mathcal{P}$ (resp. $\mathcal{Q}$) par le faisceau tordu $\mathcal{P} \otimes \overline{P} \otimes F_{0}$ (resp. $\mathcal{Q}\otimes \overline{Q} \otimes G_{0}$); ces faisceaux tordus vérifient bien que leur $n$-ième puissance tensorielle soit finie libre.
\\
Maintenant on veut expliquer pourquoi on peut recoller ces faisceaux tordus: sur l'intersection $U\cap V$ on peut considérer les faisceaux $1-1=0$-tordus suivants:
$$
P:=\mathcal{Q}^{\vee}\!\mid_{U\cap V}\otimes \mathcal{Q}\!\mid_{U\cap V},
$$
et
$$
Q:=\mathcal{P}\!\mid_{U\cap V}\otimes \mathcal{Q}^{\vee}\!\mid_{U\cap V}.
$$
Ici on note par $(.)\!\mid_{U\cap V}$ le tiré en arrière des faisceaux tordus localement libres le long du morphisme de $\mu_{n}^{\text{ad}}$-gerbes $\mathfrak{X}\!\mid_{U\cap V}\rightarrow \mathfrak{X}\!\mid_{U}$ resp. $\mathfrak{X}\!\mid_{U\cap V}\rightarrow \mathfrak{X}\!\mid_{V}.$ Trivialement, on a un isomorphisme entre faisceaux tordus
\begin{equation}\label{Gleichung Isomorphismus zum Verkleben der getwisteten Garben}
\mathcal{P}\!\mid_{U\cap V}\otimes P \simeq \mathcal{Q}\!\mid_{U\cap V}\otimes Q.
\end{equation}
On l'utiliser pour construire la donnée de recollement: on observe d'abord qu'après la réduction qu'on vient d'effectuer, on est dans une situation où les $A_{3}$-modules projectifs $P^{\otimes n}$ et $Q^{\otimes n}$ sont en effet libres. D'après le lemme \ref{Elementares Lemma K-Theorie} (a), il résulte qu'on peut trouver un $m>>0,$ tel que $P^{\oplus m}$ et $Q^{\oplus m}$ sont libres. L'isomorphisme (\ref{Gleichung Isomorphismus zum Verkleben der getwisteten Garben}), implique un isomorphisme
$$
\mathcal{P}\!\mid_{U\cap V}\otimes P^{\oplus m} \simeq \mathcal{Q}\!\mid_{U\cap V}\otimes Q^{\oplus m},
$$
i.e. pour un $M\geq 0,$ on a
$$
\mathcal{P}^{\oplus M}\!\mid_{U\cap V}\simeq \mathcal{Q}^{\oplus M}\!\mid_{U\cap V}.
$$
Comme on sait d'après le lemme \ref{Lemma zu getwisteten Garben},(c), que l'on peut recoller les faisceaux tordus le long du recouvrement $X=U\cup V,$ on peut appliquer ce fait aux faisceaux tordus $\mathcal{P}^{\oplus M}$ et $\mathcal{Q}^{\oplus M}$ en utilisant l'isomorphisme qu'on vient de construire. Cela permet de finir la preuve du lemme \ref{Reduktionslemma zum Gabber resultat fuer adische Raueme} et donc finalement aussi la preuve de la proposition \ref{Gabbers resultat fuer adische Raueme}.
\subsubsection{L'annulation implique la comparaison: Preuve}\label{subsubsection Verschwinden impliziert Vergleich Beweise}
\begin{proposition}\label{Vergleich in Graden 0,1,2 verzweigte Ueberdeckungen}
Soit $\ell$ un nombre premier et $\pi\colon X^{\prime}\rightarrow X^{\text{alg}}_{E,F}$ un revêtement ramifié. Le morphisme de comparaison induit un isomorphisme
$$
H^{i}(X^{\prime},\,\mathbb{F}_{\ell})\simeq H^{i}(X^{\prime \text{ad}},\,\mathbb{F}_{\ell}),
$$
pour $i=0,1,2.$
\end{proposition}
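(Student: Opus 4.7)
Le plan consiste à traiter séparément les degrés $i=0,1,2$, en s'appuyant sur la suite exacte de Kummer pour $i=1,2$.

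Pour $i=0$ : l'équivalence de GAGA pour les revêtements ramifiés (proposition \ref{GAGA verzweigte Sachen}) appliquée au faisceau structurel identifie les idempotents de $H^0(X',\mathcal{O}_{X'})$ à ceux de $H^0(X'^{\text{ad}},\mathcal{O}_{X'^{\text{ad}}})$, d'où $\pi_0(X')\simeq \pi_0(X'^{\text{ad}})$, et donc $H^0(X',\mathbb{F}_\ell)\simeq H^0(X'^{\text{ad}},\mathbb{F}_\ell)$.

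Pour $i=1,2$ : je traiterais d'abord le cas des coefficients $\mu_\ell$. La suite exacte de Kummer, exacte sur les deux sites étales puisque on est en caractéristique $0$, fournit les suites exactes courtes
\begin{align*}
0 &\to H^0(X',\mathcal{O}_{X'}^\times)/\ell \to H^1(X',\mu_\ell) \to \Pic(X')[\ell] \to 0,\\
0 &\to \Pic(X')/\ell \to H^2(X',\mu_\ell) \to \Br^{\prime}(X')[\ell] \to 0,
\end{align*}
et leurs analogues pour $X'^{\text{ad}}$. Les termes $H^0(\mathcal{O}^\times)$ et $\Pic$ s'identifient par GAGA (proposition \ref{GAGA verzweigte Sachen}). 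Pour $\Br^{\prime}$, j'utiliserais que $X'$ est recouvert par deux affines (images réciproques du recouvrement $X^{\text{alg}}_{E,F}=D_+(t_1)\cup D_+(t_2)$), donc le théorème de Gabber \ref{Gabbers satz ueber Brauergruppen} donne $\Br(X')=\Br^{\prime}(X')$ ; et que $X'^{\text{ad}}$ est recouvert par deux affinoïdes à intersection affinoïde, donc la proposition \ref{Gabbers resultat fuer adische Raueme} donne $\Br(X'^{\text{ad}})=\Br^{\prime}(X'^{\text{ad}})$. Combiné avec GAGA pour le groupe de Brauer (lemme \ref{GAGA fuer die Brauergruppe}), ceci fournit $H^i(X',\mu_\ell)\simeq H^i(X'^{\text{ad}},\mu_\ell^{\text{ad}})$ pour $i=1,2$ par chasse au diagramme.

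Pour passer de $\mu_\ell$ à $\mathbb{F}_\ell$ : ces deux faisceaux diffèrent par le twist cyclotomique $\chi\colon \Gal(\bar E/E)\to \mathbb{F}_\ell^\times$, d'ordre divisant $\ell-1$, donc premier à $\ell$. Soit $L=E(\mu_\ell)$ et $\tilde X=X'\times_E L$ ; alors $\tilde X\to X'$ est fini étale de degré premier à $\ell$, et $\tilde X\to X^{\text{alg}}_{E,F}$ reste un revêtement ramifié (finitude et platitude par composition, régularité de $\tilde X$ puisque $\tilde X\to X'$ est étale). Comme $\mu_\ell\!\mid_{\tilde X}\simeq \mathbb{F}_\ell\!\mid_{\tilde X}$, l'étape précédente appliquée à $\tilde X$ donne la comparaison pour $\mathbb{F}_\ell$ sur $\tilde X$, et la méthode de la trace (valable car $[L:E]$ est premier à $\ell$) la transporte à $X'$. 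La principale difficulté est donc le cas $i=2$, qui repose de manière essentielle sur l'analogue adique du théorème de Gabber établi dans la sous-section précédente grâce à la machinerie des faisceaux tordus sur des gerbes.
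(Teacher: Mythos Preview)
Your argument is correct and close to the paper's, with one genuine difference worth noting. For $i=0$ you do exactly what the paper does. For $i=2$ and the descent from $\mu_\ell$ to $\mathbb{F}_\ell$, your route is essentially the paper's: the paper also base-changes to an extension $E'$ containing $\mu_\ell$ and compares the two Hochschild--Serre spectral sequences; since one may take $E'=E(\mu_\ell)$ with $[E':E]\mid \ell-1$ prime to $\ell$, the spectral sequences degenerate and this is exactly your trace argument. (Your explicit invocation of Gabber's theorem on the algebraic side for $\Br(X')=\Br'(X')$ is in fact a point the paper leaves implicit.)

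The real divergence is at $i=1$. You handle it uniformly via Kummer, reading off $H^1(-,\mu_\ell)$ from $H^0(\mathcal{O}^\times)/\ell$ and $\Pic[\ell]$, both controlled by GAGA. The paper instead proves directly that $u$ induces an equivalence between finite étale covers of $X'$ and of $X'^{\text{ad}}$: GAGA for vector bundles gives an equivalence on finite locally free covers, and one checks that étaleness matches by comparing the sheaves of differentials $\Omega_{Y/X'}$ and $\Omega_{Y^{\text{ad}}/X'^{\text{ad}}}$ at closed points, using that $\mathcal{O}_{X',x}\to\mathcal{O}_{X'^{\text{ad}},x^{\text{ad}}}$ is faithfully flat. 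Your approach is shorter and more symmetric with the $i=2$ case; the paper's approach yields the stronger statement $\pi_1^{\text{\'et}}(X')\simeq\pi_1^{\text{\'et}}(X'^{\text{ad}})$, which is of independent interest and does not require singling out a prime $\ell$.
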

\begin{proof}
On commence par le cas $i=0:$ ici on a $H^{0}(X^{\prime},\,\mathbb{F}_{\ell})=(\mathbb{F}_{\ell})^{\pi_{0}(X^{\prime})}$ et $H^{0}(X^{\prime \text{ad}},\,\mathbb{F}_{\ell})=(\mathbb{F}_{\ell})^{\pi_{0}(X^{\prime \text{ad}})}.$ Il faut donc expliquer pourquoi il existe une bijection entre les composantes connexes de $X^{\prime}$ et $X^{\prime \text{ad}}.$ Comme le schéma $X^{\prime}$ est noethérien, les composantes connexes sont en bijection avec les idempotents indécomposables dans $H^{0}(X^{\prime},\mathcal{O}_{X^{\prime}}).$ De plus, comme l'espace adique $X^{\prime \text{ad}}$ est localement connexe, il en résulte qu'il y a aussi une bijection entre les composantes connexes de $X^{\prime \text{ad}}$ et les idempotentes indécomposables dans $H^{0}(X^{\prime \text{ad}},\mathcal{O}_{X^{\prime \text{ad}}}).$ Finalement, on peut appliquer GAGA pour la courbe et le fait que les morphismes $\pi$ et $\pi^{\text{ad}}$ sont finis localement libres, pour en déduire la suite des égalités suivantes:
\begin{align*}
H^{0}(X^{\prime},\,\mathcal{O}_{X^{\prime}}) & = H^{0}(X^{\text{alg}}_{E,F},\,\pi_{*}\mathcal{O}_{X^{\text{alg}}_{E,F}}) \\
& \simeq H^{0}(X^{\text{ad}}_{E,F},\,\pi^{\text{ad}}_{*}\mathcal{O}_{X^{\text{ad}} E,F}) \\
& = H^{0}(X^{\prime \text{ad}},\,\mathcal{O}_{X^{\prime \text{ad}}}).
\end{align*}
La bijection entre les idempotents indécomposables en résulte.
\\
Pour le cas $i=1,$ on affirme qu'il existe une équivalence de catégories entre la catégorie des revêtements finis étales de $X^{\prime}$ et de $X^{\prime \text{ad}}.$ Cela suffit pour comparer les $\mathbb{F}_{\ell}$-torseurs sur $X^{\prime}$ et $X^{\prime \text{ad}}.$ On observe d'abord que l'équivalence GAGA pour $X^{\prime}$ resp. $X^{\prime \text{ad}}$ implique une équivalence entre revêtements finis localement libres de $X^{\prime}$ et $X^{\prime \text{ad}};$ il faut donc expliquer pourquoi la propriété d'être fini étale se correspond. Soit $f\colon Y\rightarrow X^{\prime}$ un morphisme fini localement libre et $f^{\text{ad}}\colon Y^{\text{ad}}\rightarrow X^{\prime \text{ad}}$ l'adification associé. Si $f$ est fini étale, le morphisme $f^{\text{ad}}$ l'est aussi. On suppose maintenant que $f^{\text{ad}}$ est fini étale et on démontre que $f$ est aussi fini étale. Comme $f$ est plat, il suffit de démontrer que $\Omega_{Y/X^{\prime}}=0.$ Comme le lieu dans $Y,$ où $f$ est non-ramifié est un ouvert et $Y$ est localement noethérien\footnote{Cela implique qu'un point quelconque se spécialise dans un point fermé \cite[02IL]{stacks}.}, il suffit de démontrer que $\Omega_{Y/X^{\prime}, y_{0}}=0$ pour tout point fermé $y_{0}\in |Y|.$ 
\\
Maintenant, on observe qu'on peut trouver un point maximal $y^{\text{ad}}\in |Y^{\text{ad}}|,$ tel que $y^{\text{ad}}\mapsto y_{0}.$ De fait, $f(y_{0})$ est un point fermé dans $X^{\prime}$ et on trouve un point maximal $x^{\prime \text{ad}}\in |X^{\prime \text{ad}}|,$ tel que $x^{\prime \text{ad}}\mapsto f(y_{0})$ et $x^{\prime \text{ad}}$ s'envoie vers un point classique dans $X^{\text{ad}}_{E,F}.$ Maintenant il faut utiliser qu'il existe une bijection entre la fibre du morphisme $f\colon Y\rightarrow X^{\prime}$ au-dessus du point $f(y_{0})$ et la fibre du morphisme $f^{\text{ad}}\colon Y^{\text{ad}}\rightarrow X^{\prime \text{ad}}$ au-dessus du point $x^{\prime \text{ad}}.$
\\
Pour finir la preuve, il suffit de démontrer que
$$
(\Omega_{Y/X^{\prime}})^{\text{ad}}\simeq \Omega_{Y^{\text{ad}}/X^{\prime \text{ad}}}.
$$
De fait, cet isomorphisme implique
$$
(\Omega_{Y/X^{\prime}})_{y_{0}}\otimes_{\mathcal{O}_{Y,y_{0}}}\mathcal{O}_{Y^{\text{ad}},y^{\text{ad}}}\simeq (\Omega_{Y^{\text{ad}}/X^{\prime \text{ad}}})_{y^{\text{ad}}}=0.
$$
Comme le morphisme $\mathcal{O}_{Y,y_{0}}\rightarrow \mathcal{O}_{Y^{\text{ad}},y^{\text{ad}}}$ est fidèlement plat (c.f. la preuve du lemme \ref{Technisches Lemma GAGA fpqc}), il en résulte que $(\Omega_{Y/X^{\prime}})_{y_{0}}=0.$
\\
Pour démontrer l'isomorphisme ci-dessus, il suffit de le faire localement sur $Y^{\text{ad}}.$ On travaille sur le recouvrement affinoïde suivant: soit $\lbrace \Spec(B_{e^{\prime}_{i}}) \rbrace_{i}$ un recouvrement affine du schéma $X^{\prime}$ et $\lbrace \Spa(B^{\prime}_{i}) \rbrace_{i}$ un recouvrement affinoïde, tel que $B_{e^{\prime}_{i}}\rightarrow B^{\prime}_{i}$ (ici on utilise le recouvrement construit dans la discussion qui a précédé la preuve du lemme \ref{GAGA verzweigte Sachen}). Soit $\lbrace f^{-1}(\Spec(B_{e^{\prime}_{i}})) \rbrace_{i}=\lbrace \Spec(A_{i}) \rbrace_{i},$ un recouvrement de $Y.$ $\lbrace \Spa(A\otimes_{B_{e^{\prime}_{i}}} B^{\prime}_{i}) \rbrace_{i}$ est alors un recouvrement de $Y^{\text{ad}}.$ En utilisant que dans cette situation, on a
$$
\Omega_{\Spa(A\otimes_{B_{e^{\prime}_{i}}} B^{\prime}_{i})/\Spa(B^{\prime}_{i})}\simeq \Omega_{A\otimes_{B_{e^{\prime}_{i}}} B^{\prime}_{i}/B^{\prime}_{i}}
$$
(i.e. la topologie ne joue pas de rôle, c.f. \cite[Construction (1.6.2.),(iii)]{HuberBuch}) on peut bien vérifier l'isomorphisme cherché.
\\
Finalement, pour $i=2,$ on a besoin d'un isomorphisme $\mathbb{F}_{\ell}\simeq \mu_{\ell}.$ Soit donc $E^{\prime}$ une extension galoisienne de $E,$ qui contient une racine primitive $\ell$-ième de l'unité. Il y a deux suites spectrales: 
$$
H^{i}(\Gal(E^{\prime}/E),\,H^{j}(X^{\prime}_{E^{\prime}},\mathbb{F}_{\ell})\Rightarrow H^{i+j}(X^{\prime},\,\mathbb{F}_{\ell})
$$
et
$$
H^{i}(\Gal(E^{\prime}/E),\,H^{j}(X^{\prime ad}_{E^{\prime}},\mathbb{F}_{\ell}))\Rightarrow H^{i+j}(X^{\prime ad},\,\mathbb{F}_{\ell}).
$$
Il suffit donc de démontrer qu'il y a un isomorphisme entre $\Gal(E^{\prime}/E)$-modules 
$$
H^{j}(X^{\prime}_{E^{\prime}},\,\mathbb{F}_{\ell})\simeq H^{j}(X^{\prime \text{ad}}_{E^{\prime}},\,\mathbb{F}_{\ell}),
$$
pour $j=0,1,2.$ D'après ce qu'on a déjà expliqué avant, il suffit d'expliquer le cas $j=2.$ Néanmoins ici on peut utiliser l'identification entre le groupe de Brauer cohomologique et cohérent de $X^{\prime \text{ad}}_{E^{\prime}}$ (proposition \ref{Gabbers resultat fuer adische Raueme}); ce qui implique
$$
\Br^{\prime}(X^{\prime}_{E^{\prime}})\simeq \Br^{\prime}(X^{\prime \text{ad}}_{E^{\prime}}),
$$
GAGA pour $X^{\prime}_{E^{\prime}}$ (proposition \ref{GAGA verzweigte Sachen}, lemme \ref{GAGA fuer die Brauergruppe} (b)) et la suite exacte de Kummer, pour déduire
$$
H^{2}(X^{\prime}_{E^{\prime}},\,\mu_{\ell})\simeq H^{2}(X^{\prime \text{ad}}_{E^{\prime}},\,\mu_{\ell}).
$$
\end{proof}
\begin{Corollary_french}\label{Verschwinden impliziert Vergleich}
Si pour tout revêtement ramifié $X^{\prime}\rightarrow X^{\text{alg}}_{E,F},$ on a
$$
H^{i}(X^{\prime},\,\mathbb{F}_{\ell})=0,
$$
pour $i\geq 3,$ alors le morphisme de comparaison est un isomorphisme, i.e.
$$
H^{i}(X^{\alg}_{E,F},\,\mathcal{F})\simeq H^{i}(X^{\ad}_{E,F},\,\mathcal{F}^{\ad}),
$$
pour $i\geq 0$ et $\mathcal{F}$ un $\mathbb{F}_{\ell}$-module constructible sur le site étale de $X^{\text{alg}}_{E,F}.$ 
\end{Corollary_french}
\begin{proof}
Cela résulte directement du lemme \ref{Reduktion von allgemein konstru zu konstant verzweigte} et le lemme \ref{Vergleich in Graden 0,1,2 verzweigte Ueberdeckungen}, qu'on vient d'expliquer. \footnote{On rappelle qu'ici qu'on est en train d'utiliser des résultats d'annulation qu'on explique un peu plus tard; notamment prop. \ref{l-kohomologische Dimension algebraische Kurve}, prop. \ref{l-torsion adische Kurve}, la remarque \ref{Quatschbemerkung zu l=p und Grad} juste après prop. \ref{Prop Fall l=p}.}
\end{proof}
\begin{Remark_french}
Cette comparaison est aussi vraie pour les adifications des ensembles ouverts $U\subseteq X^{\text{alg}}_{E,F}$ avec coefficients $\mathbb{F}_{\ell},$ où $\ell \neq p,$ i.e. on a
$$
R\Gamma(U,\,\mathbb{F}_{\ell})\simeq R\Gamma(U^{\text{ad}},\,\mathbb{F}_{\ell}).
$$
Néanmoins on n'a pas besoin de cette observation pour la suite.
\\
De fait, soient $i\colon Z\hookrightarrow X^{\text{alg}}_{E,F}$ et $i^{\text{ad}}\colon Z^{\text{ad}}\hookrightarrow X^{\text{ad}}_{E,F}$ les inclusions des fermés complémentaires. Comme d'habitude, parce qu'on a déjà établi la comparaison $R\Gamma(X^{\text{alg}}_{E,F},\,\mathbb{F}_{\ell})\simeq R\Gamma(X^{\text{ad}}_{E,F},\,\mathbb{F}_{\ell})$, il suffit de vérifier la pureté cohomologique pour $i$ resp. $i^{\text{ad}},$ i.e. il s'agit d'expliquer pourquoi on a des isomorphismes $Ri^{!}\mathbb{F}_{\ell}(1)[2]\simeq \mathbb{F}_{\ell}$ et $Ri^{\text{ad} !}\mathbb{F}_{\ell}(1)[2]\simeq \mathbb{F}_{\ell}.$ L'isomorphisme $Ri^{!}\mathbb{F}_{\ell}(1)[2]\simeq \mathbb{F}_{\ell}$ résulte de la pureté absolue de Gabber \cite{FujiwaraPurity}.\footnote{Comme on travaille ici qu'en dimension $1$ on n'a pas vraiment besoin de toute la force de la pureté absolue et dans SGA 5 I. 5.1. on peut trouver un argument plus facile.} Pour l'isomorphisme dans le monde adique, on observe d'abord qu'il suffit de démontrer la pureté pour une paire $(Y_{I},Z^{\text{ad}}),$ où $Y_{I}$ est une couronne affinoïde suffisamment petite, telle que $Z^{\text{ad}}\subset Y_{I}.$ Maintenant, il suffit de démontrer que 
$$
Rf^{\lozenge !}\mathbb{F}_{\ell}\simeq \mathbb{F}_{\ell}(1)[2],
$$
où $f\colon Y_{I}^{\lozenge}\rightarrow \Spa(F)$ est le morphisme structurel vers $\Spa(F)$ du diamant. En utilisant la présentation $Y_{I}^{\lozenge}=A_{I}/\underline{\mathbb{Z}_{p}},$ où $A_{I}$ est la perfection d'une couronne dans la boule ouverte épointée sur $\Spa(F),$ on peut déduire l'isomorphisme $Rf^{\lozenge !}\mathbb{F}_{\ell}\simeq \mathbb{F}_{\ell}(1)[2]$ du fait qu'on connait déjà le complexe dualisant sur $A_{I}$ (\cite[Thm. 24.1]{ScholzeDiamonds}) en utilisant le résultat \cite[Prop. 24.2.]{ScholzeDiamonds} (on rappelle ici qu'en utilisant le dernier résultat on doit choisir une mesure de Haar sur $\mathbb{Z}_{p}$).
\end{Remark_french}
\section{Quelques résultats sur l'annulation de la cohomologie}\label{section Einige resultate Verschwinden}
\subsection{$\ell$-dimension cohomologique de $X^{\text{ad}}_{E,F}$}\label{subsection ell koho Dim der adischen Kurve}
Soit $\ell$ un nombre premier, $\ell \neq p.$ L'objectif ici est de démontrer le résultat suivant:
\begin{proposition}\label{l-torsion adische Kurve}
Soit $\mathcal{F}$ un $\mathbb{F}_{\ell}$-module dans $\widetilde{(X^{\text{ad}}_{E,F})_{\text{ét}}}.$ Alors on a
$$
H^{i}_{\text{ét}}(X^{\text{ad}}_{E,F},\,\mathcal{F})=0
$$
pour $i\geq 3.$
\end{proposition}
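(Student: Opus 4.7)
Le plan est de passer aux diamants et d'exploiter la présentation
$$
(X^{\text{ad}}_{E,F})^{\lozenge}\simeq \bigl(\mathbb{B}^{1,\circ,*,\text{perf}}_{F}/\varphi_{F}^{\mathbb{Z}}\bigr)/\underline{\mathbb{Z}_{p}}
$$
mentionnée dans l'introduction, combinée à l'équivalence entre le site étale d'un espace adique analytique et celui de son diamant associé (Scholze). On procède en trois étapes qui reflètent directement la tour ci-dessus: majorer d'abord la $\ell$-dimension cohomologique de la base perfectoïde $A:=\mathbb{B}^{1,\circ,*,\text{perf}}_{F},$ puis absorber successivement les quotients par $\varphi^{\mathbb{Z}}$ et par $\underline{\mathbb{Z}_{p}}$ via des suites spectrales de Cartan-Leray.

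La première étape est de démontrer $\cd_{\ell}(A)\leq 1.$ L'espace perfectoïde $A$ est 'Stein': il s'écrit comme réunion croissante $A=\bigcup_{n}A_{n}$ d'affinoïdes perfectoïdes (des couronnes perfectées) avec images de restrictions denses. Pour chaque $A_{n},$ le basculement $A_{n}^{\flat}$ est un affinoïde de dimension $1$ sur le corps algébriquement clos $F$ de caractéristique $p,$ et donc $\cd_{\ell}(A_{n})=\cd_{\ell}(A_{n}^{\flat})\leq 1$ pour $\ell\neq p$ (via Huber/Berkovich et l'invariance des sites étales par basculement). Un argument standard de type Mittag-Leffler, utilisant la densité des restrictions pour annuler les $\lim^{1},$ donne alors $\cd_{\ell}(A)\leq 1.$

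Pour le quotient par $\varphi^{\mathbb{Z}},$ l'action de $\varphi^{\mathbb{Z}}\simeq \mathbb{Z}$ sur $A$ est libre et proprement discontinue, et $A\rightarrow A/\varphi^{\mathbb{Z}}$ est un $\underline{\mathbb{Z}}$-torseur pro-étale. La suite spectrale de Cartan-Leray
$$
E_{2}^{i,j}=H^{i}\bigl(\mathbb{Z},\,H^{j}(A,\,\mathcal{F})\bigr)\Rightarrow H^{i+j}(A/\varphi^{\mathbb{Z}},\,\mathcal{F})
$$
et $\cd(\mathbb{Z})=1$ entraînent $\cd_{\ell}(A/\varphi^{\mathbb{Z}})\leq 2.$ Pour le second quotient, $\mathbb{Z}_{p}$ étant pro-$p$ et $\ell\neq p,$ on a $\cd_{\ell}(\mathbb{Z}_{p})=0,$ donc la suite spectrale de Cartan-Leray associée au $\underline{\mathbb{Z}_{p}}$-torseur $A/\varphi^{\mathbb{Z}}\rightarrow (X^{\text{ad}}_{E,F})^{\lozenge}$ n'augmente pas la dimension. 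On obtient bien $\cd_{\ell}(X^{\text{ad}}_{E,F})\leq 2.$

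Le point principal du travail réside dans la première étape: la borne $\cd_{\ell}(A)\leq 1$ exige une présentation Stein explicite de $\mathbb{B}^{1,\circ,*,\text{perf}}_{F}$ ainsi qu'un énoncé d'annulation pour les $\colim^{1}$ dans les suites spectrales de Čech, s'appuyant sur la densité des applications de restriction. Les deux étapes de quotient sont ensuite essentiellement formelles, à condition de disposer des suites spectrales de Cartan-Leray pour les $\underline{G}$-torseurs pro-étales dans la catégorie des diamants, ce qui est fourni par le formalisme de \cite{ScholzeDiamonds}.
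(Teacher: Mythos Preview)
Your high-level strategy --- passing to the diamond and using the tower $A\to A/\varphi^{\mathbb{Z}}\to (X^{\text{ad}}_{E,F})^{\lozenge}$ with $A=\mathbb{B}^{1,\circ,*,\text{perf}}_{F}$ --- is the same as the paper's, and your treatment of the $\underline{\mathbb{Z}_{p}}$-quotient (using that $\mathbb{Z}_{p}$ is pro-$p$, so $\cd_{\ell}(\mathbb{Z}_{p})=0$) is correct. The difficulty lies entirely in your first step.

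The claim $\cd_{\ell}(A)\leq 1$ via a ``Mittag--Leffler argument using density of restrictions'' does not work as stated. The proposition concerns an \emph{arbitrary} $\mathbb{F}_{\ell}$-module $\mathcal{F}$ on the étale site, not a coherent sheaf. The density of $\mathcal{O}(A_{n+1})\to\mathcal{O}(A_{n})$ is a statement about the structure sheaf and says nothing about the transition maps $H^{1}(A_{n+1},\mathcal{F})\to H^{1}(A_{n},\mathcal{F})$ for a general étale $\mathbb{F}_{\ell}$-sheaf; there is no mechanism by which topological density of ring maps forces Mittag--Leffler for these cohomology groups. Without this, the $R^{1}\lim$ sequence only gives $\cd_{\ell}(A)\leq 2$, and then the $\varphi^{\mathbb{Z}}$-step yields $\cd_{\ell}(A/\varphi^{\mathbb{Z}})\leq 3$, which is one too many.

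The paper circumvents this by never attempting to bound $\cd_{\ell}$ of the non-qcqs space $A$. Instead it observes that the quotient $A/\varphi^{\mathbb{Z}}$ is qcqs --- hence a \emph{spatial} diamond --- and applies Scholze's general bound \cite[Prop.~21.11]{ScholzeDiamonds}:
$$
\cd_{\ell}(A/\varphi^{\mathbb{Z}})\leq \dim_{\mathrm{Krull}}\bigl(|A/\varphi^{\mathbb{Z}}|\bigr)+\sup_{y}\cd_{\ell}(\kappa(y)).
$$
The Krull dimension is $1$ (the action being properly discontinuous), and for maximal points $y$ one has $\cd_{\ell}(\kappa(y))\leq 1$, either by \cite[Prop.~21.16]{ScholzeDiamonds} (since $\mathrm{dim.trg}$ of the structure map to $\Spa(F)$ is $1$) or by Berkovich's results on cohomological dimension of residue fields. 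This gives $\cd_{\ell}(A/\varphi^{\mathbb{Z}})\leq 2$ directly and uniformly in $\mathcal{F}$, with no limit argument. The moral: quotient first to land in the spatial world, then invoke the dimension bound --- rather than trying to control a non-quasi-compact cover.
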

Pour la preuve on utilise la présentation du diamant $(X^{\text{ad}}_{E,F})^{\lozenge}$ suivante: soit $E(\mu_{p^{\infty}})$ l'extension cyclotomique infinie de $E,$ construite en ajoutant tous les racines $p$-ème d'unité dans une clôture algébrique de $E$ choisie; c'est une extension galoisienne infinie. Le caractère cyclotomique identifie le groupe $\Gal(E(\mu_{p^{\infty}})/E)$ à un sous-groupe infini et fermé de $\mathbb{Z}_{p}^{*}.$ Comme la partie de torsion d'un tel groupe est forcement finie, on en déduit un isomorphisme
$$
\Gal(E(\mu_{p^{\infty}})/E)\simeq \mathbb{Z}_{p}\times \Delta,
$$
où $\Delta$ est un groupe fini. Soit $E_{\infty}=(E(\mu_{p^{\infty}}))^{\Delta};$ c'est une extension de Galois infinie de $E$ avec groupe de Galois isomorphe à $\mathbb{Z}_{p}:$ cette extension s'appelle la $\mathbb{Z}_{p}$-extension cyclotomique de $E.$ La complétion $p$-adique $\widehat{E_{\infty}}$ est un corps perfectoïde. On peut alors appliquer au corps $E_{\infty}$ la théorie du corps des normes de Fontaine-Wintenberger: soit $X_{E}(E_{\infty})$ le corps des normes associé; c'est un corps de valuation discrète de caractéristique $p,$ avec corps résiduel isomorphe à celui de $E_{\infty}$ (et, comme $E_{\infty}$ est une colimite d'extension totalement ramifiées de $E,$ c'est simplement $\mathbb{F}_{q}$). En particulier, on en déduit un isomorphisme non-canonique entre $X_{E}(E_{\infty})$ et $\mathbb{F}_{q}(\!(t)\!).$ De plus, la théorie des corps du normes dit que le basculement $(\widehat{E_{\infty}})^{\flat}$ s'identifie à la perfection completée de $X_{E}(E_{\infty}),$ c.f. \cite[Thm. 2.1.6]{EmertonGeeStack}; on en déduit que $(\widehat{E_{\infty}})^{\flat}$ est non-canoniquement isomorphe à $\mathbb{F}_{q}(\!(t^{1/p^{\infty}})\!).$ Cela implique un isomorphisme
$$
(X^{\text{ad}}_{E,F})^{\lozenge}\simeq (\mathbb{B}^{1,\circ,*,\text{perf}}_{F}/\varphi_{F}^{\mathbb{Z}})/\underline{\mathbb{Z}_{p}}.
$$
Ici le choix de la variable $t$ sur la boule unité ouverte épointée $\mathbb{B}^{1,\circ,*}_{F}$ sur $\Spa(F),$ correspond au choix d'un isomorphisme $(\widehat{E_{\infty}})^{\flat}\simeq \mathbb{F}_{q}(\!(t^{1/p^{\infty}})\!).$ Maintenant on peut donner la preuve de la proposition ci-dessus:
\begin{proof}
Soit $\mathcal{F}$ un $\mathbb{F}_{\ell}$-module étale sur $X^{\text{ad}}_{E,F}.$ Comme le site et la cohomologie étale d'un espace adique analytique sur $\Spa(\mathbb{Z}_{p})$ ne changent pas si on passe au diamant associé, \cite[Lemma 15.6]{ScholzeDiamonds}, on a la formule suivante:
$$
R\Gamma(X^{\text{ad}}_{E,F},\,\mathcal{F})\simeq R\Gamma((X^{\text{ad}}_{E,F})^{\lozenge},\,\mathcal{F}^{\lozenge})\simeq R\Gamma(\mathbb{Z}_{p},\,R\Gamma(\mathbb{B}^{1,\circ,*,\text{perf}}_{F}/\varphi_{F}^{\mathbb{Z}},\, \mathcal{G})),
$$
où $\mathcal{G}$ est un $\mathbb{F}_{\ell}$-module étale sur $ \mathbb{B}^{1,\circ,*,\text{perf}}_{F}/\varphi_{F}^{\mathbb{Z}}.$ La raison pour laquelle on a choisi cette présentation du diamant de $X^{\text{ad}}_{E,F},$ est que maintenant on peut utiliser que $\mathbb{Z}_{p}$ est un pro-$p$ groupe et comme on est dans le cas où $\ell \neq p,$ la suite de Hochschild-Serre\footnote{Cette suite spectrale existe parce qu'on considère ici seulement des faisceaux, avec des valeurs discrètes, qui viennent du site étale.} implique
$$
H^{i}(X^{\text{ad}}_{E,F})^{\lozenge},\, \mathcal{F}^{\lozenge})=H^{0}(\mathbb{Z}_{p},\,H^{i}(\mathbb{B}^{1,\circ,*,\text{perf}}_{F}/\varphi_{F}^{\mathbb{Z}},\, \mathcal{G})),
$$
pour $i\geq 0.$ Il reste donc à démontrer que $\text{cd}_{\ell}(\mathbb{B}^{1,\circ,*,\text{perf}}_{F}/\varphi_{F}^{\mathbb{Z}})\leq 2.$ L'espace perfectoïde $\mathbb{B}^{1,\circ,*,\text{perf}}_{F}/\varphi_{F}^{\mathbb{Z}}$ est qcqs et donc en particulier un diamant spatial. Par \cite[Prop. 21.11]{ScholzeDiamonds}, on a
$$
\text{cd}_{\ell}(\mathbb{B}^{1,\circ,*,\text{perf}}_{F}/\varphi_{F}^{\mathbb{Z}})\leq \dim_{\text{Krull}}(|\mathbb{B}^{1,\circ,*,\text{perf}}_{F}/\varphi_{F}^{\mathbb{Z}}|)+\text{sup}_{y}\text{cd}_{\ell}(\kappa(y)),
$$
où le supremum est pris sur tous les points maximaux dans $\mathbb{B}^{1,\circ,*,\text{perf}}_{F}/\varphi_{F}^{\mathbb{Z}}.$ Comme l'action de $\varphi_{F}$ sur $\mathbb{B}^{1,\circ,*,\text{perf}}_{F}$ est proprement discontinue, 
$$
\dim_{\text{Krull}}(|\mathbb{B}^{1,\circ,*,\text{perf}}_{F}/\varphi_{F}^{\mathbb{Z}}|)=\dim_{\text{Krull}}(|\mathbb{B}^{1,\circ,*,\text{perf}}_{F}|)=1.
$$
Il faut donc déterminer $\text{cd}_{\ell}(\kappa(y)),$ où $y\in \mathbb{B}^{1,\circ,*,\text{perf}}_{F}/\varphi_{F}^{\mathbb{Z}}$  est un point maximal. Ici on peut soit utiliser un résultat de Scholze, qui dit que parce que $f\colon \mathbb{B}^{1,\circ,*,\text{perf}}_{F}/\varphi_{F}^{\mathbb{Z}}\rightarrow \Spa(F,\,\mathcal{O}_{F})$ est un morphisme entre diamants spatiaux avec $\text{dim.trg}(f)= 1,$ on a $\text{cd}_{\ell}(\kappa(y))\leq 1$ c.f. \cite[Prop. 21.16]{ScholzeDiamonds}, soit on peut utiliser des résultats de Berkovich: le corps résiduel complété $\kappa(y)$ s'identifie au corps résiduel complété d'un point maximal dans la perfection d'une couronne suffisamment petite $A(\rho, \rho^{\prime})\subseteq \mathbb{B}^{1,\circ,*}_{F}.$ \footnote{Cette notation est différent de celle dans \cite{BerkovichIHES}: il note $\kappa(x)$ le corps résiduel non-complété et $\mathcal{H}(x)$ le corps résiduel complété.} Soit $y^{\prime}\in A(\rho, \rho^{\prime})$ le point maximal qui correspond à $y$ et soient $L(y^{\prime})$ le corps résiduel  non-complété resp. $\kappa(y^{\prime})$ le corps résiduel complété de $A(\rho, \rho^{\prime})$ en $y^{\prime}.$ Il faut donc comprendre $\text{cd}_{\ell}(\widehat{\kappa(y^{\prime})^{\text{perf}}}).$ En utilisant \cite[Prop. 2.4.1.]{BerkovichIHES}(ici on utilise \cite[Prop. 2.4.3.]{BerkovichIHES} pour s'assurer que les hypothèses sont bien vérifiées: la condition que l'anneau des entiers est hensélien est stable sous perfection et complétion), on déduit
$$
\Gal((\widehat{\kappa(y^{\prime})^{\text{perf}}})^{\text{sep}}/\widehat{\kappa(y^{\prime})^{\text{perf}}})\simeq \Gal((\kappa(y^{\prime})^{\text{perf}})^{\text{sep}}/\kappa(y^{\prime})^{\text{perf}})\simeq \Gal(\kappa(y^{\prime})^{\text{sep}}/\kappa(y^{\prime})).
$$
Le résultat \cite[Thm. 2.3.3.]{BerkovichIHES} dit que $L(y^{\prime})$ est quasi-complet (c.f. \cite[Def.2.3.1.]{BerkovichIHES}), donc on peut oublier le procédé de complétion (encore par \cite[Prop. 2.4.1.]{BerkovichIHES}) pour calculer la $\ell$-dimension cohomologique et à ce point là on peut simplement utiliser \cite[Thm. 2.5.1.]{BerkovichIHES}; théorème qui dit
$$
\text{cd}_{\ell}(\Gal(L(y^{\prime}))^{\text{sep}}/L(y^{\prime}))\leq \text{cd}_{\ell}(F)+\dim(|A(\rho,\rho^{\prime})^{\text{Berk}}|)=1.
$$
\end{proof}
\subsection{$\ell$-dimension cohomologique de $E(X^{\alg}_{E,F})$}\label{subsection ell koho dim des Funktionenkoerpers}
Soient $\ell$ toujours un nombre premier, $\ell \neq p$ et $E(X^{\alg}_{E,F})$ le corps de fonctions de la courbe algébrique. 
\begin{proposition}\label{Prop l dimension Funktionenkoerper}
On a 
$$
\text{cd}_{\ell}(E(X^{\alg}_{E,F}))\leq 1.
$$
\end{proposition}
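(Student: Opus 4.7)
Je procéderais via la caractérisation standard du cd: $\cd_\ell(K)\leq 1$ si et seulement si $H^2(L,\mathbb{Z}/\ell)=0$ pour toute extension finie séparable $L$ de $K.$ Quitte à passer à $L(\mu_\ell),$ qui est une extension de degré divisant $\ell-1,$ donc premier à $\ell$ (et pour laquelle la restriction est injective sur la $\ell$-torsion de $H^2$ par un argument de corestriction-restriction), on peut supposer que $\mu_\ell\subset L$ et identifier $H^2(L,\mathbb{Z}/\ell)\simeq H^2(L,\mu_\ell)\simeq \Br(L)[\ell].$ Il s'agit alors d'établir $\Br(L)[\ell]=0$ pour toute telle extension $L.$

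Chacune de ces extensions s'écrit $L=E(X')$ pour un revêtement ramifié $\pi\colon X'\to X^{\alg}_{E,F}$ au sens de la définition \ref{Def verzweigte Ueberdeckung}: on prend pour $X'$ la normalisation de $X^{\alg}_{E,F}$ dans $L,$ qui est bien finie grâce à la propriété de Nagata (cf. la preuve du lemme \ref{Reduktion von allgemein konstru zu konstant verzweigte}). Comme les corps résiduels en tous les points fermés de $X'$ sont algébriquement clos (dé-basculements de $F$), on a $H^1(\kappa(x),\mathbb{Q}/\mathbb{Z})=0$ pour tout tel $x,$ et la suite exacte de localisation
$$
0\to \Br(X')\to \Br(L)\to \bigoplus_{x\in |X'|}H^1(\kappa(x),\mathbb{Q}/\mathbb{Z})
$$
pour le schéma régulier noethérien $X'$ de dimension $1$ (conséquence de la pureté cohomologique de Gabber) dégénère en un isomorphisme $\Br(L)\simeq \Br(X').$

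Il reste alors à démontrer $\Br(X')[\ell]=0,$ ce qui, via la suite exacte de Kummer
$$
0\to \Pic(X')/\ell\to H^2(X',\mu_\ell)\to \Br(X')[\ell]\to 0,
$$
revient à la surjectivité du morphisme canonique $\Pic(X')/\ell\to H^2(X',\mu_\ell).$ Dans le cas particulier $X'=X^{\alg}_{E',F}$ (avec $E'/E$ finie), le théorème de Fargues \cite[Thm. 3.7]{FarguesGtorseurs} donne $H^2(X^{\alg}_{E',F},\mu_\ell)\simeq H^2(E',\mu_\ell)\simeq \mathbb{F}_\ell$ par la théorie du corps de classes local, et $\Pic(X^{\alg}_{E',F})\simeq \mathbb{Z}$ surjecte dessus, ce qui conclut.

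Le cas général, à savoir celui d'un revêtement ramifié purement géométrique $X'\to X^{\alg}_{E',F}$ (où $E'$ est la clôture algébrique de $E$ dans $L$), constitue l'obstacle principal, car le théorème de Fargues ne s'applique pas directement aux faisceaux non constants. Une stratégie consisterait à passer à la tour cyclotomique $E_\infty/E$ pour exploiter le fait que $\Gal(E_\infty/E)\simeq \mathbb{Z}_p$ est pro-$p,$ donc invisible pour la $\ell$-cohomologie via Hochschild-Serre lorsque $\ell\neq p,$ afin de se ramener à la courbe perfectoïde 'géométrique' $\mathbb{B}^{1,\circ,*,\text{perf}}_{F}/\varphi_{F}^{\mathbb{Z}}$ sur $\Spa(F),$ où l'existence d'un corps algébriquement clos de base $F$ permettrait d'invoquer un analogue de Tsen pour les corps de fonctions de courbes sur un corps algébriquement clos.
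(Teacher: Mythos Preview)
Ta réduction est correcte jusqu'au point où tu établis $\Br(L)\simeq \Br(X')$ via la suite de localisation (c'est même un peu plus direct que la route du papier, qui passe par une colimite sur les ouverts affines); les corps résiduels des points fermés de $X'$ sont bien algébriquement clos puisqu'ils sont finis sur ceux de $X^{\alg}_{E,F}$. Mais après, ta preuve est incomplète: tu ne traites que le cas très particulier $X'=X^{\alg}_{E',F}$, et pour un revêtement ramifié général tu esquisse une stratégie sans l'exécuter. Or ce cas général est précisément le contenu de la proposition: le cas $X'=X^{\alg}_{E',F}$ ne donne que $\cd_\ell$ d'une certaine sous-extension, pas de toutes les extensions finies.

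La preuve du papier réalise en substance ta stratégie, mais du côté adique. L'étape décisive est: par GAGA pour les revêtements ramifiés (prop.~\ref{GAGA verzweigte Sachen}, lemme~\ref{GAGA fuer die Brauergruppe}), on a $\Br(X')\simeq \Br(X'^{\ad})$. On montre alors $\Br(X'^{\ad})[\ell]=0$ en analysant la suite spectrale de Leray pour le morphisme $r\colon (X'^{\ad})_{\et}\to (X'^{\ad})_{\text{an}}$: les fibres de $R^2r_*\mathbb{G}_m$ aux points maximaux sont $\Br(\kappa(x))$, et $\cd_\ell(\kappa(x))\leq 1$ pour tout point maximal $x\in X'^{\ad}$ (ce qui est déjà établi dans la preuve de la prop.~\ref{l-torsion adische Kurve}, via la présentation du diamant $(X^{\ad}_{E,F})^\lozenge\simeq (\mathbb{B}^{1,\circ,*,\text{perf}}_{F}/\varphi_F^{\mathbb{Z}})/\underline{\mathbb{Z}_p}$ ou via les bornes de Berkovich). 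C'est exactement le passage \og à la courbe géométrique sur $F$ via la tour cyclotomique\fg{} que tu évoques, mais la clé que tu manques est que l'outil pour y accéder est GAGA vers la courbe adique, où les corps résiduels complétés aux points maximaux héritent directement de la borne $\cd_\ell\leq 1$ de la géométrie perfectoïde.
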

L'idée est la suivante: on reformule d'abord l'énoncé que $\text{cd}_{\ell}(E(X^{\alg}_{E,F}))\leq 1$  en une assertion sur l'annulation des groupes de Brauer des revêtements ramifiés. Puis on peut appliquer GAGA (proposition \ref{GAGA verzweigte Sachen}) pour ces revêtements ramifiés afin de se réduire à démontrer l'annulation de la $\ell$-torsion dans le groupe de Brauer de l'adification. Finalement on peut finir l'argument en utilisant qu'il résulte de la preuve de la proposition \ref{l-torsion adische Kurve} que pour tout point maximal $x\in X^{\text{ad}}_{E,F}$ on a $\text{cd}_{\ell}(\kappa(x))\leq 1.$\footnote{Ceci était peut-être un peu implicite dans la preuve de la proposition \ref{l-torsion adische Kurve}; voici plus de détails: on peut calculer le corps résiduel complété $\kappa(x)$ dans un voisinage affinoïde $Y_{I}$ du point maximal $x\in |X^{\text{ad}}_{E,F}|.$ Après on peut identifier le point maximal $x\in |Y_{I}|$ à un point maximal dans le diamant associé $Y_{I}^{\lozenge};$ noté $x^{\lozenge}.$ On rappelle qu'il y a une définition de la $\ell$-dimension cohomologique du point $x^{\lozenge}\in Y_{I}^{\lozenge},$ c.f. \cite[Def. 21.10]{ScholzeDiamonds}, tel que $\cd_{\ell}(x)=\cd_{\ell}(\kappa(x)).$ Soit maintenant $f\colon Y_{I}^{\lozenge}\rightarrow \Spa(F)$ le morphisme structurel; c'est un morphisme entre diamants spatiaux avec $\text{dim.trg}(f)\leq 1.$ Le résultat \cite[Prop. 21.16]{ScholzeDiamonds} implique par conséquent bien
$$
\cd_{\ell}(x)=\cd_{\ell}(x^{\lozenge})\leq 1.
$$}
\\
On commence par le lemme suivant; la preuve ressemble à celle donnée par Fargues dans \cite[Cor. 2.5.]{FarguesGtorseurs}.
\begin{Lemma_french}
Pour démontrer $\text{cd}(E(X^{\alg}_{E,F}))\leq 1,$ il suffit de démontrer
$$
\Br(X^{\prime})=0,
$$
pour tout revêtement ramifié $X^{\prime}$ de $X^{\alg}_{E,F}.$
\end{Lemma_french}
\begin{Remark_french}\label{Bemerkung zu l-torsion in der Brauergruppe verzweiger Erweiterungen}
On observe que l'énoncé du lemme n'est pas limité à la $\ell$-dimension cohomologique. Plus tard, on explique comment démontrer l'analogue suivant: pour démontrer $\text{cd}_{\ell}(E(X^{\alg}_{E,F}))\leq 1,$ il suffit de démontrer
$$
\Br(X^{\prime})[\ell]=0,
$$
pour tout revêtement ramifié $X^{\prime}$ de $X^{\alg}_{E,F}.$
\end{Remark_french}
\begin{proof}
On observe d'abord que pour démontrer que $\text{cd}(E(X^{\alg}_{E,F}))\leq 1,$ il suffit de démontrer que $\Br(L)=0$ pour toute extension finie $L/E(X^{\alg}_{E,F}).$ En effet, c'est exactement \cite[TAG 03R8]{stacks}. Ensuite, on prend la normalisation de $L$ dans $X^{\alg}_{E,F};$ c'est un revêtements ramifié $X^{\prime}\rightarrow X^{\alg}_{E,F}$ avec corps de fonctions identifié à $L$ (c.f. la preuve du lemme \ref{Reduktion von allgemein konstru zu konstant verzweigte}). La suite exacte de Kummer sur $\Spec(L)_{\text{ét}}$ implique
$$
\Br(L)[\ell]=H^{2}_{\text{ét}}(\Spec(L),\,\mu_{\ell})=\colim_{U}H^{2}(U,\,\mu_{\ell}),
$$ 
où $\ell$ est un nombre premier arbitraire (i.e. peut-être que $\ell=p$) et la colimite porte sur tous les ouverts affines non-vides dans $X^{\prime}.$\footnote{Ici on a utilisé les faits suivants: le revêtement ramifié admet un faisceau inversible ample, parce que le morphisme $X^{\prime}\rightarrow X^{\alg}_{E,F}$ est fini, et il en résulte qu'on peut raffiner un ouvert quelconque dans $X^{\prime}$ par un ouvert affine (c.f. \cite[Tag 01ZY]{stacks}); cela implique la formule  $L=\underset{U \text{affine ouvert}}\colim \mathcal{O}_{X^{\prime}}(U)$ et le résultat \cite[Tag 09YU]{stacks}. Question: est-il vraie que tous les ouverts dans $X^{\prime}$ sont automatiquement affines?} Soit $Z=X^{\prime}-U.$ Le couple $(X^{\prime},Z)$ est pur d'après le théorème de pureté absolue de Gabber \cite[Thm. 2.1.1]{FujiwaraPurity}; ici on utilise que $X^{\prime}$ est un schéma noethérien normal de dimension $1$ (donc régulier) et $Z$ est une collection finie de spectres des corps, donc régulier lui aussi. On en déduit une surjection
$$
H^{2}(X^{\prime},\, \mu_{\ell})\rightarrow H^{2}(U,\, \mu_{\ell}).
$$
D'après Grothendieck, \cite[Cor. 2.2.]{GrothendieckBrauerII}, on sait que le groupe de Brauer de $X^{\prime}$ est isomorphe au groupe de Brauer cohomologique. En utilisant l'hypothèse $\Br(X^{\prime})=0,$ la suite exacte de Kummer donne alors
$$
\text{Pic}(X^{\prime})_{[\ell]}\simeq H^{2}(X^{\prime},\, \mu_{\ell})
$$
(ici $\text{Pic}(X^{\prime})_{[\ell]}$ est le co-noyau de la multiplication par $\ell.$) Pour démontrer $\Br(L)[\ell]=0,$ on doit trouver pour tout $x\in H^{2}(U,\, \mu_{\ell})$ un ouvert affine $V\subseteq U,$ tel que $x\!\mid_{V}=0\in H^{2}(V,\, \mu_{\ell}).$ Puisqu'on sait déjà que l'application
$$
\text{Pic}(U)_{[\ell]}\rightarrow H^{2}(U,\, \mu_{\ell})
$$
est surjective, on peut trouver une pré-image $[\mathcal{L}]\in \Pic(U)$ de la classe $x.$ Comme $0=\text{Pic}(L)=\colim_{U \text{affine }}\text{Pic}(U),$ on peut trouver un ouvert affine $V\subseteq U,$ tel que $[\mathcal{L}]\!\mid_{V}=0.$ En regardant le diagramme commutatif suivant
$$
\xymatrix{
\text{Pic}(U)_{[\ell]} \ar[r] \ar[d] & H^{2}(U,\, \mu_{\ell}) \ar[d] \\
\text{Pic}(V)_{[\ell]} \ar[r] & H^{2}(V,\, \mu_{\ell}),
}
$$
on trouve $x\!\mid_{V}=0 \in H^{2}(V,\, \mu_{\ell})=0.$ Cela implique $\Br(L)[\ell]=0$ pour tout nombre premier $\ell,$ i.e. $\Br(L)=0.$
\end{proof}
\begin{Remark_french}
Cette remarque est la suite de la remarque faite avant c.f. remarque \ref{Bemerkung zu l-torsion in der Brauergruppe verzweiger Erweiterungen}. On affirme que pour démontrer $\text{cd}_{\ell}(E(X^{\alg}_{E,F}))\leq 1,$ il suffit de démontrer $\Br(X^{\prime})[\ell]=0$ pour tout revêtement ramifié de $X^{\text{alg}}_{E,F}.$
\\
Afin d'expliquer ceci, il suffit d'expliquer le fait suivant: pour démontrer $\text{cd}_{\ell}(E(X^{\alg}_{E,F}))\leq 1,$ il suffit de démontrer que $\Br(L)[\ell]=0,$ pour toute extension finie de $E(X^{\alg}_{E,F}).$ En effet, une fois que l'on a expliqué cette réduction, on peut simplement suivre la preuve donnée auparavant.
\\
On suppose donc $\Br(L)[\ell]=0$ pour toute extension finie de $E(X^{\alg}_{E,F}).$ Soit $\mathcal{F}$ un faisceau de $\ell$-torsion sur $\Spec(E(X^{\alg}_{E,F}))_{\text{ét}}.$ L'objectif est de démontrer que $H^{i}(\Spec(E(X^{\alg}_{E,F})), \mathcal{F})=0,$ pour $i\geq 2.$ Pour une extension finie $L$ de $E(X^{\alg}_{E,F}),$ on sait par hypothèse que $H^{2}(\Spec(L),\,\mu_{\ell})=0.$ Soit $$H\subset \Gal(\overline{E(X^{\alg}_{E,F}})/E(X^{\alg}_{E,F}))$$ le sous-groupe pro-$\ell$ maximal et $M\subset \overline{E(X^{\alg}_{E,F})}$ l'extension associée. Comme $M$ est une colimite d'extensions finies, on sait que $H^{2}(\Spec(M),\mu_{\ell})=0.$ 
\\
On observe que $\mu_{\ell}\subset M^{*}.$ De fait, sinon $M$ aurait une extension galoisienne de degré premier à $\ell,$ mais c'est impossible par construction de $M.$ Maintenant on peut appliquer \cite[Tag 0DV9]{stacks} et en déduire que 
$$H^{q}(\Spec(E(X^{\alg}_{E,F})),\,\mathcal{F})=0,$$
pour $q\geq 2.$
\end{Remark_french}
En utilisant l'équivalence GAGA pour des revêtements ramifiés (prop. \ref{GAGA verzweigte Sachen}) pour déduire que
$$
\Br(X^{\prime})\simeq \Br(X^{\prime \text{ad}}),
$$
(c.f. lemme \ref{GAGA fuer die Brauergruppe}),
il suffit alors d'expliquer pourquoi le résultat suivant est vrai:
\begin{Lemma_french}\label{Lemma l-torsion Brauer adisch}
Pour tout $\ell\neq p,$ on a
$$
\Br(X^{\prime \ad})[\ell]=0.
$$
\end{Lemma_french}
\begin{proof}
On considère la projection du site étale vers le site analytique de l'espace adique analytique noethérien $X^{\prime \text{ad}}:$
$$
r\colon (X^{\prime \text{ad}})_{\text{ét}}\rightarrow (X^{\prime \text{ad}})_{\text{an}}.
$$
On a la formule suivante pour la fibre de $R^{i}r_{*}\mathbb{G}_{m}$ en un point $x\in |X^{\prime \text{ad}}|:$ soit $X^{\prime \text{ad}}_{x}$ la localisation de l'espace $X^{\prime \text{ad}}$ en le point $x;$ c'est l'espace pseudo-adique au sens de Huber $(X^{\prime \text{ad}},G(x)),$ où $G(x)$ est l'ensemble des généralisations du point $x.$ Alors on a:
$$
(R^{i}r_{*}\mathbb{G}_{m})_{x}=H^{i}(X^{\prime \text{ad}}_{x},\,\mathbb{G}_{m}).
$$
En effet, cela résulte de l'énoncé de \cite[Corollary 2.4.6]{HuberBuch}, en utilisant $G(x)\sim \underset{x\in U\text{ ouvert qcqs}}\lim U.$ On affirme que cette fibre ne dépend que de la généralisation maximale de $x.$ On observe dans un premier temps que $\mathbb{G}_{m}$ est un faisceau sur-convergent ('overconvergent') sur $(X^{\prime \text{ad}})_{\text{an}};$ par \cite{HuberBuch}, début de la page 400, on doit se convaincre que $\mathbb{G}_{m}$ est constant sur les généralisations des points $x\in X^{\prime \text{ad}};$ ceci est le cas parce que, par définition, les sections de $\mathbb{G}_{m}$ ne dépendent pas de la composante $+$ dans une paire de Huber affinoïde. On peut aussi observer que $\mathbb{G}_{m}$ est sur-convergent dans le sens étale, \cite[Def. 8.2.1]{HuberBuch}: en utilisant \cite[Rem. 8.2.2. (b)]{HuberBuch}, on doit vérifier le fait suivant: pour $Y\rightarrow X^{\prime \text{ad}}$ étale, $\mathbb{G}_{m}\!\mid_{Y}$ est un faisceau sur-convergent sur le site $(Y)_{\text{an}}:$ ceci resulte du même argument que celui qu'on donné avant.
\\
En particulier, on en déduit que pour toute spécialisation $\eta_{1}\rightarrow \eta_{2}$ entre points géométriques dans $(X^{\prime\text{ad}}_{x})_{\text{ét}},$ on a
$$
(\mathbb{G}_{m}\!\mid_{X^{\prime \text{ad}}_{x}})_{\eta_{2}}\simeq (\mathbb{G}_{m}\!\mid_{X^{\prime\text{ad}}_{x}})_{\eta_{1}}.
$$
En utilisant \cite[Cor. 2.6.7]{HuberBuch}, on voit bien que la fibre $(R^{i}r_{*}\mathbb{G}_{m})_{x}$ ne dépend que de la généralisation maximale de $x.$
\\
 Finalement, on peut appliquer \cite[Prop. 2.3.10.]{HuberBuch}, pour avoir la formule assez explicite dans les mains:
\begin{equation}\label{Beschreibung des Stalks an einem maximalen Punkt}
(R^{i}r_{*}\mathbb{G}_{m})_{x}=H^{i}_{\text{ét}}(\Spec(K),\,\mathbb{G}_{m}),
\end{equation}
où $K$ est le corps résiduel complété (!) du point $x.$
Maintenant, on affirme que la suite spectrale de Leray pour $r$ implique un isomorphisme
$$
H^{2}(X^{\prime \text{ad}},\,\mathbb{G}_{m})=H^{0}((X^{\prime})_{\text{an}},\, R^{2}r_{*}\mathbb{G}_{m}).
$$
De fait, on doit expliquer pourquoi $H^{1}((X^{\prime})_{\text{an}}, R^{1}r_{*}\mathbb{G}_{m})=H^{2}((X^{\prime})_{\text{an}},\, R^{0}r_{*}\mathbb{G}_{m})=0.$ La première annulation résulte simplement de 
$$
(R^{1}r_{*}\mathbb{G}_{m})_{x}=\Pic(K)=0,
$$
pour tout point maximal $x\in X^{\prime \text{ad}};$ ici on a utilisé la formule (\ref{Beschreibung des Stalks an einem maximalen Punkt}) et $H^{1}_{\text{ét}}(\Spec(K),\,\mathbb{G}_{m})=\Pic(K).$ La seconde annulation résulte du fait que $|X^{\prime \text{ad}}|$ est un espace topologique qcqs et spectral de dimension de Krull $1$, c.f. \cite[Cor. 4.6.]{ScheidererCohoDim}. Ensuite, on veut comprendre $H^{0}((X^{\prime})_{\text{an}},\, R^{2}r_{*}\mathbb{G}_{m}).$ On a une injection
$$
H^{0}((X^{\prime})_{\text{an}},\, R^{2}r_{*}\mathbb{G}_{m})\hookrightarrow \prod_{x \text{ maximal}}(R^{2}r_{*}\mathbb{G}_{m})_{x},
$$
qui induit une injection analogue sur les parties de $\ell$-torsion:
$$
H^{0}((X^{\prime})_{\text{an}},\, R^{2}r_{*}\mathbb{G}_{m})[\ell]\hookrightarrow \prod_{x \text{ maximal}}(R^{2}r_{*}\mathbb{G}_{m})_{x}[\ell].
$$
Cependant, on peut désormais observer qu'on a $(R^{2}r_{*}\mathbb{G}_{m})_{x}[\ell]=0:$ en effet, soient $x\in X^{\prime \ad}$ un point maximal et $y\in X^{\text{ad}}_{E,F}$ son image; c'est encore un point maximal et l'extension des corps résiduels complétés $\kappa(x)/\kappa(y)$ est finie. Il en résulte $\text{cd}_{\ell}(x)\leq\text{cd}_{\ell}(y)\leq 1$ - ce qu'on a déjà expliqué dans la preuve de la proposition \ref{l-torsion adische Kurve}. En utilisant encore la formule (\ref{Beschreibung des Stalks an einem maximalen Punkt}), on obtient l'énoncé d'annulation suivant
$$
0=H^{2}_{\text{ét}}(\Spec(\kappa(x)),\,\mu_{\ell})=\Br(\kappa(x))[\ell]=(R^{2}r_{*}\mathbb{G}_{m})_{x}[\ell].
$$
Au total, on en déduit par la suite spectrale de Leray l'annulation $H^{2}(X^{\prime \text{ad}},\mathbb{G}_{m})[\ell]=0.$ Comme le groupe de Brauer défini en utilisant les algèbres d'Azumaya s'injecte dans le groupe de Brauer cohomologique, on a une injection $\Br(X^{\prime \text{ad}})[\ell]\hookrightarrow H^{2}(X^{\prime\text{ad}},\,\mathbb{G}_{m})[\ell]=0,$ ce qui termine la preuve.
\end{proof}
\subsection{$\ell$-dimension cohomologique de $X^{\alg}_{E,F}$}\label{subsection ell koho dim der algebraischen Kurve}
Soit $\ell$ toujours un nombre premier, $\ell \neq p.$ 
\begin{proposition}\label{l-kohomologische Dimension algebraische Kurve}
Soit $\mathcal{F}$ un $\mathbb{F}_{\ell}$-module constructible dans $\widetilde{(X^{\text{alg}}_{E,F})_{\text{ét}}}.$ Alors on a
$$
H^{i}(X^{\alg}_{E,F},\,\mathcal{F})=0,
$$
pour $i\geq 3.$
\end{proposition}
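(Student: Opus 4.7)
Le plan est de combiner le lemme \ref{Reduktion von allgemein konstru zu konstant verzweigte} avec la borne sur la $\ell$-dimension cohomologique du corps de fonctions fournie par la proposition \ref{Prop l dimension Funktionenkoerper}. D'après le lemme \ref{Reduktion von allgemein konstru zu konstant verzweigte}, il suffit de traiter le cas d'un faisceau de la forme $\pi_{*}(M),$ où $\pi\colon X^{\prime}\rightarrow X^{\alg}_{E,F}$ est un revêtement ramifié et $M$ un $\mathbb{F}_{\ell}$-module fini. Comme $\pi$ est fini, $\pi_{*}$ est exact et on a $H^{i}(X^{\alg}_{E,F},\,\pi_{*}M)=H^{i}(X^{\prime},\,M);$ en filtrant $M$ par des sous-modules avec gradué $\mathbb{F}_{\ell},$ on se ramène à démontrer $H^{i}(X^{\prime},\,\mathbb{F}_{\ell})=0$ pour $i\geq 3$ et pour tout revêtement ramifié $X^{\prime}.$

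Pour cela, j'utiliserais la suite spectrale de Leray associée à l'immersion ouverte $j\colon \eta=\Spec(E(X^{\prime}))\hookrightarrow X^{\prime}$ du point générique:
$$E_{2}^{p,q}=H^{p}(X^{\prime},\,R^{q}j_{*}\mathbb{F}_{\ell})\Rightarrow H^{p+q}(\eta,\,\mathbb{F}_{\ell}).$$
L'extension $E(X^{\prime})/E(X^{\alg}_{E,F})$ étant finie séparable (on est en caractéristique $0$), la proposition \ref{Prop l dimension Funktionenkoerper} entraîne $\cd_{\ell}(E(X^{\prime}))\leq 1,$ et l'aboutissement s'annule en degré $\geq 2.$ Ensuite, il s'agit de calculer les fibres $(R^{q}j_{*}\mathbb{F}_{\ell})_{\bar{x}}=H^{q}(\Spec(K_{x}^{sh}),\,\mathbb{F}_{\ell})$ en un point géométrique $\bar{x}$ au-dessus d'un point fermé $x\in X^{\prime},$ où $K_{x}^{sh}$ est le corps des fractions de la hensélisation stricte de $\mathcal{O}_{X^{\prime},x}.$ C'est un corps local strictement hensélien de corps résiduel algébriquement clos de caractéristique $p$; par la théorie de la ramification modérée, sa $\ell$-dimension cohomologique vaut $1$ pour $\ell\neq p.$ Il s'ensuit $R^{q}j_{*}\mathbb{F}_{\ell}=0$ pour $q\geq 2,$ et $R^{1}j_{*}\mathbb{F}_{\ell}$ est un gratte-ciel aux points fermés; comme les corps résiduels y sont algébriquement clos, $H^{p}(X^{\prime},\,R^{1}j_{*}\mathbb{F}_{\ell})=0$ pour $p\geq 1.$ Enfin, $R^{0}j_{*}\mathbb{F}_{\ell}=\mathbb{F}_{\ell}$ par normalité et irréductibilité de $X^{\prime}.$ Il en résulte qu'aucune différentielle n'affecte les positions $E_{r}^{p,0}$ pour $p\geq 3,$ et l'annulation $E_{\infty}^{p,0}=H^{p}(X^{\prime},\,\mathbb{F}_{\ell})=0$ en découle.

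Le point potentiellement délicat est l'analyse fine des $R^{q}j_{*}\mathbb{F}_{\ell}$ aux points fermés; une approche alternative déjà évoquée avant l'énoncé est d'appliquer le théorème de pureté absolue de Gabber au couple régulier $(X^{\prime},Z),$ pour un fermé $Z$ non-vide de points fermés, ce qui donne $Ri^{!}\mathbb{F}_{\ell}\simeq \mathbb{F}_{\ell}(-1)[-2].$ Via le triangle de localisation $i_{*}Ri^{!}\mathbb{F}_{\ell}\rightarrow \mathbb{F}_{\ell}\rightarrow Rj_{*}\mathbb{F}_{\ell}\xrightarrow{+1},$ joint à l'annulation $H^{j}(Z,\,-)=0$ pour $j\geq 1$ (qui provient de nouveau des corps résiduels algébriquement clos), on identifie $H^{i}(X^{\prime},\,\mathbb{F}_{\ell})$ à $H^{i}(U,\,\mathbb{F}_{\ell})$ pour $i\geq 3;$ en passant à la colimite sur les ouverts $U\subseteq X^{\prime}$ de complémentaire fini non-vide, on retombe sur $H^{i}(\eta,\,\mathbb{F}_{\ell})=0$ en degrés $\geq 2$ par la proposition \ref{Prop l dimension Funktionenkoerper}, d'où l'énoncé.
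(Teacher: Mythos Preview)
Your computation of $H^{i}(X^{\prime},\mathbb{F}_{\ell})=0$ for $i\geq 3$ is correct: both the Leray spectral sequence argument and the absolute purity argument are valid, and both appear in the paper (purity is the main proof, the Leray-type argument is exactly the remark that follows it). The problem lies in your initial reduction.

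You invoke Lemma~\ref{Reduktion von allgemein konstru zu konstant verzweigte} to reduce to sheaves of the form $\pi_{*}(M)$, but that lemma is about the comparison morphism $\Phi_{\mathcal{F}}$, not about vanishing. Its proof produces a resolution $0\to\mathcal{F}\to\mathcal{G}_{0}\to\mathcal{G}_{1}\to\cdots$ by sheaves of the special form; this propagates an \emph{isomorphism} between two cohomology theories (five-lemma on both sides), but it does not propagate vanishing in a fixed range: from $0\to\mathcal{F}\to\mathcal{G}_{0}\to\mathcal{F}_{1}\to 0$ and $H^{i}(\mathcal{G}_{0})=0$ for $i\geq 3$ you only learn that $H^{3}(\mathcal{F})$ is a quotient of $H^{2}(\mathcal{F}_{1})$, which you do not control. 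The paper uses instead the trace method: one finds a finite étale cover $\dot{\pi}\colon V\to U$ of degree prime to $\ell$ trivialising the local system, normalises to get $\pi\colon X^{\prime}\to X^{\text{alg}}_{E,F}$, and obtains an \emph{injection} $H^{i}(X^{\text{alg}}_{E,F},j_{!}\mathcal{L})\hookrightarrow H^{i}(X^{\prime},j^{\prime}_{!}\dot{\pi}^{*}\mathcal{L})$, which is what makes the vanishing descend. Alternatively, you can bypass any reduction and run your Leray argument directly for an arbitrary constructible $\mathcal{F}$ on $X^{\text{alg}}_{E,F}$: the unit $\mathcal{F}\to j_{\eta *}j_{\eta}^{*}\mathcal{F}$ has skyscraper kernel and cokernel, so $H^{i}(\mathcal{F})\simeq H^{i}(j_{\eta *}j_{\eta}^{*}\mathcal{F})$ for $i\geq 2$, and then your stalk analysis of $R^{q}j_{\eta *}$ finishes the job; this is precisely the remark following the paper's proof.
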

\begin{proof}
L'idée est d'utiliser le résultat sur la $\ell$-dimension cohomologique du corps de fonctions de $X^{\text{alg}}_{E,F},$ couplé avec le théorème de pureté absolue de Gabber afin de pouvoir démontrer l'annulation cherchée de la cohomologie étale des revêtements ramifiés.
\\
On commence avec quelques réductions assez classiques ('la méthode de la trace') pour se ramener au cas d'un revêtement ramifié et le cas des coefficients constants.
\\
On peut trouver un ouvert non-vide $j\colon U\subseteq X^{\text{alg}}_{E,F},$ tel que $\mathcal{F}\!\mid_{U}$ est un système local en $\mathbb{F}_{\ell}$-modules; soit $\mathcal{L}$ ce système sur $U.$ On observe d'abord qu'il suffit de démontrer que
$$
H^{i}(X^{\text{alg}}_{E,F},\,j_{!}\mathcal{L})=0,
$$
pour $i\geq 3.$ Cela résulte du fait que le complement de $U$ dans $X^{\text{alg}}_{E,F}$ est une collection de spectres de corps algébriquement clos. D'après  \cite[Tag 0GJ0]{stacks} on peut trouver un revêtement étale fini $\dot{\pi}\colon V\rightarrow U$ de degré premier à $\ell,$ tel que $\dot{\pi}^{*}(\mathcal{L})$ admet une filtration finie avec des graduées de la forme $\underline{\mathbb{F}_{\ell}}_{V}.$ Maintenant on prend la normalisation de $V\rightarrow U$ dans $X^{\text{alg}}_{E,F};$ c'est un revêtement ramifié $\pi\colon X^{\prime}\rightarrow X^{\text{alg}}_{E,F}$ de degré premier à $\ell$ (c.f. la preuve du lemme \ref{Reduktion von allgemein konstru zu konstant verzweigte}). Cette situation est décrit par le diagramme commutatif suivant:
$$
\xymatrix{
V \ar[r]^{j^{\prime}} \ar[d]^{\dot{\pi}} & X^{\prime} \ar[d]^{\pi} & Z^{\prime} \ar[l]^{i^{\prime}} \ar[d] \\
U \ar[r]^{j} & X^{\text{alg}}_{E,F} & Z \ar[l]^{i}.
}
$$
On en déduit une injection
$$
H^{i}(X^{\text{alg}}_{E,F},\,j_{!}\mathcal{L}) \hookrightarrow H^{i}(X^{\prime},\,j^{\prime}_{!}(\dot{\pi}^{*}\mathcal{L}));
$$
ici on applique le fait que le degré du revêtement ramifié est premier à $\ell.$ En utilisant la filtration de $\dot\pi^{*}(\mathcal{L})$ on se réduit au cas où $\dot{\pi}^{*}(\mathcal{L})=\underline{\mathbb{F}_{\ell}}_{V}.$ Puis on peut observer qu'il suffit de démontrer que
$$
H^{i}(X^{\prime},\,\mathbb{F}_{\ell})=0,
$$
pour $i\geq 3.$ C'est encore le fait que le complement de $V$ dans $X^{\prime}$ est une collection finie de spectres de corps algébriquement clos, parce que $X^{\prime}$ est un schéma irréductible de dimension $1$, qui est fini sur $X^{\text{alg}}_{E,F}.$
\\
Finalement, on peut attaquer cette annulation. Soit $E(X^{\prime})$ le corps de fonctions sur $X^{\prime}$ et soit $$j_{\eta^{\prime}}\colon \Spec(E(X^{\prime}))\rightarrow X^{\prime}$$ l'inclusion du point générique. Soit $c\in H^{i}(X^{\prime},\mathbb{F}_{\ell}),$ pour $i\geq 3$ une classe de cohomologie. D'après le résultat sur la $\ell$-dimension cohomologique de $E(X^{\text{alg}}_{E,F}),$ on sait que
$$
j_{\eta^{\prime}}^{*}(c)\in H^{i}(\Spec(E(X^{\prime})),\mathbb{F}_{\ell})=0,
$$
pour $i\geq 2.$ Comme $H^{i}(\Spec(E(X^{\prime})),\,\mathbb{F}_{\ell})=\colim_{U^{\prime}}H^{i}(U^{\prime},\mathbb{F}_{\ell}),$ où la colimite porte sur tous les ouverts affines non-vides $U^{\prime}$ dans $X^{\prime},$ on en déduit l'existence d'un ouvert - qui dépend de la classe $c$ - $$j_{U^{\prime}}\colon U^{\prime}\hookrightarrow X^{\text{alg}}_{E,F},$$ tel que $j_{U^{\prime}}^{*}(c)=0$ dans $H^{i}(U^{\prime},\mathbb{F}_{\ell}).$ On peut appliquer le théorème de pureté absolue de Gabber \cite[Thm. 2.1.1.]{FujiwaraPurity} au complement de $U^{\prime}$ - ici on utilise que $X^{\prime}$ est noethérien et régulier et que ce complement est encore une collection finie de spectres de corps, donc régulier - pour déduire que pour $i\geq 3,$ on a
$$
H^{i}(X^{\prime},\,\mathbb{F}_{\ell})\simeq H^{i}(U^{\prime},\,\mathbb{F}_{\ell}),
$$
où l'isomorphisme est induit par $j_{U^{\prime}}^{*}(.).$ En effet, soit $T$ le complement de $U^{\prime}$ avec immersion fermée $i_{T}\colon T\hookrightarrow X^{\prime}.$ La pureté absolue donne
$$
i_{T}^{!}(\mathbb{F}_{\ell})\simeq \underline{\mathbb{F}_{\ell}}_{T}(-1)[-2]
$$
et il ne reste qu'à appliquer le triangle exact
$$
\xymatrix{
i_{T *}i_{T}^{!}\mathbb{F}_{\ell} \ar[r] & \mathbb{F}_{\ell} \ar[r] & Rj_{U^{\prime} *}\mathbb{F}_{\ell} \ar[r]^{+1} &.
}
$$
Si on applique cet argument à toute classe de cohomologie (ce qui force à changer les ensembles ouverts $U^{\prime}$) on en déduit l'annulation cherchée.
\end{proof}
\begin{Remark_french}
On peut aussi éviter l'utilisation de la pureté absolue en utilisant que $X^{\text{alg}}_{E,F}$ est un schéma noethérien de dimension $1$ avec tous les corps résiduels en tous les points fermés algébriquement clos, pour déduire que pour tout faisceau constructible $\mathcal{F}$ sur $X^{\text{alg}}_{E,F},$ le morphisme $\mathcal{F}\rightarrow j_{\eta *}j_{\eta}^{*}\mathcal{F}$ induit un isomorphisme sur les groupes de cohomologie en degrés $i\geq 2.$ Ici $j_{\eta}\colon \Spec(E(X^{\text{alg}}_{E,F}))\rightarrow X^{\text{alg}}_{E,F}$ est l'inclusion du point générique.\footnote{Voici l'explication: on observe d'abord que le noyau et le co-noyau du morphisme d'adjonction
$$
\psi\colon \mathcal{F}\rightarrow j_{\eta *}j_{\eta}^{*}\mathcal{F}
$$
sont des faisceaux grattes-ciels. Comme tous les corps résiduels des points fermés sont algébriquement clos, $H^{i}(\ker(\psi))=H^{i}(\text{coker}(\psi))=0,$ pour $i>0.$ Après on peut regarder les suites exactes
$$
\xymatrix{
0 \ar[r] & \ker(\psi) \ar[r] & \mathcal{F} \ar[r] & \text{im}(\psi) \ar[r] & 0
}
$$
et
$$
\xymatrix{
0 \ar[r] & \text{im}(\psi) \ar[r] &  j_{\eta *}j_{\eta}^{*}\mathcal{F}  \ar[r] & \text{coker}(\psi) \ar[r] & 0,
}
$$
et en déduire que $H^{i}(X,\mathcal{F})\simeq H^{i}(X,j_{\eta *}j_{\eta}^{*}\mathcal{F}),$ pour $i\geq 2.$ Maintenant on peut utiliser la suite spectrale de Leray pour $F:=j_{\eta}^{*}\mathcal{F}$ et le morphisme $j_{\eta};$ elle dit qu'il faut analyser
$$
H^{i}(\text{Frac}(\mathcal{O}_{X^{\text{alg}},x}^{\text{h}}),F).
$$ 
Néanmoins le corps de fraction du hensélisé $\mathcal{O}_{X^{\text{alg}},x}^{\text{h}}$ est une extension algébrique du corps des fonctions et donc $\text{cd}_{\ell}(\text{Frac}(\mathcal{O}_{X^{\text{alg}},x}^{\text{h}}))\leq \text{cd}_{\ell}(E(X^{\text{alg}}_{E,F})).$ Si on met tout cela ensemble, on en déduit l'énoncé cherché.
}
\end{Remark_french}
\newpage
\subsection{Le cas $\ell=p$}\label{subsection der Fall l=p}
Dans le cas $\ell=p$ je n'arrive à démontrer l'annulation de la cohomologie étale en degrés $\geq 3$ d'un faisceau de la forme $\mathcal{F}^{\text{ad}}$ dans $\widetilde{(X^{\text{ad}}_{E,F})_{\text{ét}}}$, où $\mathcal{F}$ est un $\mathbb{F}_{p}$-module constructible sur le site étale de $X^{\text{alg}}_{E,F},$ que si l'hypothèse suivante est vérifiée:
\begin{enumerate}
\item[(Stein)]Soit $U\subset X^{\text{alg}}_{E,F}$ un ouvert algébrique avec adification $U^{ad}.$ Alors l'espace adique fortement noethérien $U^{\text{ad}}$ est pré-perfectoïde Stein, i.e. soit $E_{\infty}$ la $\mathbb{Z}_{p}$-extension cyclotomique de $E,$ on suppose donc 
$$
U^{\text{ad}}=\bigcup_{n\in \mathbb{N}} U_{n}
$$ 
est une réunion croissante d'ouverts affinoïdes, $\text{res}\colon \mathcal{O}(U_{n+1})\rightarrow \mathcal{O}(U_{n})$ est d'image dense et $$U_{n}\times_{\Spa(E)}\Spa(\widehat{E}_{\infty})$$ est affinoïde perfectoïde.
\end{enumerate}
\begin{proposition}\label{Prop Fall l=p}
Si l'hypothèse (Stein) est vérifiée, 
$$
H^{i}(X^{\text{ad}}_{E,F},\,\mathcal{F}^{\text{ad}})=0,
$$
pour tout $\mathbb{F}_{p}$-module constructible $\mathcal{F}$ sur le site étale de $X^{\alg}_{E,F}$ et $i\geq 3.$
\end{proposition}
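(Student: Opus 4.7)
La stratégie se décompose en quatre étapes, suivant le plan tracé dans l'introduction.

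\textbf{Étape 1 (réduction par la méthode de la trace).} Soit $\mathcal{F}$ un $\mathbb{F}_{p}$-module constructible sur $X^{\alg}_{E,F}.$ On commence par trouver un ouvert dense $j\colon U\subseteq X^{\alg}_{E,F}$ tel que $\mathcal{F}\!\mid_{U}=:\mathcal{L}$ soit un système local; le fermé complémentaire est une union finie de spectres de corps algébriquement clos, dont la cohomologie étale est concentrée en degré $0.$ D'après \cite[Tag 0GJ0]{stacks}, on peut trouver un revêtement fini étale $\dot{\pi}\colon V\rightarrow U$ de degré $d$ premier à $p,$ tel que $\dot{\pi}^{*}(\mathcal{L})$ soit itérément extension triviale de $\underline{\mathbb{F}_{p}}_{V}.$ Notons $\pi\colon X^{\prime}\rightarrow X^{\alg}_{E,F}$ la normalisation de $X^{\alg}_{E,F}$ dans $V,$ qui est un revêtement ramifié (comme dans la preuve du lemme \ref{Reduktion von allgemein konstru zu konstant verzweigte}). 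Par la méthode de la trace (appliquée après adification), on obtient une injection $H^{i}(X^{\ad}_{E,F},\mathcal{F}^{\ad})\hookrightarrow H^{i}(X^{\prime \ad},(j^{\prime \ad})_{!}\mathbb{F}_{p}),$ ce qui ramène l'énoncé à vérifier l'annulation $H^{i}(X^{\prime \ad},(j^{\prime \ad})_{!}\mathbb{F}_{p})=0$ pour $i\geq 3.$

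\textbf{Étape 2 (utilisation de l'hypothèse (Stein) pour $V^{\ad}$).} L'ouvert algébrique $U\subseteq X^{\alg}_{E,F}$ admet une adification $U^{\ad}$ qui est pré-perfectoïde Stein par hypothèse, et comme $V^{\ad}\rightarrow U^{\ad}$ est fini étale, $V^{\ad}$ hérite de la même propriété (l'intersection avec $\Spa(\widehat{E}_{\infty})$ étant toujours affinoïde perfectoïde par changement de base fini étale). En utilisant les résultats de type Scholze sur les espaces Stein perfectoïdes, on en déduit $H^{i}(V^{\ad},\mathbb{F}_{p})=0$ pour $i\geq 3.$ Le triangle exact
$$
\xymatrix{
(j^{\prime \ad})_{!}\mathbb{F}_{p} \ar[r] & Rj^{\prime \ad}_{*}\mathbb{F}_{p} \ar[r] & i^{\prime \ad}_{*}i^{\prime \ad *}Rj^{\prime \ad}_{*}\mathbb{F}_{p} \ar[r]^-{+1} &
}
$$
ramène alors le problème à démontrer que le complexe $i^{\prime \ad}_{*}i^{\prime \ad *}Rj^{\prime \ad}_{*}\mathbb{F}_{p}$ n'a pas de cohomologie en degrés $\geq 2.$

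\textbf{Étape 3 (passage aux diamants et $\mathbb{Z}_{p}$-recouvrement cyclotomique).} Pour calculer les germes de $i^{\prime \ad *}Rj^{\prime \ad}_{*}\mathbb{F}_{p}$ en les points de $Z^{\prime \ad},$ on utilise l'écriture
$$
R\Gamma(Z^{\prime \ad},i^{\prime \ad *}Rj^{\prime \ad}_{*}\mathbb{F}_{p}) = \underset{W^{\prime}}{\colim}\, R\Gamma(W^{\prime}-Z^{\prime \ad},\mathbb{F}_{p}),
$$
où $W^{\prime}$ parcourt les voisinages ouverts quasi-compacts de $Z^{\prime \ad}$ dans $X^{\prime \ad}.$ On passe ensuite aux diamants et on introduit le recouvrement pro-étale perfectoïde $(X^{\prime \ad\lozenge})_{\infty}:=X^{\prime \ad \lozenge}\times_{\Spd(E)}\Spd(\widehat{E}_{\infty})\rightarrow X^{\prime \ad\lozenge},$ galoisien de groupe $\mathbb{Z}_{p}.$ La suite spectrale de Hochschild-Serre transforme la colimite précédente en
$$
R\Gamma(\mathbb{Z}_{p},\,\underset{\widetilde{W}^{\prime}}{\colim}\, R\Gamma(\widetilde{W}^{\prime}-(Z^{\prime \ad})_{\infty},\mathbb{F}_{p})),
$$
où $\widetilde{W}^{\prime}$ parcourt les voisinages ouverts quasi-compacts $\underline{\mathbb{Z}_{p}}$-invariants de $(Z^{\prime \ad})_{\infty}.$ Sur la partie perfectoïde, la $\mathbb{F}_{p}$-dimension cohomologique des ouverts qcqs est $\leq 1$ (au-dessus d'un corps perfectoïde algébriquement clos en caractéristique résiduelle $p$), donc chaque $R\Gamma(\widetilde{W}^{\prime}-(Z^{\prime \ad})_{\infty},\mathbb{F}_{p})$ est concentré en degrés $\leq 1.$

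\textbf{Étape 4 (le point clé: annulation de la $\mathbb{Z}_{p}$-cohomologie).} Comme $\cd_{p}(\mathbb{Z}_{p})=1,$ on risque a priori d'obtenir de la cohomologie en degré total $2.$ C'est ici que se trouve le point essentiel de la preuve: on exploite que $E_{\infty}=\bigcup_{n}E_{n}$ avec $G_{n}=\Gal(E_{n}/E)\simeq \mathbb{Z}/p^{n}\mathbb{Z},$ pour choisir un système fondamental de voisinages $\underline{\mathbb{Z}_{p}}$-invariants $\widetilde{W}^{\prime}$ provenant, par changement de base, de voisinages au niveau fini $E_{n},$ de sorte que
$$
\underset{\widetilde{W}^{\prime}}{\colim}\,H^{j}(\widetilde{W}^{\prime}-(Z^{\prime \ad})_{\infty},\mathbb{F}_{p}) = \underset{n}{\colim}\,\text{Ind}_{\{e\}}^{G_{n}}(M_{n}^{(j)})
$$
pour certains $\mathbb{F}_{p}$-modules discrets $M_{n}^{(j)}$ munis de l'action triviale de $G_{n}.$ On applique alors le lemme \ref{Lemma zum Verschwinden der Gruppenkohomologie} qui affirme qu'une telle représentation de $\mathbb{Z}_{p}$ n'a pas de $\mathbb{Z}_{p}$-cohomologie, et le lemme \ref{Lemma Verschwinden lokal Stein} qui traduit ceci en l'annulation
$$
\underset{\widetilde{W}^{\prime}}{\colim}\,H^{i}(\widetilde{W}^{\prime}-(Z^{\prime \ad})_{\infty},\mathbb{F}_{p})=0
$$
pour $i\geq 2,$ l'inclusion $U^{\ad}\hookrightarrow X^{\ad}_{E,F}$ étant 'localement Stein'. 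En combinant avec le triangle exact de l'étape 2, on obtient l'annulation cherchée.

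L'obstacle principal est bien évidemment l'étape 4: la construction d'un système fondamental de voisinages $\underline{\mathbb{Z}_{p}}$-invariants de $(Z^{\prime \ad})_{\infty}$ suffisamment explicite pour que chaque $\widetilde{W}^{\prime}$ descende effectivement à un niveau fini $E_{n}$ (de sorte que le $G_{n}$ agisse librement en permutant les composantes) repose de façon cruciale sur l'hypothèse (Stein), qui garantit que la perfectoïdisation se comporte bien sur des ouverts arbitraires, et non seulement sur les affinoïdes.
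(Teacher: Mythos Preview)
Votre plan suit fidèlement celui de l'article, mais il y a deux confusions réelles à signaler.

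\textbf{Sur l'Étape 3.} L'affirmation selon laquelle chaque $R\Gamma(\widetilde{W}^{\prime}-(Z^{\prime \ad})_{\infty},\mathbb{F}_{p})$ est concentré en degrés $\leq 1$ n'est ni démontrée ni utilisée dans l'article, et elle est probablement fausse : un voisinage épointé n'est pas affinoïde, et la borne générale pour un diamant spatial de dimension de Krull $1$ sur $F$ ne donne que $\cd_{p}\leq 2$. L'article n'affirme l'annulation en degrés $\geq 2$ que pour la \emph{colimite}, et c'est précisément le contenu du lemme \ref{Lemma Verschwinden lokal Stein} : on y exhibe un système cofinal de voisinages $W$ tels que $W-Z^{\ad}$ soit pré-perfectoïde Stein (donc avec $H^{\geq 2}(-,\mathbb{F}_{p})=0$ après basculement et Artin--Schreier). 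L'article note d'ailleurs explicitement en note de bas de page qu'il ne sait pas si les termes individuels $H^{j}(f_{n}^{-1}(\ldots),\mathbb{F}_{p})$ s'annulent pour $j\geq 2$, mais que cela n'a pas d'importance.

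\textbf{Sur l'Étape 4 et le rôle de l'hypothèse (Stein).} Vous inversez le rôle des deux lemmes et vous vous trompez sur l'endroit où (Stein) intervient. La logique correcte est : (i) le lemme \ref{Lemma Konstruktion des guten Systems der Umgebungen} construit un système fondamental de voisinages $\underline{\mathbb{Z}_{p}}$-invariants de la forme $f_{n}^{-1}(\coprod_{g\in G_{n}} g\cdot W_{i}^{\prime(n)})$, ce qui donne l'identification $\colim_{\widetilde{W}^{\prime}}H^{j}(\ldots)=\colim_{n}\,\text{Ind}_{\{e\}}^{G_{n}}(M_{n}^{(j)})$ ; (ii) le lemme \ref{Lemma zum Verschwinden der Gruppenkohomologie} (via Shapiro) annule alors $H^{\geq 1}(\mathbb{Z}_{p},-)$ sur ces représentations, ce qui fait dégénérer la suite spectrale en $H^{i}(Z^{\prime\ad},\ldots)=H^{0}(\mathbb{Z}_{p},\colim H^{i}(\ldots))$ ; (iii) le lemme \ref{Lemma Verschwinden lokal Stein} donne enfin l'annulation de $\colim H^{i}(\ldots)$ pour $i\geq 2$. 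Ces trois ingrédients sont \emph{inconditionnels} : l'article dit explicitement que le corollaire \ref{Korollar Hyp Stein impliziert Verschwinden offenes} (l'annulation de $H^{\geq 3}(V^{\ad},\mathbb{F}_{p})$, votre Étape~2) est le \emph{seul} endroit où l'hypothèse (Stein) est utilisée. La construction du bon système de voisinages n'en dépend pas du tout --- elle repose uniquement sur le fait que les points de $Z^{\prime\ad}_{n}$ sont fermés et maximaux et que $G_{n}$ agit librement.
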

Sous l'hypothèse (Stein), la conjecture de comparaison \ref{Vermutung Vergleich} implique ainsi la conjecture d'annulation \ref{Vermutung Verschwinden}.
\\
Maintenant je donne quelques commentaires sur la condition (Stein).
\begin{Remark_french}\label{Bermerkung Diskussion Hypothese Stein}
\begin{enumerate}
\item[(a):] On observe d'abord que l'hypothèse (Stein) est en fait plus forte que ce dont on a vraiment besoin. Soit $\dot{\pi}^{\ad}\colon V^{\ad}\rightarrow U^{\ad}$ l'adification d'un revêtement fini étale $\dot{\pi}\colon V\rightarrow U$ (de degré premier à $p$). Comme avant (c.f. lemme \ref{Reduktion von allgemein konstru zu konstant verzweigte}) on prend la normalisation pour construire un revêtement ramifié $\pi\colon X^{\prime}\rightarrow X^{\alg}_{E,F}$ avec adification $\pi^{\ad}\colon X^{\prime \ad}\rightarrow X^{\ad}_{E,F}.$ Alors on a besoin de savoir que
$$
H^{i}(V^{\text{ad}},\,\mathbb{F}_{p})=0,
$$
pour $i\geq 3.$ Il suffit de savoir que l'on a
$$
H^{i}(V^{\text{ad} \flat}_{\infty},\,\mathcal{O})=0,
$$
pour $i\geq 1,$ où $V^{\text{ad} \flat}_{\infty}:=V^{\text{ad}}\times_{\Spa(E)}\Spa(\widehat{E}_{\infty}).$ C'est une conséquence de l'hypothèse (Stein), mais je peux imaginer qu'on pourrait attaque cet énoncé d'annulation directement.
\item[(b):]
On garde les notation du point (a) avant. 
Je dois expliquer un peu pourquoi je pense que l'hypothèse (Stein) est vraie: sur le plan conceptuel, on peut observer que par analogie entre $X^{\text{ad}}_{E,F}$ et une surface de Riemann compacte il est naturel de penser que chaque ouvert de Zariski dans $X^{\text{ad}}_{E,F}$ est un espace de Stein. Cela prédit qu'on a
\begin{equation}\label{Gleichung Vorhersage Stein}
H^{i}(V^{\text{ad}},\,\mathcal{O})=0,
\end{equation}
pour $i\geq 1.$ 
\\
Afin de donner un peu de substance à cette prédiction je veux expliquer comment on peut démontrer l'égalité (\ref{Gleichung Vorhersage Stein}).
\\
Comme $\dot{\pi}^{\text{ad}}$ est fini étale, il suffit de démontrer qu'on a
$$
H^{i}(U^{\text{ad}},\,\dot{\pi}^{\text{ad}}_{*}\mathcal{O})=0,
$$
pour $i\geq 1.$ Le fibré vectoriel $\mathcal{E}=\pi^{\text{ad}}_{*}(\mathcal{O})$ étend $\dot{\pi}^{\text{ad}}_{*}(\mathcal{O}).$ D'après un résultat d'amplitude dû à Kedlaya-Liu (et de nouveau écrit par Fargues-Scholze \cite[Thm. II.2.6.]{FarguesScholze}) il existe un nombre naturel $n>>0,$ tel que 

$$H^{i}(X^{\text{ad}}_{E,F},\,\mathcal{E}(n))=0$$ 
pour $i>0.$
\\
J'utilise maintenant la suite de Mayer-Vietoris:
\\
Comme $U^{\text{ad}}=X^{\text{ad}}-\lbrace x_{1},...,x_{n} \rbrace,$ où $x_{i}\in |X^{\text{ad}}_{E,F}|^{\text{cl}}$ (ces sont des points classiques), on peut trouver un ouvert affinoïde $Y_{I}\subset Y_{[1,q]},$ où $I\subset (0,1)$ est un intervalle compact, tel que $\lbrace x_{1},...,x_{n} \rbrace \subset Y_{I}$ et le complémentaire $Y_{I}-\lbrace x_{1},...,x_{n} \rbrace$ est de Stein (c.f. la preuve du lemme \ref{Lemma Verschwinden lokal Stein}). La dernière observation dont on a besoin est que le fibré en droites $\mathcal{O}_{X^{\text{ad}}}(n)$ est trivial sur $U^{\text{ad}}.$\footnote{De fait, $U$ est le spectre d'un anneau principal et la restriction de $\mathcal{O}_{X^{\text{ad}}}(n)$ à $U^{\text{ad}}$ s'identifie à le tiré en arrière de la restriction $\mathcal{O}_{X^{\text{alg}}}(n)\!\mid_{U}$ le long du morphisme
$$
U^{\text{ad}}\rightarrow U.
$$}
\\
J'applique la suite exacte de Mayer-Vietoris au recouvrement suivant:
$$
X^{\text{ad}}_{E,F}=Y_{I}\cup U^{\text{ad}}, \text{ }Y_{I}\cap U^{\text{ad}}=Y_{I}-\lbrace x_{1},...,x_{n} \rbrace
$$
et au faisceau $\mathcal{E}(n).$ Comme $H^{i}(X^{\text{ad}}_{E,F},\mathcal{E}(n))=0$ et $H^{i}(Y_{I}-\lbrace x_{1},...,x_{n} \rbrace,\mathcal{E}(n))=0$ pour $i>0,$ on en déduit 
$$
H^{i}(U^{\text{ad}},\,\mathcal{E}(n)\!\mid_{U^{\text{ad}}})=H^{i}(U^{\text{ad}},\,\dot{\pi}^{\text{ad}}_{*}\mathcal{O})=0,
$$
pour $i>0.$
\end{enumerate}
\end{Remark_french}
La preuve de la proposition \ref{Prop Fall l=p} dépend de quelques lemmes et on va donner la preuve après cette préparation. 
\subsubsection{Préparations concernant la cohomologie des groupes:}\label{subsubsection wie man Zp Koho toetet}
Dans la preuve on a besoin du fait suivant: soient $$G=\underset{n\in \mathbb{N}} \lim G_{n}$$ un groupe pro-fini, présenté comme limite inverse d'un système de groupes finis $((G_{n})_{n\in \mathbb{N}},\pi_{m,n}),$ et $((N_{n})_{n\in \mathbb{N}},f_{n,m})$ un système inductif de $G_{n}$-modules. On dit qu'une telle paire 

$$((G_{n},\pi_{m,n})_{n\in \mathbb{N}},(N_{n}, f_{n,m})_{n\in \mathbb{N}})$$

forme une paire compatible si pour $m\geq n,$ on a
$$
f_{n,m}(\pi_{n,m}(g_{m})\cdot x_{n})=g_{m}\cdot f_{n,m}(x_{n}),
$$
pour tout
$g_{m}\in G_{m},$ $x_{n}\in N_{n}$.
Dans une telle situation on peut munir $N= \colim_{n} N_{n}$ d'une structure de $G$-module, où $G$ agit sur $N_{n}$ via le quotient $G\rightarrow G_{n}.$  La $G$-cohomologie du module $N$ est décrite par le lemme suivant:
\begin{Lemma_french}\label{Lemma zum Verschwinden der Gruppenkohomologie}
Si $((G_{n},\pi_{m,n})_{n\in \mathbb{N}},(N_{n}, f_{n,m})_{n\in \mathbb{N}})$ est une paire compatible, on a
$$
H^{j}(G,\,N)=\underset{n\in \mathbb{N}} \colim  H^{j}(G_{n},\,N_{n}).
$$
\end{Lemma_french}
\begin{proof}
Ce lemme est exactement \cite[Proposition 1.5.1.]{CohoofNumberFields}.
\end{proof}
On va appliquer ce lemme dans la situation suivante: soient $(M_{n})_{n\in \mathbb{N}}$ des groupes abéliens discrets, munis de l'action triviale de $G_{n},$ et on suppose qu'ils forment un système inductif, i.e. pour $n \leq m,$ on a une application $M_{n}\rightarrow M_{m}.$ Soit $N_{n}=\text{Ind}_{\lbrace e \rbrace}^{G_{n}}(M_{n})=\text{Hom}(G_{n},M_{n});$ on a alors une application $N_{n}\rightarrow N_{m}$ donnée par 
\begin{align*}
\text{Hom}(G_{n},M_{n}) & \rightarrow \text{Hom}(G_{n+1},M_{n}) \\
 & \rightarrow \text{Hom}(G_{n+1},M_{n+1})
\end{align*} et $((G_{n})_{n\in \mathbb{N}},(N_{n})_{n\in \mathbb{N}})$ forment ainsi une paire compatible. On en déduit par le lemme de Shapiro (\cite[Proposition 1.3.7.]{CohoofNumberFields}) le fait suivant:
\begin{equation}\label{Verschwinden der Gruppenkohomologie}
H^{j}(G,\, \underset{n\in \mathbb{N}} \colim N_{n})=\underset{n\in \mathbb{N}} \colim H^{j}(G_{n},\,\text{Ind}_{\lbrace e \rbrace}^{G_{n}}(M_{n}))=0,
\end{equation}
égalité cruciale pour la preuve de la proposition \ref{Prop Fall l=p}.
\subsubsection{Préparations géométriques}\label{subsubsection wie man die koho des offenen toetet}
De plus, on doit expliquer deux énoncés d'annulation qui sont essentiels dans la preuve de la propostion \ref{Prop Fall l=p}. Le premier lemme est standard:
\begin{Lemma_french}\label{Lemma verschwinden pre-perfectoid Stein}
Soit $S$ un espace pré-perfectoïde Stein sur $\Spa(E).$ Alors on a
$$
H^{i}(S,\,\mathbb{F}_{p})=0,
$$
pour $i\geq 3.$
\end{Lemma_french}
\begin{proof}
Soit $E_{\infty}/E$ la $\mathbb{Z}_{p}$-extension cyclotomique. Par hypothèse on peut trouver un recouvrement
$$
S=\bigcup_{n\in \mathbb{N}}S_{n},
$$
tel que
$$
S_{\infty}:=S\times_{\Spa(E)}\Spa(\widehat{E}_{\infty})=\bigcup_{n\in \mathbb{N}}S_{n}\times_{\Spa(E)}\Spa(\widehat{E}_{\infty}),
$$
où $S_{n,\infty}:=S_{n}\times_{\Spa(E)}\Spa(\widehat{E}_{\infty})$ est un espace affinoïde perfectoïde et les morphismes de restriction $\text{res}\colon \mathcal{O}(S_{n+1,\infty})\rightarrow \mathcal{O}(S_{n,\infty})$ ont image dense. En utilisant la suite spectrale de Hochschild-Serre et $\text{cd}_{p}(\mathbb{Z}_{p})=1,$ il suffit de démontrer
$$
H^{i}(S_{\infty},\,\mathbb{F}_{p})=0,
$$
pour $i\geq 2.$ L'espace perfectoïde $S_{\infty}^{\flat}$ est un espace de Stein: comme on suppose que tous les $S_{n,\infty}$ sont affinoïdes perfectoïdes, on a 
$$
(S_{\infty})^{\flat}=\bigcup_{n\in \mathbb{N}} (S_{n,\infty})^{\flat},
$$
où chaque $S_{n,\infty}^{\flat}$ est encore affinoïde. Si $A\rightarrow B$ est une application continue entre anneaux de Tate qui sont perfectoïdes et telle que l'image est dense c'est aussi le cas pour l'application $A^{\flat}\rightarrow B^{\flat},$ c.f. \cite[Lemma 2.8.4]{KedlayaArizona}.
\\
D'après le résultat de Kedlaya-Liu \cite[Thm. 2.6.5]{KedlayaLiuImperfect}, on a
$$
H^{i}(S_{\infty}^{\flat},\,\mathcal{O})=0,
$$
pour $i\geq 1.$ 
\\
La suite exacte d'Artin-Schreier implique donc bien que 
$$
H^{i}(S_{\infty},\,\mathbb{F}_{p})=H^{i}(S_{\infty}^{\flat},\,\mathbb{F}_{p})=0,
$$
pour $i\geq 2.$
\end{proof}
\begin{Corollary_french}\label{Korollar Hyp Stein impliziert Verschwinden offenes}
Soient $U\hookrightarrow X^{\alg}_{E,F}$ un sous-schéma ouvert non-vide et $U^{\ad}\hookrightarrow X^{\ad}_{E,F}$ l'adification associé. De plus, on considère un revêtement finit étale $V\rightarrow U$ et $V^{\ad}\rightarrow U^{\ad}$ l'adification associé. Si l'hypothèse (Stein) est vérifiée, on a
$$
H^{i}(V^{\ad},\mathbb{F}_{p})=0,
$$
pour $i\geq 3.$
\end{Corollary_french}
\begin{proof}
On observe d'abord qu'un revêtement fini d'un espace de Stein est encore un espace de Stein. Le théorème de presque pureté de Scholze \cite[Thm. 1.10]{ScholzePerfectoid} implique alors que si $U^{\text{ad}}$ est pré-perfectoïde Stein aussi $V^{\text{ad}}$ l'est. On conclut en utilisant le lemme \ref{Lemma verschwinden pre-perfectoid Stein}.
\end{proof}
Ce corollaire est le seul endroit où j'ai besoin de l'hypothèse (Stein).
\\
On va aussi utiliser un autre ingrédient géométrique, qui est inconditionnel, parce que c'est seulement un énonce local. Celui est le suivant:
\begin{Lemma_french}\label{Lemma Verschwinden lokal Stein}
Soient $X^{\prime}\rightarrow X^{\alg}_{E,F}$ un revêtement fini plat, qui est ramifie le long de $Z\hookrightarrow X^{\alg}_{E,F}$ et $X^{\prime \ad}\rightarrow X^{\ad}_{E,F}$ l'adification associé. Soit $Z^{\prime \ad}$ le pré-image de $Z^{\ad}
$ dans $X^{\prime \ad}.$ On considère de plus la $\mathbb{Z}_{p}$-extension cyclotomique $E_{\infty}$de $E$ et le diamant $X^{\prime \ad \lozenge}_{\infty}:= X^{\prime \ad \lozenge}\times_{\Spd(E)}\Spd(\widehat{E}_{\infty})$ et $Z^{\prime \ad}_{\infty}$ le pré-image de $Z^{\prime \ad}$ dans $X^{\prime \ad \lozenge}_{\infty}$. Soit $\widetilde{\mathcal{W}}^{\prime}$ un système fondamental des voisinages qcqs ouverts $\underline{\mathbb{Z}_{p}}$-invariants de $Z^{\prime \ad}_{\infty}\hookrightarrow X^{\prime \text{ad}}_{\infty}.$  Alors on a
$$
\underset{\widetilde{W}^{\prime}\in \widetilde{\mathcal{W}^{\prime}}}\colim H^{i}(\widetilde{W}^{\prime}-(Z^{\prime \ad})_{\infty},\,\mathbb{F}_{p})=0,
$$
pour $i\geq 2.$
\end{Lemma_french}
\begin{Remark_french}
On peut observer que le diamant $X^{\prime \ad \lozenge}_{\infty}$ est réprésentable par un espace perfectoïde en utilisant l'existence d'une 'perfectoïdisafiction' \cite{bhatt2022prisms}. 1.16.(1)]: c'est un problème local et on prend $\Spa(R,R^{+})$ affinoïde dans $X^{\text{ad}}_{E,F}$ avec pré-image $\Spa(R^{\prime},R^{\prime +})$ dans $X^{\prime \text{ad}},$ tel qu'on a $\Spa(R,R^{+})\times_{\Spa(E)}\Spa(\widehat{E}_{\infty})=\Spa(S,S^{+}),$ où $(S,S^{+})$ est affinoïde perfectoïde. On peut par exemple travailler avec un recouvrement de $X^{\ad}_{E,F}=Y_{[1,q]}/Y_{[1,1]}\simeq Y_{[q,q]}$ donné par $\Spa(B_{[1,\rho]},B_{[1,\rho]}^{+})$ et $\Spa(B_{[\rho,q]},B_{[\rho,q]}^{+}),$ $\rho\in [1,q].$ Comme le foncteur qui associe le diamant à un espace adique analytique commute aux limites, on a $\Spd(R^{\prime},R^{\prime +})\times_{\Spd(E)}\Spd(\widehat{E}_{\infty})=\Spd(S^{\prime},S^{\prime +}),$ où $S^{\prime}=S\otimes_{R}R^{\prime}$ et $S^{\prime +}$ est la clôture integrale de $S^{+}$ dans $S^{\prime}.$ Puisque $S^{+}\rightarrow S^{\prime +}$ est un morphisme intègre, on peut appliquer le résultat de Bhatt-Scholze ci-dessus pour trouver un anneau perfectoïde entier $S^{\prime +}_{\text{perfd}},$ universel pour l'existence d'un morphisme $S^{\prime +}\rightarrow S^{\prime +}_{\text{perfd}}.$ Si $x$ est une pseudo-uniformisante dans $S,$ la paire affinoïde perfectoïde $(S^{\prime +}_{\text{perfd}}[1/x],(S^{\prime +}_{\text{perfd}}[1/x])^{+}),$ où $(S^{\prime +}_{\text{perfd}}[1/x])^{+}$ est la clôture integrale de $S^{\prime +}_{\text{perfd}}$ dans $S^{\prime +}_{\text{perfd}}[1/x],$  est l'espace perfectoïde cherché. Après, on peut recoller ces espaces perfectoïdes affinoïdes afin de construire l'espace perfectoïde qui représente $(X^{\prime \text{ad}})^{\lozenge}_{\infty}.$
\\
L'espace $Z^{\prime \ad}_{\infty}$ est un sous-espace Zariski-fermé dans l'espace perfectoïde qui représente $X^{\prime \ad \lozenge}_{\infty}.$
\end{Remark_french}
\begin{proof}
La preuve du lemme \ref{Lemma Verschwinden lokal Stein} consiste en l'explication de la suite d'identifications suivantes:
\begin{align*}
\underset{\tilde{W}^{\prime}} \colim H^{i}(\widetilde{W}^{\prime}-(Z^{\prime \text{ad}})_{\infty},\,\mathbb{F}_{p}) & = \underset{Z^{\prime \text{ad}}\subset W^{\prime} \subset X^{\prime \text{ad}}}\colim H^{i}((W^{\prime}-Z^{\prime \text{ad}})_{\infty},\,\mathbb{F}_{p}) \\
& = \underset{Z^{\text{ad}}\subset W \subset X^{\text{ad}}}\colim H^{i}((\pi^{-1}(W)-Z^{\prime \text{ad}})_{\infty},\,\mathbb{F}_{p}) \\
& = \underset{Z^{\text{ad}}\subset W \subset X^{\text{ad}}, W\subset Y_{[\rho^{q},\rho]} \text{affd}, \varphi(W)\cap W = \emptyset}\colim H^{i}((\pi^{-1}(W)-Z^{\prime \text{ad}})_{\infty},\,\mathbb{F}_{p}) \\
& = 0,
\end{align*}
où la dernière égalité est vraie pour $i\geq 2.$ Ici on note par $(.)_{\infty}$ le changement de base $.\times_{\Spd(E)}\Spd(\widehat{E}_{\infty}).$
\\
En effet, dans la première égalité on utilise le fait que chaque voisinage ouvert qcqs $\underline{\mathbb{Z}_{p}}$-invariant $\widetilde{W}\subset (X^{\prime \text{ad}})^{\lozenge}_{\infty}$ de $(Z^{\prime \text{ad}})_{\infty}$ descend en un voisinages ouvert qcqs $W^{\prime}\subset X^{\prime \text{ad}}$ de $Z^{\prime \text{ad}}.$ Dans la deuxième égalité, on utilise la possibilité de trouver un système cofinal de tels voisinages de la forme $\pi^{-1}(W),$ où $W$ est un voisinage qcqs ouvert de $Z^{\text{ad}}.$ De fait, comme les points différents $z^{\prime}\in Z^{\prime \ad}$ n'ont pas de généralisations communes, on peut dans un premier temps remplacer le voisinage $W^{\prime}$ par un voisinage de la forme $\coprod_{i=1}^{m}W_{i}^{\prime},$ où $W_{i}^{\prime}$ est un voisinage ouvert qcqs de $z^{\prime}_{i}\in Z^{\prime \text{ad}}.$ Dans un deuxième temps, comme $\pi^{\text{ad}}\colon X^{\prime \text{ad}}\rightarrow X^{\text{ad}}_{E,F}$ est fini et ainsi une application fermée, il existe un voisinage qcqs ouvert $W$ de $Z^{\text{ad}},$ tel que $$\pi^{-1}(W)\subseteq \coprod_{i=1}^{m}W_{i}^{\prime}.$$ Dans la troisième égalité on a utilisé l'uniformisation $X^{\text{ad}}_{E,F}=Y_{[1,q]}/\varphi^{\mathbb{Z}}$ (i.e. on utilise le Frobenius pour identifier $Y_{[1,1]}$ et $Y_{[q,q]}$); on trouve ainsi un système cofinal de la forme $W\subset Y_{[1,q]}$ affinoïde, tel que $\varphi(W)\cap W=\emptyset.$
\\
Finalement, on va utiliser qu'on peut trouver un système cofinal des voisinages affinoïdes $W\subset Y_{[1,q]}$ de $Z^{\ad},$ tel que $W-Z^{\ad}$ est une union disjointe finie de espaces pré-perfectoïdes Stein: on écrit $Z^{\text{ad}}=\lbrace z_{1},..,z_{m} \rbrace,$ comme les localisations rationnels sont une base de la topologie de l'espace affinoïde $Y_{[1,q]},$ on peut raffiner chaque ouvert affinoïde dans $Y_{[1,q]}$, voisinage de $Z^{\ad},$ par un ouvert affinoïde de la forme 
$$
\coprod_{i=1}^{m}U_{i},
$$
où $U_{i}$ sont des localisation rationnels de $Y_{[1,q]},$ voisinage du point classique $z_{i}\in Z^{\ad}.$
Chaque point classique $z_{i}=V(f_{i})$ est un fermé de Zariski dans $U_{i}$ et on peut par conséquent considérer le recouvrement
$$
U_{i}-\lbrace z_{i} \rbrace=\bigcup_{n\in \mathbb{N}}W_{n}(z_{i}),
$$
où $W_{n}(z_{i})=\lbrace y\in U_{i}\colon |f_{i}(y)|\geq |\pi(y)|^{n}\neq 0 \rbrace;$ puisque l'ouvert $U_{i}$ est bien affinoïde, les $W_{n}(z_{i})$ restent affinoïdes et c'est donc bien un recouvrement qui fait de $U_{i}-\lbrace z_{i} \rbrace$ un espace de Stein.\footnote{Comme $U_{i}-\lbrace z_{i} \rbrace$ est l'ensemble Zariski-fermé défini par $Tf_{i}-1$ dans $U_{i}\times_{\Spa(E)}\mathbb{A}^{1 \ad}_{\Spa(E)}$.} Les $W_{n}(z_{i})\times_{\Spa(E)}\Spa(\widehat{E}_{\infty})$ sont des espaces affinoïdes perfectoïdes: dans un premier temps, on observe que $U_{i}\times_{\Spa(E)}\Spa(\widehat{E}_{\infty})$ sont des espaces affinoïdes perfectoïdes; en effet, c'est une localisation rational de l'espace affinoïde perfectoïde $Y_{[1,q]}\times_{\Spa(E)}\Spa(\widehat{E}_{\infty})$ (\cite[Thm. 6.3.(ii)]{ScholzePerfectoid}). Soit $f_{\infty}$ l'image de $f$ par l'application 
$$
\mathcal{O}(Y_{[1,q]})\rightarrow \mathcal{O}(Y_{[1,q],\infty}).
$$
On a donc
$$
W_{n}(z_{i})\times_{\Spa(E)}\Spa(\widehat{E}_{\infty})= \lbrace y\in U_{i,\infty}\colon |f_{\infty}(y)|\geq |\pi(y)^{n}|\neq 0 \rbrace,
$$
et c'est un ouvert rational dans l'espace affinoïde perfectoïde $Y_{[1,q],\infty}$ (c.f. la preuve de la proposition \cite[Prop. II.1.1]{FarguesScholze});par conséquent affinoïde perfectoïde lui-même.
Puisque $\pi$ est fini, $\pi^{-1}(\coprod_{i=1}^{m}[U_{i}-\lbrace z_{i}\rbrace])=\pi^{-1}(\coprod_{i=1}^{m}U_{i})-Z^{\prime \text{ad}}$ reste une union disjointe finie des espaces de Stein et comme on travaille ici sur le lieu où $\pi$ est fini étale, on en déduit que $(\pi^{-1}(\coprod_{i=1}^{m}U_{i})-Z^{\prime \text{ad}})_{\infty}$ est une union disjointe finie des espace de Stein perfectoïde. En passant au basculement, comme dans la preuve du lemme \ref{Lemma verschwinden pre-perfectoid Stein} la suite exacte de Artin-Schreier implique alors de nouveau qu'on a 
$$
H^{i}((\pi^{-1}(\coprod_{i=1}^{m}U_{i})-Z^{\prime \text{ad}})_{\infty},\,\mathbb{F}_{p})=0,
$$
pour $i\geq 2.$
\end{proof}
Le dernier point essentiel dans la preuve de la proposition \ref{Prop Fall l=p} est la construction d'un bon système fondamental des voisinages ouverts qcqs $\underline{\mathbb{Z}_{p}}$-invariants de $Z^{\prime \ad}_{\infty}$ dans $X^{\prime \ad \lozenge}_{\infty}.$
\begin{Lemma_french}\label{Lemma Konstruktion des guten Systems der Umgebungen}
Soient $X^{\prime}\rightarrow X^{\alg}_{E,F}$ un revêtement fini plat, ramifié le long de $Z\hookrightarrow X^{\alg}_{E,F},$ $U$ le complement ouvert de $Z$ dans $X^{\alg}_{E,F},$ $Z^{\prime}$ le pré-image de $Z$ dans $X^{\prime}$ et $V$ le complement ouvert de $Z^{\prime}$ dans $X^{\prime}.$ Si on passe au monde adique, on obtient le Zariski-fermé $Z^{\ad}\hookrightarrow X^{\ad}_{E,F},$ avec complement ouvert $U^{\ad}$ dans $X^{\ad}_{E,F}$ et le Zariski-fermé $Z^{\prime \ad}\hookrightarrow X^{\prime \ad}$ avec complement ouvert $V^{\ad}.$
\\
On écrit la $\mathbb{Z}_{p}$-extension cyclotomique $E_{\infty}$ de $E$ comme union croissante des sous-extension galoisiennes $E_{n}/E,$ tel que $G_{n}:=\Gal(E_{n}/E)\simeq \mathbb{Z}/p^{n}\mathbb{Z}.$ Maintenant, on dessine le diagramme suivant (pour aider à la construction):
$$
\xymatrix{
(Z^{\prime \ad})_{\infty} \ar[r] \ar[d] & (X^{\prime \ad \lozenge})_{\infty} \ar[d]^{f_{n}} & (V^{\ad \lozenge})_{\infty} \ar[l] \ar[d] \\
(Z^{\prime \ad})_{n} \ar[r] \ar[d] &  (X^{\prime \ad \lozenge})_{n} \ar[d]^{g_{n}} & (V^{\ad \lozenge})_{n} \ar[l] \ar[d] \\
(Z^{\prime \ad}) \ar[r] & X^{\prime \ad \lozenge} & V^{\ad \lozenge} \ar[l],
}
$$
où $(.)_{n}$ (resp. $(.)_{\infty}$) signifie le changement de base de $\Spd(E)$ vers $\Spd(E_{n})$ (resp. $\Spd(\widehat{E}_{\infty})$), le morphisme $f_{n}$ est un $\ker(\mathbb{Z}_{p}\rightarrow G_{n})=H_{n}$-torseur et $g_{n}$ est un $G_{n}$-torseur.
\\
On appelle un ouvert $W^{\prime (n)}\subset X^{\prime \ad}_{n}$ 'suffisamment petit', si
$$
gW^{\prime (n)}\cap hW^{\prime (n)}=\emptyset,
$$
pour $g\neq h\in G_{n}.$
\begin{enumerate}
\item[(a):] Soit
$$
\mathcal{W}^{\prime (n)}_{i}=\lbrace \coprod_{g\in G_{n}} g\cdot W^{\prime (n)}\colon W^{\prime (n)} \text{voisinage qcqs ouvert 'suffisament petit' de } \tilde{z}_{i}^{\prime (n)} \rbrace,
$$
pour $i=1,...,m.$ Alors le système $\coprod_{i=1}^{m} W^{\prime (n)}_{i},$ où $W^{\prime (n)}_{i}\in \mathcal{W}^{\prime (n)}_{i}$ est un système fondamental des voisinages ouverts qcqs $G_{n}$-invariants de $Z^{\prime \ad}_{n}$ dans $X^{\prime \ad}_{n}.$
\item[(b):] Le système $(f^{-1}_{n}(\coprod_{i=1}^{m} W^{\prime (n)}_{i}))_{n},$ où $W^{\prime (n)}_{i}\in \mathcal{W}^{(n)}_{i},$ $n\in \mathbb{N},$ est un système fondamental des voisinages ouverts qcqs $\underline{\mathbb{Z}_{p}}$-invariants de $Z^{\prime \ad}_{\infty}$ dans $X^{\prime \ad \lozenge}_{\infty}.$
\end{enumerate}
\end{Lemma_french}
\begin{proof}
On peut se réduire au cas où $Z^{\ad}=\lbrace \infty \rbrace$ est un seul point à l'infini (c'est juste pour relaxer un peu les notations). Soit $Z^{\prime \text{ad}}=\lbrace z_{1}^{\prime},...,z_{m}^{\prime} \rbrace.$ On observe d'abord que tous les points $z_{i}^{\prime}\in Z^{\prime \text{ad}}$ sont fermés et maximaux: ils sont fermés parce que $Z^{\prime \text{ad}}$ est fermé, donc spectral, et parce que c'est un ensemble fini - c.f. \cite[Tag 0902 et Tag 0905]{stacks}; ils sont maximaux parce qu'il n'existe pas de généralisations communes pour deux points $z_{i}^{\prime}\neq z_{j}^{\prime},$ $z_{i}^{\prime},z_{j}^{\prime}\in Z^{\prime \text{ad}},$ car les valuations sur le corps résiduels $\kappa(z_{i}),$ $z_{i}\in Z^{\text{ad}}$ s'étendent d'une façon unique. Comme $$g_{n}\colon (X^{\prime \text{ad} \lozenge})_{n} \rightarrow X^{\prime \text{ad} \lozenge} $$ est un $G_{n}$-torseur, on peut choisir des pré-images compatibles (en $n\in \mathbb{N}$) $\tilde{z}_{i}^{\prime (n)}\in (X^{\prime \text{ad} \lozenge})_{n},$ $i=1,...,m,$ des points $z_{i}^{\prime}\in X^{\prime \text{ad}},$ et écrire $(Z^{\prime \text{ad}})_{n}$ comme réunion disjointe des orbites $G_{n}\cdot \tilde{z}_{i}^{\prime (n)}.$ Puisque $g_{n}$ est fini étale tous les points dans $(Z^{\prime \text{ad}})_{n}$ sont encore (fermés et) maximaux. 
\\
On démontre le point (a).
L'énoncé clef ici est le suivant: on peut toujours raffiner un voisinage de $\tilde{z}_{i}^{\prime (n)}$ par un voisinage qui est 'suffisamment petit'. Pour expliquer ceci on considère la localisation de l'espace adique analytique $X^{\prime \text{ad}}_{n}$ en un point $\tilde{z}_{i}^{\prime (n)}:$ c'est l'ensemble des généralisations du point $\tilde{z}_{i}^{\prime (n)}.$ Comme les points $\tilde{z}_{i}^{\prime (n)}$ sont maximaux, on a
$$
\lbrace \tilde{z}_{i}^{\prime (n)} \rbrace = (X^{\prime \text{ad}}_{n})_{\tilde{z}_{i}^{\prime (n)}}=\bigcap_{\tilde{z}_{i}^{\prime (n)}\in U}U,
$$
où l'intersection porte sur tous les voisinages ouverts qcqs de $\tilde{z}_{i}^{\prime (n)}.$ Il suffit ainsi de démontrer 
$$
g((X^{\prime \text{ad}}_{n})_{\tilde{z}_{i}^{\prime (n)}})\cap (X^{\prime \text{ad}}_{n})_{\tilde{z}_{i}^{\prime (n)}}=\emptyset,
$$
pour $g\neq e\in G_{n};$ comme $G_{n}$ agit sans points fixes sur l'espace topologique $|X^{\prime \text{ad}}_{n}|,$ on a $g(\tilde{z}_{i}^{\prime (n)})\neq \tilde{z}_{i}^{\prime (n)},$ pour $g\neq e\in G_{n}.$ Cela implique l'égalité cherchée.
\\
Soit maintenant $\mathcal{U}_{n}(i)$ un voisinage ouvert qcqs $G_{n}$-invariant de $G_{n}\cdot \tilde{z}_{i}^{\prime (n)};$ d'après l'énoncé qu'on vient d'expliquer, on peut trouver un voisinage 'suffisamment petit' $W^{\prime (n)}_{i}$ de $\tilde{z}_{i}^{\prime (n)}$ inclus dans $\mathcal{U}_{n}.(i)$ Grâce à l'invariance par $G_{n}$ du voisinage $\mathcal{U}_{n}(i),$ on a $\coprod_{g\in G_{n}}g\cdot W^{\prime (n)}_{i}\subseteq \mathcal{U}_{n}(i).$ 
\\
Soit maintenant $\mathcal{U}_{n}$ un voisinage ouvert qcqs $G_{n}$-invariant de $(Z^{\prime \ad})_{n}.$ C'est en particulier un voisinage ouvert qcqs $G_{n}$-invariant de $G_{n}\cdot \tilde{z}_{i}^{\prime (n)};$ on peut par conséquent trouver des voisinages suffisamment petits $W^{\prime (n)}_{i}\subset \mathcal{U}_{n}$ de $\tilde{z}_{i}^{\prime (n)},$ $i=1,...m.$ On observe qu'on peut supposer que
$$
gW^{\prime (n)}_{i}\cap hW^{\prime (n)}_{j}=\emptyset,
$$
pour $g,h\in G_{n},$ $g\neq h,$ $i\neq j\in \lbrace 1,...,m \rbrace.$ \footnote{C'est le même argument qu'on vient de donner pour trouver un voisinage suffisamment petit d'un point $\tilde{z}_{i}^{\prime (n)}.$ De fait, pour $i\neq j,$ on a $g\cdot \tilde{z}_{i}^{\prime (n)}\neq \tilde{z}_{j}^{\prime (n)},$ parce que sinon $f_{n}(g\cdot \tilde{z}_{i}^{\prime (n)})=z_{i}=z_{j}=f_{n}(\tilde{z}_{j}^{\prime (n)}).$ Alors on a
$$
g((X^{\prime \text{ad}}_{n})_{\tilde{z}_{i}^{\prime (n)}})\cap (X^{\prime \text{ad}}_{n})_{\tilde{z}_{j}^{\prime (n)}}=\emptyset.
$$} On peut en déduire que
$$
\coprod_{i=1}^{m} \coprod_{g\in G_{n}}g\cdot W^{\prime (n)}_{i}\subseteq \mathcal{U}_{n}.
$$
Ceci implique le point (a).
\\
On affirme maintenant que le système
$$
\lbrace f_{n}^{-1}(\mathcal{W}^{\prime (n)}_{i}) \rbrace_{i=1,...,m,n\in \mathbb{N}}
$$
forme un système fondamental de voisinages $\underline{\mathbb{Z}_{p}}$-invariants de $(Z^{\prime \text{ad}})_{\infty}$ dans $(X^{\prime \text{ad} \lozenge})_{\infty}.$ Il faut expliquer d'abord pourquoi c'est bien un voisinage $\underline{\mathbb{Z}_{p}}$-invariant: soit $\mathcal{R}_{n}$ un système de représentants de $G_{n}$ dans $\mathbb{Z}_{p}.$ On observe donc qu'on a pour tout $n\in \mathbb{N}$ et chaque voisinage ouvert qcqs $W^{\prime}_{n}$ de $\tilde{z}_{j}^{\prime (n)}$
$$
f_{n}^{-1}(\coprod_{g\in G_{n}}g\cdot W^{\prime}_{n})=\coprod_{\widetilde{g}_{k,n}\in \mathcal{R}_{n}\subset \mathbb{Z}_{p}} \widetilde{g}_{k,n}\cdot f_{n}^{-1}(W^{\prime}_{n}).
$$
En effet, ceci suit du fait que $f_{n}\colon (X^{\prime \text{ad} \lozenge})_{\infty}\rightarrow (X^{\prime \text{ad}\lozenge})_{n}$ est un $H_{n}$-torseur.\footnote{En effet, soit $V^{\prime}_{n}$ un ouvert qcqs dans $X^{\prime \text{ad}}_{n},$ $g\in G_{n}$ et $\tilde{g}\in \mathbb{Z}_{p}$ une pre-image de $g_{n}.$ Alors, on affirme qu'on a
$$
f_{n}^{-1}(g\cdot V^{\prime}_{n})=\tilde{g}\cdot f_{n}^{-1}(V^{\prime}_{n}).
$$
Soit $\tilde{w}\in f_{n}^{-1}(V^{\prime}_{n}),$ alors on a $f_{n}(\tilde{g}\cdot \tilde{w})=g\cdot f_{n}(\tilde{w})\in g\cdot V^{\prime}_{n}.$ Pour l'autre inclusion, soit $\tilde{w}\in f_{n}^{-1}(g\cdot V^{\prime}_{n}).$ On prend $w^{\prime}\in V^{\prime}_{n},$ tel que $f_{n}(\tilde{w})=g\cdot w^{\prime}.$ Soit $\tilde{w}^{\prime}\in f^{-1}(V^{\prime}_{n})$ une pre-image de $w^{\prime}$ sous $f_{n},$ alors on a $f_{n}(\tilde{w})=f_{n}(\tilde{g}\cdot \tilde{w}^{\prime}).$ On trouve un $h\in H_{n},$ tel que $\tilde{w}=\tilde{g}\cdot h\cdot \tilde{w}^{\prime}.$ Comme $f_{n}(h\cdot \tilde{w}^{\prime})=f_{n}(\tilde{w}^{\prime})\in V^{\prime}_{n},$ cela implique que $\tilde{w}\in \tilde{g}\cdot f_{n}^{-1}(V^{\prime}_{n}).$
} 
Il en résulte qu'il s'agit bien de voisinages $\underline{\mathbb{Z}_{p}}$-invariants. 
\\
Il reste à démontrer que c'est bien un système fondamental: soit $V^{\prime}_{\infty}$ un voisinage ouvert qcqs $\underline{\mathbb{Z}_{p}}$-invariant de $(Z^{\prime \text{ad}})_{\infty}.$ Le voisinage $V^{\prime}_{\infty}$ descend vers un voisinage $G_{n}$-invariant qcqs ouvert $V^{\prime}_{n}$ de $(Z^{\prime \text{ad}})_{n},$ pour $n\in \mathbb{N}$ suffisamment grand. On peut ainsi trouver des voisinages 'suffisamment petits' $W_{i}^{\prime (n)}$ de $ \tilde{z}_{i}^{\prime (n)},$ tels que
$$
\coprod_{i=1}^{m}\coprod_{g\in G_{n}}g\cdot W_{i}^{\prime (n)}\subseteq V^{\prime}_{n}.
$$
Il en résulte que
$$
f_{n}^{-1}(\coprod_{i=1}^{m}\coprod_{g\in G_{n}}g\cdot W_{i}^{\prime (n)})\subseteq f_{n}^{-1}(V^{\prime}_{n})=V^{\prime}_{\infty},
$$
ce qui implique l'énoncé.
\end{proof}
\subsubsection{Preuve de la proposition \ref{Prop Fall l=p}}
On est finalement prêt à démontrer la proposition \ref{Prop Fall l=p}. Parce que ça fait déjà un petit moment qu'on a énoncé cette proposition, je vais rappeller ce qu'on veut démontrer: Soit $\mathcal{F}$ un $\mathbb{F}_{p}$-module constructible sur le site étale de $X^{\alg}_{E,F}.$ Alors on veut expliquer pourquoi l'hypothèse (Stein) implique
$$
H^{i}(X^{\ad}_{E,F},\mathcal{F}^{\ad})=0,
$$
pour $i\geq 3.$
\begin{proof}
On fixe un $\mathbb{F}_{p}$-module constructible $\mathcal{F}$ sur le site étale de $X^{\alg}_{E,F}.$ Comme dans la preuve de la proposition \ref{l-kohomologische Dimension algebraische Kurve}, on applique d'abord la méthode de la trace: soit $U$ un ouvert dans $X^{\alg}_{E,F},$ sur lequel $\mathcal{F}$ est un système local, $\dot{\pi}\colon V\rightarrow U$ le revêtement fini étale de degré premier à $p$ construit en utilisant la méthode de la trace et $\pi\colon X^{\prime}\rightarrow X^{\text{alg}}_{E,F}$ le revêtement ramifié construit en prenant la normalisation de $V$ dans $X^{\text{alg}}_{E,F};$ je rappelle que $\pi$ est un morphisme fini plat. Cette situation est de nouveau décrite par le diagramme suivant:
$$
\xymatrix{
V \ar[r]^{j^{\prime}} \ar[d]^{\dot{\pi}} & X^{\prime} \ar[d]^{\pi} &  \ar[l]^{i^{\prime}}Z^{\prime} \ar[d] \\
U \ar[r]^{j} & X^{\alg}_{E,F} & \ar[l]^{i} Z .
}
$$
On considère la composition suivante
$$
\xymatrix{
j_{!}\mathcal{L} \ar[r] & j_{!}\dot{\pi}_{*}\dot{\pi}^{*}(\mathcal{L}) \ar[r]^-{\text{tr}} & j_{!}\mathcal{L}.
}
$$
Par construction, elle s'identifie à la multiplication par $\text{deg}(\dot{\pi}).$ Comme $\dot{\pi}$ est fini étale, on a
$$
j_{!}\dot{\pi}_{*}\dot{\pi}^{*}(\mathcal{L})\simeq \pi_{*}j^{\prime}_{!}\dot{\pi}^{*}(\mathcal{L}).
$$
Appliquant à la composition ci-dessus le morphisme $u^{*}\colon \widetilde{(X^{\text{alg}}_{E,F})}_{\text{ét}}\rightarrow \widetilde{(X^{\text{ad}}_{E,F})}_{\text{ét}},$ on trouve 
$$
\xymatrix{
j^{\text{ad}}_{!}\mathcal{L}^{\text{ad}} \ar[r] & \pi^{\text{ad}}_{*}j^{\prime \text{ad}}_{!}(\dot{\pi}^{*}(\mathcal{L}))^{\text{ad}} \ar[r]^-{\text{tr}^{\text{ad}}} & j^{\text{ad}}_{!}\mathcal{L}^{\text{ad}}.
}
$$ 
La composition s'identifie toujours à la multiplication $\cdot \text{deg}(\pi^{\text{ad}}).$ On emploie ici le fait que $\pi_{*}$ et $j^{\prime}_{!}$ commutent avec l'adification (le cas de $j^{\prime}_{!}$ est évident et pour le cas du foncteur $\pi_{*}$ on peut utiliser \cite[Thm. 3.7.2]{HuberBuch}).
\\
Afin de démontrer $H^{i}(X^{\text{ad}}_{E,F},\,\mathcal{F}^{\text{ad}})=0,$ pour $i\geq 3,$ il suffit de prouver
\begin{equation}\label{Gleichung Reduktion auf ausdehnen durch null von etwas konstantem verzweigte ueberdeckung}
H^{i}(X^{\prime \text{ad}},\, j_{!}^{\prime \text{ad}}\mathbb{F}_{p})=0,
\end{equation}
pour $i\geq 3.$ 
\\
De fait, comme tous les corps résiduels des points dans $Z^{\text{ad}}$ sont algébriquement clos, il suffit de démontrer $H^{i}(X^{\text{ad}}_{E,F},\,j^{\text{ad}}_{!}(\mathcal{L}^{\text{ad}}))=0,$ $i\geq 3.$ Étant donné que le degré de $\pi^{\text{ad}}$ est premier à $p,$ on voit qu'il suffit à démontrer que $H^{i}(X^{\prime \text{ad}},\,j^{\prime \text{ad}}_{!}(\dot{\pi}^{*}(\mathcal{L}))^{\text{ad}})=0,$ $i\geq 3.$ Comme le foncteur $u^{*}(.)$ est exact, $(\dot{\pi}^{*}(\mathcal{L}))^{\text{ad}}$ est toujours une extension successive du faisceau $\underline{\mathbb{F}_{p}}_{V^{\text{ad}}}$ et on est finalement réduit à démontrer l'égalité (\ref{Gleichung Reduktion auf ausdehnen durch null von etwas konstantem verzweigte ueberdeckung}) ci-dessus.
\\
On considère le triangle suivant:
$$
\xymatrix{
j^{\prime \text{ad}}_{!}\mathbb{F}_{p} \ar[r] & Rj^{\prime \text{ad}}_{*}\mathbb{F}_{p} \ar[r] & i^{\prime \text{ad}}_{*} i^{\prime \text{ad} *}Rj^{\prime \text{ad}}_{*} \mathbb{F}_{p} \ar[r]^-{+1} &.
}
$$
Par corollaire \ref{Korollar Hyp Stein impliziert Verschwinden offenes}, on a
$$
H^{i}(V^{\ad},\mathbb{F}_{p})=0,
$$
pour $i\geq 3.$
 On est réduit à démontrer
$$
H^{i}(Z^{\prime \text{ad}},\,i^{\prime \text{ad} *}Rj^{\prime \text{ad}}_{*}\mathbb{F}_{p} )=0,
$$
pour $i\geq 2.$ Maintenant on affirme la formule suivante pour ces groupes de cohomologie:
\begin{Lemma_french}\label{Lemma Formel Kohomologie von Z ad}
$$
H^{i}(Z^{\prime \ad},\,i^{\prime \ad *}Rj^{\prime \ad}_{*}\mathbb{F}_{p} )= \underset{W^{\prime}} \colim H^{i}(W^{\prime}-Z^{\prime \ad},\,\mathbb{F}_{p}),
$$
où la colimite porte sur tous les ouverts qcqs $W^{\prime}\subseteq X^{\prime \ad},$ tels que $Z^{\prime \ad}\subset W^{\prime}.$
\end{Lemma_french}
 On doit expliquer cette formule, car a priori on a besoin d'utiliser tous les voisinages étales de $Z^{\prime \text{ad}}$ dans cette colimite. 
\begin{proof}{(du lemme \ref{Lemma Formel Kohomologie von Z ad}.)}
Les ingrédients sont les deux faits: premièrement, tous les corps résiduels des points $z_{i}^{\prime}\in Z^{\prime \text{ad}}$ sont algébriquement clos. Deuxièmement, dans le monde des espaces adiques on peut raffiner un voisinage étale donné par un voisinage étale, que l'on peut écrire comme composition d'une immersion ouverte et d'un morphisme fini étale (et on peut supposer que la source et la cible sont connexes c.f. \cite[Lemma 2.2.8]{HuberBuch}). 
\\
Soit $Z^{\prime \text{ad}}=\lbrace z_{1}^{\prime},...,z_{m}^{\prime}\rbrace;$ on a la formule suivante:
$$
H^{i}(Z^{\prime \text{ad}},\, i^{\prime \text{ad} *}Rj^{\prime \text{ad}}_{*}\mathbb{F}_{p} )=\bigoplus_{i=1}^{m}H^{i}(\lbrace z_{i} \rbrace,\, i_{ z_{i}^{\prime *} }Rj^{\prime \text{ad}}_{*}\mathbb{F}_{p} ),
$$
où $i_{ z_{i}^{\prime}}\colon \lbrace  z_{i}^{\prime} \rbrace \hookrightarrow X^{\prime \text{ad}}$ est l'inclusion du point $\lbrace z_{i}^{\prime} \rbrace.$ 
\\
Ensuite on peut analyser les groupes de cohomologie $H^{i}(\lbrace z_{i} \rbrace,\, i_{ z_{i}^{\prime}}^{*}Rj^{\prime \text{ad}}_{*}\mathbb{F}_{p} ).$ Ici on trouve la suite des identifications suivantes:
\begin{align*}
H^{i}(\lbrace z_{i}^{\prime} \rbrace,\,i_{ z_{i}^{\prime}}^{*}Rj^{\prime \text{ad}}_{*}\mathbb{F}_{p} ) & = \underset{f\colon(W^{\prime},v)\rightarrow (\Spa(B^{\prime}),z^{\prime}_{i}) \text{ étale et }W^{\prime}\text{ affd.}}  \colim  H^{i}(W^{\prime}-f^{-1}(v),\,\mathbb{F}_{p}) \\
 & = \underset{f\colon(W^{\prime},v)\rightarrow (X^{\prime \text{ad}},z^{\prime}_{i}) \text{ immersion ouverte qcqs, }W^{\prime}\text{ affd. connexe}} \colim H^{i}(W^{\prime}-\lbrace z_{i}^{\prime} \rbrace,\,\mathbb{F}_{p}),
d\end{align*}
où $\Spa(B^{\prime},B^{\prime +})$ est un voisinage affinoïde, connexe du point $z_{i}^{\prime}\in Z^{\prime \text{ad}}.$ De fait, dans un premier temps on observe que les voisinages étales $f\colon (W^{\prime},v)\rightarrow (\Spa(B^{\prime}),z_{i}^{\prime})$ avec $W^{\prime}$ affinoïde tels que $f^{-1}(z_{i}^{\prime})=\lbrace v \rbrace$ sont cofinaux. Afin de le voir, il faut se convaincre que chaque point dans la fibre $f^{-1}(z_{i}^{\prime})$ est fermé: $f$ est quasi-compact et localement quasi-fini, donc quasi-fini et $f^{-1}(z_{i}^{\prime})$ est un ensemble fini. Chaque point dans $f^{-1}(z_{i}^{\prime})$ est ainsi (ouvert et) fermé; et comme $z_{i}^{\prime}$ est fermé il en résulte que chaque point de $f^{-1}(z_{i}^{\prime})$ est fermé dans $W^{\prime}.$ Après, on peut trouver un système cofinal de voisinages étales $f\colon (W^{\prime},v)\rightarrow (\Spa(B^{\prime}),z_{i}^{\prime}),$ tel que $W^{\prime}$ et $\Spa(B^{\prime})$ sont affinoïdes connexes, $f^{-1}(z_{i}^{\prime})=\lbrace v \rbrace$ et $f$ est la composition d'une immersion ouverte et un morphisme fini étale. Puisque les corps résiduels $\kappa(z_{i}^{\prime})$ sont algébriquement clos, la partie finie étale est triviale et cela explique la deuxième égalité ci-dessus.
\\
On en déduit que
$$
H^{i}(Z^{\prime \text{ad}},\,i^{\prime \text{ad} *}Rj^{\prime \text{ad}}_{*}\mathbb{F}_{p})=\bigoplus_{i=1}^{m} H^{i}(i_{ z_{i}^{\prime}}^{*}Rj^{\prime \text{ad}}_{*}\mathbb{F}_{p} ) = \bigoplus_{i=1}^{m} \underset{z_{i}^{\prime}\in W^{\prime}_{i} \subset X^{\prime \text{ad}} \text{ affd}} \colim H^{i}(W^{\prime}_{i}-\lbrace z_{i}^{\prime} \rbrace,\,\mathbb{F}_{p}).
$$
Vu que les points $z_{i}^{\prime}$ sont maximaux, on peut trouver des voisinages ouverts $W_{i}^{\prime}$ des points $z_{i}^{\prime},$ tels que $W_{i}^{\prime}\cap W_{j}^{\prime}=\emptyset,$ c.f. \cite[Tag 0904]{stacks}. Il en résulte qu'on peut raffiner un voisinage ouvert qcqs $W^{\prime}$ de $Z^{\prime \text{ad}}$ par un voisinage de la forme $\coprod_{i=1}^{m}W_{i}^{\prime}.$ Finalement, on en déduit que
$$
\underset{Z^{\prime \text{ad}}\subset W^{\prime} \subset X^{\prime \text{ad}} \text{ affd}} \colim H^{i}(W^{\prime}-Z^{\prime \text{ad}},\,\mathbb{F}_{p})\simeq \bigoplus_{i=1}^{m} H^{i}(i_{ z_{i}^{\prime}}^{*}Rj^{\prime \text{ad}}_{*}\mathbb{F}_{p} ),
$$
comme cherché.
\end{proof}
On revient à la démonstration de la proposition \ref{Prop Fall l=p}.
D'après le lemme qu'on vient de démontrer, il  faut maintenant analyser la colimite 
$$\underset{W^{\prime}} \colim H^{i}(W^{\prime}-Z^{\prime \text{ad}},\,\mathbb{F}_{p}).$$
On effectue cette analyse par passage au revêtement perfectoide:
Soit $E_{\infty}$ encore la $\mathbb{Z}_{p}$-extension de $E.$ On peut l'écrire comme réunion croissante d'extensions galoisiennes $E_{n}$ de $E$ (telles que $G_{n}:=\Gal(E_{n}/E)\simeq \mathbb{Z}/p^{n}\mathbb{Z}$ et $\mathbb{Z}_{p}= \lim_{n} \Gal(E_{n}/E)$). Le morphisme $\Spd(\widehat{E}_{\infty})\rightarrow \Spd(E)$ est ainsi un $\underline{\mathbb{Z}_{p}}$-torseur pro-étale; par changement de base on a donc le $\underline{\mathbb{Z}_{p}}$-torseur pro-étale
$$
X^{\prime \text{ad} \lozenge}\times_{\Spd(E)}\Spd(\widehat{E}_{\infty})\rightarrow X^{\prime \text{ad} \lozenge}.
$$
Par la suite on écrit simplement $X^{\prime \text{ad} \lozenge}\times_{\Spd(E)}\Spd(\widehat{E}_{\infty}):=(X^{\prime \text{ad} \lozenge})_{\infty}.$

On considère maintenant le système filtrant des voisinages ouverts qcqs $\underline{\mathbb{Z}_{p}}$-invariants $\widetilde{W}^{\prime}\subset (X^{\prime \text{ad} \lozenge})_{\infty}$ de $(Z^{\prime \text{ad}})_{\infty}.$ On trouve ainsi l'égalité suivante:
\begin{equation}\label{Gleichung Ausdruck Kolimes via Diamanten}
\underset{W^{\prime}}\colim R\Gamma(W^{\prime}-Z^{\prime \text{ad}},\, \mathbb{F}_{p})=R\Gamma(\mathbb{Z}_{p},\underset{\widetilde{W^{\prime}}}\colim R\Gamma(\widetilde{W^{\prime}}-(Z^{\prime \text{ad}})_{\infty},\,\mathbb{F}_{p})).
\end{equation}
On a fait recours aux faits suivants: l'exactitude de la colimite, le fait que la cohomologie étale d'un espace adique analytique sur $\mathbb{Z}_{p}$ ne change pas si on passe aux diamants, et l'existence d'une identification entre les ouverts qcqs $W^{\prime}\subset X^{\prime \text{ad}}$ tels que $Z^{\prime \text{ad}}\subset W$ et les ensembles ouverts qcqs $\underline{\mathbb{Z}_{p}}$-invariants $\widetilde{W}^{\prime}\subset (X^{\prime \text{ad} \lozenge})_{\infty}$ tels que $(Z^{\prime \text{ad}})_{\infty} \subset \widetilde{W}.$ D'après le lemme \ref{Lemma Verschwinden lokal Stein}, on sait que le complexe $$\colim_{\widetilde{W}^{\prime}} R\Gamma(\widetilde{W}^{\prime}-(Z^{\prime \text{ad}})_{\infty},\,\mathbb{F}_{p})$$ n'a pas de cohomologie en degrés supérieur ou égale à $2.$
\\
Le fait que $\cd_{p}(\mathbb{Z}_{p})=1$ représente le problème principal: a priori ce résultat d'annulation ne suffit qu'à démontrer l'annulation de $H^{i}(i^{\prime \text{ad} *}Rj^{\prime \text{ad}}_{*}\mathbb{F}_{p})$ pour $i\geq 3.$ Cependant l'observation suivante sauve la situation: il y a un isomorphisme entre $\mathbb{Z}_{p}$-représentations\footnote{Les $\mathbb{F}_{p}$-modules discrets $\underset{\widetilde{W}^{\prime}}\colim H^{j}(\widetilde{W}^{\prime}-(Z^{\prime \text{ad}})_{\infty},\,\mathbb{F}_{p})$ sont bien sûr munis d'une action de $\mathbb{Z}_{p}$ - par l'action de $\underline{\mathbb{Z}_{p}}$ sur $\widetilde{W}^{\prime}-(Z^{\prime \text{ad}})_{\infty}$ et la fonctorialité de la cohomologie.}
\begin{equation}\label{Gleichung key claim fuer darstellungen}
\underset{\widetilde{W}^{\prime}} \colim H^{j}(\widetilde{W}^{\prime}-(Z^{\prime \text{ad}})_{\infty},\,\mathbb{F}_{p})=\underset{n\in \mathbb{N}} \colim \text{Ind}_{\lbrace e \rbrace}^{G_{n}}(M_{n}^{(j)}),
\end{equation}
où $M_{n}^{(j)}$ sont des $\mathbb{F}_{p}$-modules discrets, munis de l'action triviale de $G_{n}:$ ils dépendent de $j$ et forment un système inductif, i.e. on a une application $M_{n}^{(j)}\rightarrow M_{n+1}^{(j)}.$ Le lemme \ref{Lemma zum Verschwinden der Gruppenkohomologie} implique que la $\mathbb{Z}_{p}$-cohomologie d'une représentation du groupe $\mathbb{Z}_{p}$ de la forme 
$$
\underset{n\in \mathbb{N}} \colim \text{Ind}_{\lbrace e \rbrace}^{G_{n}}(M_{n}^{(j)})
$$
est triviale (c.f. également la discussion après ce lemme pour voir comment on a muni $\colim_{n} \text{Ind}_{\lbrace e \rbrace}^{G_{n}}(M_{n})$ d'une action du groupe $\mathbb{Z}_{p}$). L'égalité (\ref{Gleichung Ausdruck Kolimes via Diamanten}) implique que, en utilisant la suite spectrale de Hochschild-Serre,
$$
H^{i}(Z^{\prime \text{ad}},\,i^{\prime \text{ad} *}Rj^{\prime \text{ad}}_{*}\mathbb{F}_{p})=H^{0}(\mathbb{Z}_{p},\underset{\widetilde{W}^{\prime}}\colim H^{i}(\widetilde{W}^{\prime}-(Z^{\prime \text{ad}})_{\infty}),\,\mathbb{F}_{p}).
$$
Grâce au lemme \ref{Lemma Verschwinden lokal Stein} on pourra donc conclure. 
\\
L'énoncé clef pour finir la preuve est ainsi l'égalité (\ref{Gleichung key claim fuer darstellungen}).
\\
Mais maintenant on peut utiliser le système fondamental fourni par le lemme \ref{Lemma Konstruktion des guten Systems der Umgebungen} pour calculer la colimite:
\begin{align*}
\underset{\widetilde{W}^{\prime}} \colim H^{j}(\widetilde{W}^{\prime}-(Z^{\prime \text{ad}})_{\infty},\,\mathbb{F}_{p}) & = \underset{n\in \mathbb{N}} \colim \bigoplus_{g\in G_{n}} M_{n}^{(j)},
\end{align*}
où
$$
M_{n}^{(j)}=\underset{W^{\prime (n)}_{i} \text{ voisinage 'suffisament petit' de } \tilde{z}_{i}^{\prime (n)}} \colim H^{j}(f_{n}^{-1}(\coprod_{i=1}^{m} \cdot W_{i}^{\prime (n)}-(\lbrace \tilde{z}_{i}^{\prime (n)} \rbrace_{i=1,...,m})),\,\mathbb{F}_{p}).
$$
\footnote{Peut-être qu'il n'est pas vrai que $H^{j}(f_{n}^{-1}(\coprod_{i=1}^{m} \cdot W_{i}^{\prime (n)}-(\lbrace \tilde{z}_{i}^{\prime (n)} \rbrace_{i=1,...,m}),\,\mathbb{F}_{p})=0,$ pour $j\geq 2$ - mais cela n'est pas grave, parce qu'on a seulement besoin de cette expression de la colimite pour tuer la $\mathbb{Z}_{p}$-cohomologie...}
Sur les $M_{n}^{(j)}$ le groupe $\mathbb{Z}_{p}$ agit via le quotient $G_{n}$ et cette dernière action est triviale; de plus $\lbrace M_{n}^{(j)} \rbrace_{n\in \mathbb{N}}$ forment bien un système direct. En effet, on observe d'abord l'égalité suivante:
$$
M_{n}^{(j)}=\underset{W^{\prime}\text{ voisinage de }Z^{\prime \text{ad}}_{n}}\colim H^{i}(f_{n}^{-1}(W^{\prime}-Z^{\prime \text{ad}}_{n}),\mathbb{F}_{p}).
$$
De fait, si $Z^{\prime \text{ad}}_{n}\subset W^{\prime}$ est un tel voisinage, on peut trouver un voisinage suffisamment petit $W_{i}^{\prime (n)}\subset W^{\prime}$ pour chaque point $\tilde{z}^{(n)}_{i}\in Z^{\prime \text{ad}}_{n}.$ De plus, parce que les points $\tilde{z}^{(n)}_{i}$ sont tous maximaux, on peut supposer que $W_{i}^{\prime (n)}\cap W_{j}^{\prime (n)}=\emptyset;$ l'égalité en résulte. Après on considère le morphisme de transition dans la tour
$$
f_{n+1,n}\colon X^{\prime \text{ad}}_{n+1}\rightarrow X^{\prime \text{ad}}_{n}.
$$
Si $W^{\prime}$ est un voisinage qcqs de $Z^{\prime \text{ad}}_{n},$ $f_{n+1,n}^{-1}(W^{\prime})$ est un voisinage qcqs de $Z^{\prime \text{ad}}_{n+1}.$ On a par conséquent une application
$$
H^{i}(f_{n}^{-1}(W^{\prime}-Z^{\prime \text{ad}}_{n}),\mathbb{F}_{p})=H^{i}(f_{n+1}^{-1}(f_{n+1,n}^{-1}(W^{\prime})-Z^{\prime \text{ad}}_{n+1}),\mathbb{F}_{p})\rightarrow M_{n+1}^{(j)}.
$$
Elles sont compatibles si on raffine les voisinages et on a ainsi une application bien définie
$$
M_{n}^{(j)}\rightarrow M_{n+1}^{(j)}.
$$
Cela permet de finalement terminer la preuve.
\end{proof}
\begin{Remark_french}\label{Quatschbemerkung zu l=p und Grad}
On remarque ici que dans la preuve ci-dessus on a seulement utilisé le fait que le degré de $$\pi\colon X^{\prime}\rightarrow X^{\text{alg}}_{E,F}$$ est premier à $p$ pour pouvoir démontrer que $H^{i}(X^{\prime \text{ad}},\mathbb{F}_{p})=0$ implique que $H^{i}(X^{\text{ad}}_{E,F},\mathcal{F}^{\text{ad}})=0,$ où $i\geq 3.$ Après on n'a pas utilisé l'hypothèse sur le degré; ce qui implique que $H^{i}(X^{\prime \text{ad}},\mathbb{F}_{p})=0,$ $i\geq 3,$ pour tout revêtement ramifié $X^{\prime}$ de $X^{\text{alg}}_{E,F}$.
\end{Remark_french}
\subsection{Et que se passe-t-il pour des faisceaux non-Zariski constructibles?}\label{subsection Quatsch zu nicht-Zariski konstru}
Il n'est pas difficile de démontrer que l'on a
$$
\cd_{p}(X^{\text{ad}}_{E,F})\leq 3.
$$\footnote{Voici l'argument: on utilise la présentation $(X^{\text{ad}}_{E,F})^{\lozenge}=(\mathbb{B}^{1,\circ,*,\text{perf}}_{F}/\varphi_{F}^{\mathbb{Z}})/\underline{\mathbb{Z}_{p}};$ comme $\cd_{p}(\mathbb{Z}_{p})=1,$ la suite spectrale de Hochschild-Serre implique qu'il suffit de démontrer que $\cd_{p}(\mathbb{B}^{1,\circ,*,\text{perf}}_{F}/\varphi_{F}^{\mathbb{Z}})\leq 2.$ Cela résulte de la formule \cite[Prop. 21.11]{ScholzeDiamonds}, qui est aussi valide pour $\ell=p,$ des faits que $\text{dim}_{\text{Krull}}|\mathbb{B}^{1,\circ,*,\text{perf}}_{F}/\varphi_{F}^{\mathbb{Z}}|=1$ et que $\cd_{p}(M)\leq 1,$ où $M$ est un corps de caractéristique $p.$}
Dans mon approche pour contrôler la $p$-dimension cohomologique de la courbe algébrique, on n'a travaillé qu'avec des faisceaux sur la courbe adique $X^{\text{ad}}_{E,F}$ qui sont 'algébriques'. On peut ainsi se poser la question suivante:
\begin{Question}
Peut on trouver un $\mathbb{F}_{p}$-module constructible $\mathcal{F}$ (dans le sens de Huber \cite[Def. 2.7.2.]{HuberBuch}) sur l'espace adique $X^{\text{ad}}_{E,F},$ tel que
$$
H^{3}(X^{\text{ad}}_{E,F},\,\mathcal{F})\neq 0?
$$
\end{Question}
C'est une question intéressante parce que l'intuition que $X^{\text{ad}}_{E,F}$ est 'une surface de Riemann compacte' prédit que de fait $\text{cd}_{p}(X^{\text{ad}}_{E,F})\leq 2.$
\\
Dans le reste de cette section je veux analyser cette question dans le cas d'un faisceau constructible de la forme $j_{!}\mathbb{F}_{p},$ où
$$
j\colon Y_{[1,\rho]}\hookrightarrow X^{\text{ad}}_{E,F}=Y_{[1,q]}/(Y_{[1,1]}\sim Y_{[q,q]})
$$ 
et $\rho\in [1,q].$
Comme d'habitude, on considère le changement de base au niveau infini:
$$
j_{\infty}\colon Y_{[1,\rho],\infty}\hookrightarrow X^{\text{ad}}_{E_{\infty},F}.
$$
Ici on note $Y_{[1,\rho],\infty}:=Y_{[1,\rho]}\times_{\Spa(E)}\Spa(\widehat{E}_{\infty})$ et $X^{\text{ad}}_{E_{\infty},F}:=X^{\text{ad}}_{E,F}\times_{\Spa(E)}\Spa(\widehat{E}_{\infty}).$ En utilisant $$\text{cd}_{p}(\mathbb{Z}_{p})=1$$ et $\text{cd}_{p}(X^{\text{ad}}_{E_{\infty},F})\leq 2,$ on voit facilement que 
$$
H^{1}(\mathbb{Z}_{p},H^{2}(X^{\text{ad}}_{E_{\infty},F},\,j_{\infty !}\mathbb{F}_{p}))\simeq H^{3}(X^{\text{ad}}_{E,F},\,j_{!}\mathbb{F}_{p}).
$$
Maintenant on analyse le groupe
$$
H^{1}(\mathbb{Z}_{p},H^{2}(X^{\text{ad}}_{E_{\infty},F},\,j_{\infty !}\mathbb{F}_{p})).
$$
On rappelle que $X^{\text{ad} \flat}_{E_{\infty},F}=(\mathbb{B}^{1,\circ,*,\text{perf}}_{F}/\varphi_{F}^{\mathbb{Z}}).$ On considère de plus la décomposition ouverte/fermée
$$
\xymatrix{
A_{[1,\rho],F}^{\text{perf}} \ar[r]^-{k} & (\mathbb{B}^{1,\circ,*,\text{perf}}_{F}/\varphi_{F}^{\mathbb{Z}}) & \ar[l]^-{i} A_{(\rho,q],F}^{\text{perf}}.
}
$$
Cette décomposition implique la suite exacte courte suivante
$$
\xymatrix{
k_{!} \mathbb{F}_{p} \ar[r] & \mathbb{F}_{p} \ar[r] & i_{*}\mathbb{F}_{p}.
}
$$
Les espaces $(X^{\text{ad}}_{E_{\infty},F})^{\flat}$ et $A_{(\rho,q],F}$ sont connexes; il en résulte 
$$H^{0}(k_{!}\mathbb{F}_{p})=H^{1}(k_{!}\mathbb{F}_{p})=0$$
en utilisant la suite exacte longue en cohomologique induite par la suite exacte courte ci-dessus. Comme $H^{2}(X^{\text{ad} \flat}_{E_{\infty},F},\,\mathbb{F}_{p})=0,$ cette suite exacte longue donne la suite exacte courte suivante
$$
\xymatrix{
0 \ar[r] & H^{1}(X^{\text{ad} \flat}_{E_{\infty},F},\,\mathbb{F}_{p}) \ar[r] & H^{1}(A^{\text{perf}}_{(\rho,q],F},\,\mathbb{F}_{p}) \ar[r] & H^{2}(k_{!}\mathbb{F}_{p}) \ar[r] & 0.
}
$$
C'est une suite exacte courte de $\mathbb{Z}_{p}$-modules discrets et en appliquant la $\mathbb{Z}_{p}$-cohomologie, on peut en déduire une suite exacte longue. Il en résulte une surjection
$$
H^{1}(\mathbb{Z}_{p},H^{1}(A^{\text{perf}}_{(\rho,q],F},\, \mathbb{F}_{p}))\twoheadrightarrow H^{1}(\mathbb{Z}_{p}, H^{2}(k_{!}\mathbb{F}_{p})).
$$
Le noyau de cette surjection s'identifie à l'image de $H^{1}(\mathbb{Z}_{p},H^{1}(X^{\text{ad} \flat}_{E_{\infty},F},\,\mathbb{F}_{p})).$ On observe que $$H^{1}(\mathbb{Z}_{p},H^{1}(X^{\text{ad} \flat}_{E_{\infty},F},\,\mathbb{F}_{p}))=H^{2}(X^{\text{ad}}_{E,F},\,\mathbb{F}_{p}).$$ Si on suppose de plus que $\mu_{p}\subset E,$ ce dernier groupe de cohomologie s'identifie à $\mathbb{F}_{p}.$
\\
Maintenant on analyse $H^{1}(\mathbb{Z}_{p},H^{1}(A^{\text{perf}}_{(\rho,q],F},\, \mathbb{F}_{p})).$ D'abord, on observe que
$$
H^{1}(\mathbb{Z}_{p},H^{1}(A^{\text{perf}}_{(\rho,q],F},\, \mathbb{F}_{p}))=H^{2}(Y_{(\rho,q]},\,\mathbb{F}_{p}).
$$
L'espace $Y_{(\rho,q]}$ admet un recouvrement de Stein
$$
Y_{(\rho,q]}=\bigcup_{n\in \mathbb{N}}Y_{[\rho_{n},q]}.
$$
Cela implique la suite exacte courte suivante
$$
\xymatrix{
0 \ar[r] & \text{R}^{1}\lim_{n} H^{1}(Y_{[\rho_{n},q]},\,\mathbb{F}_{p}) \ar[r] & H^{2}(Y_{(\rho,q]},\,\mathbb{F}_{p}) \ar[r] & \lim_{n}H^{2}(Y_{[\rho_{n},q]},\,\mathbb{F}_{p}) \ar[r] & 0.
}
$$
Sous des hypothèses supplémentaires on peut vérifier que la dernière limite s'annule:
\begin{Lemma_french}
On suppose que le corps de fonctions $E(X^{\text{alg}}_{E,F})$ est de dimension cohomologique $1$ et $\mu_{p}\subset E^{*}$. Alors on a
$$
H^{i}(Y_{I},\,\mathbb{F}_{p})=0,
$$
pour $i\geq 2$ et $I=[a,b]\subset (0,\infty)$ un intervalle compact avec $a,b\in \mathbb{Q}$, tel que $\varphi(Y_{I})\cap Y_{I}=\emptyset.$
\end{Lemma_french}
\begin{proof}
Il suffit d'expliquer l'annulation pour $i=2:$ en utilisant la présentation $Y_{I}^{\lozenge}=A_{I,F}^{\text{perf}}/\underline{\mathbb{Z}_{p}}$ on démontre
$$
H^{i}(Y_{I},\mathbb{F}_{p})=0,
$$
pour $i\geq 3.$
\\
Soit $t\in B^{\varphi=\pi},$ tel que $t$ ne s'annule pas dans $Y_{I}.$ D'après un résultat de Fargues-Fontaine (\cite[Prop. 7.9.1.]{FarguesFontaine}) l'inclusion canonique $B_{e}=(B[1/t])^{\varphi=1}\hookrightarrow B_{I}$ est d'image dense. Soit $$A_{e}=\lbrace f\in B_{e}\colon ||f||_{I}\leq 1 \rbrace.$$ On a $A_{e}[1/\pi]=B_{e}.$ D'après le résultat de comparaison de Huber (\cite[Cor. 3.2.2.]{HuberBuch}), on a
$$
H^{i}(Y_{I},\,\mathbb{F}_{p})=H^{i}(\Spec(A_{e}^{h}[1/\pi]),\,\mathbb{F}_{p}).
$$
Ici $A_{e}^{h}= \colim_{A_{e}\rightarrow A_{e}^{\prime}} A_{e}^{\prime}$ est le hensélisé de $A_{e}$ le long de $\pi,$ i.e. la colimite porte sur tous les $A_{e}\rightarrow A_{e}^{\prime}$ étale, tel que $A_{e}/\pi\simeq A_{e}^{\prime}/\pi.$ Par compatibilité de la cohomologie étale aux colimites filtrantes, on a
$$
H^{i}(\Spec(A_{e}^{h}[1/\pi]),\,\mathbb{F}_{p})=\underset{A_{e}\rightarrow A_{e}^{\prime}} \colim H^{i}(\Spec(A_{e}^{\prime}[1/\pi]),\,\mathbb{F}_{p}).
$$
On affirme que $\Br(\Spec(A_{e}^{\prime}[1/\pi]))=0.$ C'est ici que l'on doit utiliser l'hypothèse que $\text{cd}(E(X^{\text{alg}}_{E,F}))=1.$ De fait, $\Spec(A_{e}^{\prime}[1/\pi])$ est un schéma noethérien de dimension $1$ qui est régulier. Soient $\lbrace C_{i} \rbrace_{i\in I}$ ses composantes connexes. Il y en a un nombre fini et ce sont aussi les composantes irréductibles. Chaque $C_{i}$ est le spectre d'un anneau de Dedekind. Il suffit de démontrer que $\Br(C_{i})=0$ et pour cela il suffit de démontrer que $\Br(\text{Frac}(\mathcal{O}(C_{i})))=0,$ car pour un schéma régulier intègre $S,$ avec point générique $\eta_{S}$, de dimension $1$ on a une injection
$$
\Br(S)\hookrightarrow \Br(\kappa(\eta_{S})).
$$
Le morphisme induit $C_{i}\rightarrow \Spec(B_{e})$ est toujours étale. L'image $U$ est un ouvert et comme $B_{e}$ est un anneau principal l'ouvert $U$ est affine. Le morphisme $C_{i}\rightarrow U=\Spec(R)$ est étale et surjectif entre schémas affines. Il correspond ainsi à une injection sur les anneaux et il est donc dominant. On observe $\text{Frac}(B_{e})=\text{Frac}(R)$ et le fait que l'extension $\text{Frac}(\mathcal{O}(C_{i}))$ de $\text{Frac}(B_{e})$ est finie. On a donc $\Br(\text{Frac}(\mathcal{O}(C_{i})))=0,$ parce que $\text{cd}(E(X^{\text{alg}}_{E,F}))=1.$
\\
 En utilisant la suite exacte de Kummer, on déduit de l'hypothèse $\mu_{p}\subset E^{\times}$ l'isomorphisme $$H^{2}(\Spec(A_{e}^{\prime}[1/\pi]),\,\mathbb{F}_{p})\simeq \text{Pic}(\Spec(A_{e}^{\prime}[1/\pi]))_{[p]}.$$ Maintenant, on affirme que
$$
\underset{A_{e}\rightarrow A_{e}^{\prime}} \colim \text{Pic}(\Spec(A_{e}^{\prime}[1/\pi]))=\text{Pic}(B_{I})=0,
$$
ce qui finit la preuve car $B_{I}$ est un anneau principal.
\\
D'abord, on utilise la compatibilité de la cohomologie étale aux colimites filtrantes
$$
\underset{A_{e}\rightarrow A_{e}^{\prime}} \colim \text{Pic}(\Spec(A_{e}^{\prime}[1/\pi]))=H^{1}_{\text{ét}}(\Spec(A_{e}^{h}[1/\pi]),\,\mathbb{G}_{m}).
$$
On observe que $A_{e}^{h}$ et $\widehat{A}_{e}$ sont sans $\pi$-torsion. On peut donc appliquer le résultat de Bouthier-Česnavičius  \cite[Thm. 2.3.3. (c)]{BouthierCesnavicius} à $G=\mathbb{G}_{m}$ et au morphisme 
$$
(A_{e}^{h},\pi)\rightarrow (\widehat{A}_{e},\pi)
$$
et en déduire
$$
\text{Pic}(A_{e}^{h}[1/\pi])\simeq \text{Pic}(\widehat{A}_{e}[1/\pi]).
$$
Comme $B_{e}\hookrightarrow B_{I}$ est d'image dense, on a $\widehat{A}_{e}[1/\pi]=B_{I}.$
\end{proof}
La question finale est la suivante
\begin{Question}
Dans la situation ci-dessus: A-t-on
$$
\text{R}^{1}\lim_{n} H^{1}(Y_{[\rho_{n},q]},\,\mathbb{F}_{p})=0?
$$
\end{Question}
Il est équivalent d'essayer de comprendre 
$$
\text{R}^{1}\lim_{n}H^{0}(\mathbb{Z}_{p},H^{1}(A_{[\rho_{n},q],F},\mathbb{F}_{p})).
$$
Si on suppose que $\mu_{p}\subset E^{*}$ et que $\text{cd}(E(X^{\alg}_{E,F}))\leq 1,$ la question suivante est équivalente:
\begin{Question}
A-t-on
$$
H^{2}(Y_{(\rho,q]},\,\mu_{p})\neq 0?
$$
\end{Question}
Par l'analogie entre $Y_{E,F}$ et la boule ouverte unité épointée $\mathbb{B}^{\circ,1, *}_{C},$ où $C$ est un corps non archimedien algébriquement clos, il est probable que cette question a une réponse positive si le corps $F$ n'est pas sphériquement clos. Peut on par exemple adapter les résultats de la section \cite[Prop. A.2.]{Colmezetccourbes}?
\\
La conclusion de cette discussion est ainsi la suivante: si $F$ n'est pas sphériquement clos je peux imaginer que 
$$
H^{3}(X^{\text{ad}}_{E,F},j_{!}(\mathbb{F}_{p}))\neq 0.
$$
\newpage
\bibliography{/Users/sebastianbartling/Documents/Bibtex/mybib}{}
\bibliographystyle{alpha}
\end{document}